\documentclass[pdflatex,sn-mathphys-num]{sn-jnl}
\usepackage{graphicx}
\usepackage{amsmath,amssymb,amsfonts}
\usepackage{amsthm}
\usepackage{mathtools}
\usepackage{bm}
\usepackage{booktabs}
\usepackage{enumitem}
\usepackage[title]{appendix}

\theoremstyle{thmstyleone}
\newtheorem{theorem}{Theorem}[section]
\newtheorem{proposition}[theorem]{Proposition}
\newtheorem{lemma}[theorem]{Lemma}
\newtheorem{corollary}[theorem]{Corollary}
\theoremstyle{thmstyletwo}
\newtheorem{remark}[theorem]{Remark}
\theoremstyle{thmstylethree}

\numberwithin{equation}{section}
\newcommand{\R}{\mathbb R}
\newcommand{\Pp}{\mathcal P}
\newcommand{\tri}{\triangle}
\newcommand{\Dpts}{\mathcal D}
\newcommand{\Ncal}{\mathcal N}
\newcommand{\RT}{\mathcal{RT}}
\newcommand{\dd}{\,\mathrm d}
\newcommand{\curl}{\operatorname{curl}}
\newcommand{\divv}{\operatorname{div}}
\newcommand{\rot}{\operatorname{rot}}
\newcommand{\range}{\operatorname{range}}
\newcommand{\rank}{\operatorname{rank}}
\newcommand{\diag}{\operatorname{diag}}

\newcommand{\dist}{\operatorname{dist}}
\newcommand{\norm}[1]{\lVert #1\rVert}
\newcommand{\abs}[1]{\lvert #1\rvert}
\newcommand{\eps}{\varepsilon}
\newcommand{\wideT}{\widehat T}

\begin{document}

\title[Bernstein Constraint Complexes]{Bernstein Constraint Complexes for Multivariate Splines on Triangulated Surfaces}

\author*[1]{\fnm{Shelvean} \sur{Kapita}}\email{kapita@tamu.edu}

\affil*[1]{\orgdiv{Department of Mathematics}, \orgname{Texas A\&M University}, \orgaddress{\street{155 Ireland Street}, \city{College Station}, \postcode{77843}, \state{TX}, \country{USA}}}

\abstract{We introduce Bernstein constraint complexes, a representation of finite element differential complexes in which every triangle of a triangulation keeps its own Bernstein--B\'ezier coefficients and global continuity is imposed only through smoothness functionals attached to edges and vertices. Continuity of scalar splines, tangential continuity of spline vector fields, normal continuity of spline vector fields, and higher smoothness differ only in the edge functionals. No global basis is constructed. The structural result is a coefficient-level commuting identity: on every edge the smoothness functionals of a Bernstein derivative are explicit combinations of the smoothness functionals of its argument, through univariate Bernstein difference matrices. This gives a local proof that the kernels of the smoothness matrices form a subcomplex, for the de Rham traces and for componentwise $C^r$ spline profiles, and a computable row-space test for candidate smooth profiles. We realize the lowest-order Powell--Sabin exact sequence in this form and confirm its exactness by ranks. The constrained Galerkin problem is solved in the broken coefficients with exact smoothness equations; we prove equivalence with the conforming method, treat semidefinite element operators through a pseudoinverse interface system, and give a singular pencil whose finite spectrum is the constrained Maxwell spectrum. Under the covariant and contravariant surface Piola maps the reference functionals of the de Rham sequence transfer unchanged to conforming curved triangulations. Experiments compare the null-space method, the augmented Lagrangian iteration of Awanou, Lai and Wenston, and direct solution of the saddle system on the same problems, and cover planar source and eigenvalue problems, a $C^1$ biharmonic solve, the exact sphere, and an embedded hyperboloid.}

\keywords{multivariate splines, Bernstein--B\'ezier form, smoothness conditions, finite element exterior calculus, spline vector fields, Maxwell eigenvalues, surface finite elements}

\pacs[MSC Classification]{65N30, 41A15, 65N25, 65N35, 58A14}

\maketitle

\section{Introduction}\label{sec:intro}

Polynomial splines on triangulations allow the degree and the smoothness across edges to be prescribed independently, and every smoothness condition can be written explicitly as a linear relation among Bernstein--B\'ezier coefficients \cite{LaiSchumaker2007}. For scalar elliptic, plate, obstacle and surface problems this has produced a family of solvers that work directly with the B-coefficients \cite{AwanouLaiWenston2006,Lai2025}. For vector partial differential equations the correct global space is often not a componentwise scalar spline space. Maxwell equations require tangential continuity, mixed diffusion and flux formulations require normal continuity, and finite element exterior calculus organizes these spaces into differential complexes whose discrete operators inherit the identities of the de Rham complex \cite{ArnoldFalkWinther2006,ArnoldFalkWinther2010}.

The local spaces are classical. N\'ed\'elec, Raviart--Thomas and BDM families provide the $H(\curl)$ and $H(\divv)$ elements \cite{Nedelec1980,Nedelec1986,RaviartThomas1977,BrezziDouglasMarini1985}, Bernstein--B\'ezier bases for Raviart--Thomas elements of arbitrary order were constructed by Ainsworth, Andriamaro and Davydov \cite{AinsworthAndriamaroDavydov2015}, and Ainsworth and Fu built Bernstein--B\'ezier bases for the full tetrahedral de Rham sequence \cite{AinsworthFu2018}. Nodal realizations of high-order edge and face spaces treat tangential and normal continuity as directional continuity of vector polynomial data \cite{ChenChenHuangWei2024}, and smooth complexes on Powell--Sabin, Alfeld, Clough--Tocher and Worsey--Farin refinements come with commuting projections and exactness results adapted to the added smoothness \cite{ChristiansenHu2018,GuzmanLischkeNeilan2020,GuzmanLischkeNeilan2022}. Partially discontinuous nodal elements give a further route between continuous Lagrange fields and minimally conforming edge and face fields \cite{HuHuZhang2022}.

On the spline side, Lai and Schumaker developed the Bernstein--B\'ezier theory of spline spaces on triangulations, including the explicit smoothness conditions and the theory of determining sets \cite{LaiSchumaker2007}. Alfeld and Sorokina studied the images and kernels of the gradient, curl, divergence and Laplace operators on bivariate spline spaces and spline vector fields \cite{AlfeldSorokina2016}, and Sorokina developed Bernstein--B\'ezier techniques for the divergence of spline vector fields in higher dimension \cite{Sorokina2018}. Closest to the present work is the constrained B-form method of Awanou, Lai and Wenston \cite{AwanouLaiWenston2006}. They start from the discontinuous spline space, write smoothness and boundary data as linear equations on the B-coefficients, impose those equations by Lagrange multipliers, and solve the saddle system by an augmented Lagrangian matrix iteration whose rate is analyzed in \cite{AwanouLai2005}. The same constrained-coefficient framework underlies the spline collocation method of Lai and Lee \cite{LaiLee2022,LaiLee2023}, in which the strong form of the equation is imposed at collocation points inside the triangles or tetrahedra, the smoothness conditions are imposed as constraints on the B-coefficients, and the resulting overdetermined constrained system is solved by least squares; there the smoothness matrix plays exactly the role it plays here, and the constraint complexes of this paper apply to it unchanged. The spaces used in this paper are classical and the constrained B-form system is theirs; what is new is the complex formed by the smoothness matrices, the explicit factors $B_k$ through which differentiation acts on them, the row-space test, the constrained Maxwell pencil, and the invariance of the reference functionals under the surface Piola maps.

This paper begins one level above that construction. The question is whether the smoothness matrices themselves can be organized as the consecutive spaces of a differential complex, with scalar continuity, tangential continuity, normal continuity and higher-order smoothness written in the same edge language, differentiation acting on the broken coefficients, and compatibility read off the smoothness functionals alone. The viewpoint is this: keep the B-coefficients broken by triangle, encode continuity by smoothness functionals attached to edges, and let the differential operators act on the broken coefficients. Each global space is then a kernel
\[
   V_h^k=\ker C_k
\]
of a sparse smoothness matrix $C_k$, and a basis of that kernel is never required. We call the resulting sequence of kernels a Bernstein constraint complex.

The common algebraic object is the constrained Galerkin system. In one slot of the complex let $c_k$ be the broken B-coefficient vector, let $A_k$ and $f_k$ be the broken Galerkin matrix and load vector, and let $C_kc_k=g_k$ collect the smoothness and boundary equations, whose rows are allowed to be redundant. The stationarity equations are
\begin{equation}\label{eq:intro-kkt}
   A_kc_k+C_k^T\lambda_k=f_k,\qquad C_kc_k=g_k .
\end{equation}
Neither the formulation nor the uniqueness of $c_k$ requires the broken matrix $A_k$ to be invertible; redundant rows of the smoothness matrix appear as nonuniqueness of the multiplier. There are three ways to solve \eqref{eq:intro-kkt} on the same space. One may build a sparse null-space matrix $Z_k$ with $\range Z_k=\ker C_k$ and solve the reduced system, which is the null-space method of \cite[Sec.~6]{BenziGolubLiesen2005}; in spline terms $Z_k$ extends the coefficients on a determining set to all coefficients \cite[Chap.~5]{LaiSchumaker2007}. One may keep the broken coefficients and run the augmented Lagrangian iteration of Awanou, Lai and Wenston, whose eliminated matrix is $A_k+\eps^{-1}C_k^TC_k$ \cite[Algorithm~5]{AwanouLaiWenston2006}. Or one may keep the broken coefficients and solve the saddle system \eqref{eq:intro-kkt} directly. These are three coordinate systems for one problem, and Section~\ref{sec:numerics} compares them on identical data before the remaining experiments settle on one.

The central observation is that differential compatibility can be read off the smoothness functionals. Let $C_{0,e}$ be the jump of the degree-$(d+1)$ scalar B-coefficients across an edge $e$ and let $C_{1,e}^{\curl}$ be the jump of the degree-$d$ B-coefficients of the tangential component of a vector field. If $D_0^{\rm br}$ is the broken Bernstein gradient, then
\begin{equation}\label{eq:intro-commute}
   C_{1,e}^{\curl}D_0^{\rm br}=|e|^{-1}\mathsf U_{d+1}\,C_{0,e},
\end{equation}
where $\mathsf U_{d+1}$ is the univariate Bernstein difference matrix from degree $d+1$ to degree $d$ on the edge. Hence $C_0c=0$ implies $C_1^{\curl}D_0^{\rm br}c=0$ without any global basis. The rotated identity gives the normally continuous complex. More generally, for smoothness matrices $J_k$ of any order, a candidate complex is compatible exactly when $J_{k+1}D_k=B_kJ_k$ for edge-local matrices $B_k$, and for componentwise $C^r$ profiles we compute $B_k$ explicitly.

The second contribution concerns curved triangulations. For a surface element map $F_T:\wideT\to T$, the covariant surface Piola map preserves tangential line traces and the contravariant map preserves co-normal fluxes. The reference smoothness functionals for $H^1$, $H(\curl)$ and $H(\divv)$ are therefore inherited without change on a conforming curved triangulation, and curvature enters only the element mass and stiffness matrices. Higher-order smoothness across curved patches depends on derivatives of the geometry map and is treated only on planar meshes.

Maxwell eigenvalues are the most demanding test used here, since spurious modes reveal defects in the gradient--curl structure that coercive source problems hide \cite{BoffiGuzmanNeilan2023}. We solve the eigenproblem in the broken coefficients through a generalized pencil with a singular mass block. Its finite eigenvalues are exactly the eigenvalues on $\ker C_1^{\curl}$, and the multiplier block belongs to the infinite part of the pencil, so the physical spectrum is targeted by shift-invert without a penalty parameter and without a conforming basis.

\paragraph{Contributions.}
\begin{enumerate}[leftmargin=2em]
\item Scalar, tangential, normal and $L^2$ continuity are organized as consecutive kernels of one edge-local smoothness complex, for full polynomial and for trimmed N\'ed\'elec and Raviart--Thomas families (Section~\ref{sec:constraints}).
\item The edge commuting identity \eqref{eq:intro-commute} and its consequences: a local proof of the subcomplex property and exactness on simply connected domains (Section~\ref{sec:commuting}).
\item A row-space criterion for arbitrary smoothness profiles, an explicit factorization for componentwise $C^r$ profiles, and the realization of the lowest-order Powell--Sabin exact sequence with the classical $C^1$ quadratic spline space as its first term (Section~\ref{sec:smooth}).
\item Equivalence of the constrained system with the conforming Galerkin method, the null-space and augmented Lagrangian realizations of the same equations, an interface system valid for semidefinite element matrices, and a constrained Maxwell pencil (Section~\ref{sec:variational}).
\item Transfer of the reference smoothness functionals to conforming curved triangulations through the surface Piola maps (Section~\ref{sec:surface}).
\item Numerical experiments comparing the three realizations, and validating planar source and eigenvalue problems, the Powell--Sabin complex and biharmonic problem, and curved problems on the exact sphere and an embedded hyperboloid (Section~\ref{sec:numerics}).
\end{enumerate}

\paragraph{Scope.}
The local spaces used here, the N\'ed\'elec, BDM, Raviart--Thomas and Powell--Sabin spaces, are classical, and the constrained B-form system with Lagrange multipliers is that of \cite{AwanouLaiWenston2006}. What is new is the organization of scalar, vector and higher-order smoothness conditions as one edge-local complex, the explicit commuting factors, the row-space criterion and its use for smooth profiles, the constrained Maxwell pencil, and the separation of de Rham smoothness functionals from curved-surface metric assembly. The subcomplex results are algebraic; exactness of smooth sequences and bounded commuting projections are properties of the chosen spaces and are taken from the literature where needed.

\paragraph{Conventions.}
For the de Rham sequences we index by the degree $d$ of the vector field, as is customary for N\'ed\'elec spaces, so the scalar potentials have degree $d+1$ and the terminal space has degree $d-1$. For scalar smooth profiles in Section~\ref{sec:smooth} we index by the scalar degree, as in \cite{AlfeldSorokina2016}. American spelling is used throughout.

Section~\ref{sec:bb} recalls splines in B-form. Section~\ref{sec:constraints} defines the constraint spaces. Section~\ref{sec:commuting} proves the commuting identities and exactness. Section~\ref{sec:smooth} treats higher smoothness and the Powell--Sabin complex. Section~\ref{sec:variational} gives the constrained Galerkin and eigenvalue formulations. Section~\ref{sec:surface} transfers the construction to curved surfaces. Section~\ref{sec:analysis} records approximation and spectral consequences, and Section~\ref{sec:numerics} reports the experiments.

\section{Splines in B-form on triangulations}\label{sec:bb}

\subsection{Domain points and B-coefficients}

Let $\tri$ be a regular triangulation of a polygonal domain $\Omega\subset\R^2$, with $N_V$ vertices, $N_E$ edges and $N_T$ triangles. For a triangle $T=\langle v_1,v_2,v_3\rangle$ with barycentric coordinates $\lambda_1,\lambda_2,\lambda_3$, the Bernstein polynomials of degree $d$ are
\begin{equation}\label{eq:Bernstein}
   B^d_{ijk}=\frac{d!}{i!\,j!\,k!}\lambda_1^i\lambda_2^j\lambda_3^k,\qquad i+j+k=d,
\end{equation}
and every $p\in\Pp_d(T)$ has a unique B-form $p=\sum_{i+j+k=d}c_{ijk}B^d_{ijk}$. Following \cite{LaiSchumaker2007}, the B-coefficient $c_{ijk}$ is associated with the domain point $\xi_{ijk}=(iv_1+jv_2+kv_3)/d$, and $\Dpts_{d,T}$ denotes the set of these $\binom{d+2}{2}$ domain points. We write $c_T$ for the vector of B-coefficients of $p$ on $T$, ordered by domain points. For a vector polynomial $p=(p^x,p^y)\in[\Pp_d(T)]^2$ the coefficient vector consists of two copies, $c_T=(c^x_T,c^y_T)$.

The spline spaces used below are
\begin{equation}\label{eq:Srd}
   S^r_d(\tri)=\{s\in C^r(\Omega):\ s|_T\in\Pp_d(T)\ \ \forall T\in\tri\},\qquad r\ge0,
\end{equation}
and the discontinuous space $S^{-1}_d(\tri)$ of piecewise polynomials of degree $d$ with no continuity across edges. A spline in $S^{-1}_d(\tri)$ is determined by the concatenation $c\in\R^{N}$, $N=N_T\binom{d+2}{2}$, of its B-coefficient vectors on the triangles of $\tri$. No coefficient is shared between triangles. Spline vector fields are pairs of splines, $[S^{-1}_d(\tri)]^2$, with coefficient vectors of length $2N$; this is the setting of \cite{AlfeldSorokina2016}.

\subsection{Directional derivatives and edge restrictions}

For a direction $z\in\R^2$ let $D_z$ denote the directional derivative. Writing $a_m=D_z\lambda_m$ for the directional coordinates of $z$ relative to $T$,
\begin{equation}\label{eq:bern-deriv}
   D_zB^d_{ijk}=d\bigl(a_1B^{d-1}_{i-1,j,k}+a_2B^{d-1}_{i,j-1,k}+a_3B^{d-1}_{i,j,k-1}\bigr),
\end{equation}
with the convention that a Bernstein polynomial with a negative index is zero \cite[Thm.~2.8]{LaiSchumaker2007}. Hence $D_z$ maps the B-coefficients of degree $d$ to those of degree $d-1$ by a sparse matrix in which each row has at most three nonzero entries. We write $G_T$ for the matrix of the gradient on $T$, from $\Pp_{d+1}(T)$ to $[\Pp_d(T)]^2$, and $D_0^{\rm br}=\diag_TG_T$ for the broken gradient on $S^{-1}_{d+1}(\tri)$.

Let $e=\langle v_1,v_2\rangle$ be the edge of $T$ opposite $v_3$. Restriction to $e$ sets $\lambda_3=0$, so
\begin{equation}\label{eq:edge-restriction}
   p|_e=\sum_{i+j=d}c_{ij0}B^d_{ij0}|_e,
\end{equation}
and the B-coefficients of $p|_e$ are the B-coefficients of $p$ at the $d+1$ domain points on $e$ \cite[Lemma~2.6]{LaiSchumaker2007}. Ordering them from $v_1$ to $v_2$ identifies $p|_e$ with a univariate polynomial in the Bernstein basis $b^d_j(t)=\binom dj t^j(1-t)^{d-j}$, $0\le t\le1$. For a univariate coefficient vector $r=(r_0,\dots,r_d)$,
\begin{equation}\label{eq:univariate-derivative}
   \frac{\dd}{\dd t}\sum_{j=0}^dr_jb^d_j(t)=d\sum_{j=0}^{d-1}(r_{j+1}-r_j)b^{d-1}_j(t),
\end{equation}
and we denote the corresponding univariate difference matrix by
\begin{equation}\label{eq:Delta}
   \mathsf U_d=d
   \begin{bmatrix}
   -1&1&&\\
   &-1&1&\\
   &&\ddots&\ddots\\
   &&&-1&1
   \end{bmatrix}\in\R^{d\times(d+1)} .
\end{equation}
With the unit tangent $t_e$ pointing from $v_1$ to $v_2$, the tangential derivative $D_{t_e}p|_e$ has B-coefficients $|e|^{-1}\mathsf U_dr$, where $r$ is the coefficient vector of $p|_e$.

\subsection{Smoothness conditions across an edge}

Let $T=\langle v_1,v_2,v_3\rangle$ and $\widetilde T=\langle v_4,v_3,v_2\rangle$ share the edge $e=\langle v_2,v_3\rangle$, and let $s\in S^{-1}_d(\tri)$ have B-coefficients $c_{ijk}$ on $T$ and $\widetilde c_{ijk}$ on $\widetilde T$, each indexed by the vertex order of its own triangle. By \cite[Thm.~2.28]{LaiSchumaker2007}, $s$ is $C^r$ across $e$ if and only if
\begin{equation}\label{eq:LS-smoothness}
   \widetilde c_{njk}=\sum_{\nu+\mu+\kappa=n}c_{\nu,\,k+\mu,\,j+\kappa}\,B^n_{\nu\mu\kappa}(v_4),
   \qquad j+k=d-n,\quad n=0,\dots,r,
\end{equation}
where $B^n_{\nu\mu\kappa}(v_4)$ are the Bernstein polynomials of degree $n$ relative to $T$ evaluated at $v_4$. Each condition involves the coefficients in the rows $n$ and below parallel to $e$ on both triangles, and the difference of the two sides of \eqref{eq:LS-smoothness} is a linear functional of the broken coefficient vector. We call these the smoothness functionals of order $r$ across $e$ and write $J^{(r)}_{0,e}$ for the matrix whose rows are these functionals, so that $s$ is $C^r$ across $e$ if and only if $J^{(r)}_{0,e}c=0$. For $r=0$ the functionals are the differences of the coefficients at the shared domain points on $e$, and we write $C_{0,e}=J^{(0)}_{0,e}$. Stacking the blocks over all interior edges gives the smoothness matrix $J^{(r)}_0$ of the space, and
\begin{equation}\label{eq:Srd-kernel}
   S^r_d(\tri)=\ker J^{(r)}_0\subset S^{-1}_d(\tri).
\end{equation}
The rows of $J^{(r)}_0$ are in general linearly dependent, since the conditions around a vertex are related; this is the source of the dimension theory in \cite[Chap.~9]{LaiSchumaker2007} and it is allowed throughout this paper.

An equivalent description will be convenient in Section~\ref{sec:smooth}. Fix a unit vector $n_e$ transverse to $e$. Then $s$ is $C^r$ across $e$ if and only if the univariate polynomials $(D_{n_e}^ms)|_e$, $m=0,\dots,r$, agree from the two sides; the row space of the corresponding functionals equals the row space of \eqref{eq:LS-smoothness}, since both characterize the same kernel. We write $\gamma_m(s)\in\R^{d+1-m}$ for the B-coefficient vector of $(D_{n_e}^ms|_T)|_e$, which is a sparse linear function of the first $m+1$ rows of coefficients parallel to $e$ by \eqref{eq:bern-deriv} and \eqref{eq:edge-restriction}.

\subsection{Local polynomial de Rham sequences}

For $d\ge1$ the full polynomial sequence on a triangle is
\begin{equation}\label{eq:local-full}
   \R\hookrightarrow\Pp_{d+1}(T)\xrightarrow{\nabla}[\Pp_d(T)]^2\xrightarrow{\curl}\Pp_{d-1}(T)\to0,
   \qquad \curl v=\partial_xv^y-\partial_yv^x .
\end{equation}
It is exact: $\ker\curl$ consists of gradients of polynomials of degree at most $d+1$ because $T$ is simply connected, and $\curl$ is onto $\Pp_{d-1}(T)$ since for $q\in\Pp_{d-1}(T)$ the field $v=(0,\int_0^xq(\sigma,y)\dd\sigma)$ lies in $[\Pp_d(T)]^2$ and has $\curl v=q$. With the rotation $R(a,b)=(-b,a)$, the sequence $\Pp_{d+1}\xrightarrow{R\nabla}[\Pp_d]^2\xrightarrow{\divv}\Pp_{d-1}$ is exact as well. After tangential assembly the middle space of \eqref{eq:local-full} is the second N\'ed\'elec family, and after normal assembly of the rotated sequence it is the BDM family.

The construction below applies to any choice of local space. The first N\'ed\'elec space $\Ncal_d(T)=[\Pp_d(T)]^2\oplus x^\perp\widetilde{\Pp}_d(T)$, of order $d+1$ in the usual numbering and with tangential traces of degree $d$, with $x^\perp=(-y,x)$ and $\widetilde{\Pp}_d$ the homogeneous polynomials of degree $d$, and its rotation, the Raviart--Thomas space $\RT_d(T)=[\Pp_d(T)]^2\oplus x\widetilde{\Pp}_d(T)$, have edge traces that are univariate polynomials of degree $d$ and can be written in the univariate Bernstein basis. The local sequences $\Pp_{d+1}\xrightarrow{\nabla}\Ncal_d\xrightarrow{\curl}\Pp_d$ and $\Pp_{d+1}\xrightarrow{R\nabla}\RT_d\xrightarrow{\divv}\Pp_d$ are exact, and the smoothness functionals below apply after replacing the local coefficient-to-edge map by the one for the chosen basis. The experiments use the full family because it keeps the algebra transparent.

\section{Constraint spaces on a triangulation}\label{sec:constraints}

\subsection{Tangentially and normally continuous spline vector fields}

Each interior edge $e=T^+\cap T^-$ receives a fixed orientation with unit tangent $t_e$ and unit normal $n_e=Rt_e$. For $v\in[S^{-1}_d(\tri)]^2$ with coefficients $(c^x_T,c^y_T)$ on $T$, the tangential component $v\cdot t_e$ restricted to $e$ from the side of $T$ is a univariate polynomial of degree $d$ whose B-coefficients are $(t_e)_xc^x_\xi+(t_e)_yc^y_\xi$ at the domain points $\xi$ on $e$, by \eqref{eq:edge-restriction}. The normal component is described in the same way with $n_e$. The tangential and normal jump functionals across $e$ are the differences of these coefficient vectors from the two sides, and we write
\begin{equation}\label{eq:C1e}
   C_{1,e}^{\curl}c=\bigl(\text{coefficients of }v|_{T^+}\cdot t_e\bigr)-\bigl(\text{coefficients of }v|_{T^-}\cdot t_e\bigr),
\end{equation}
and $C_{1,e}^{\divv}$ for the normal analogue. Each is a $(d+1)\times2N$ sparse matrix with $2(d+1)$ nonzero entries per row. Boundary conditions are represented by one-sided versions of the same functionals on boundary edges. Figure~\ref{fig:constraints} shows the three kinds of edge functional on a pair of triangles.

\begin{figure}[htbp]
\centering
\includegraphics[width=.8\textwidth]{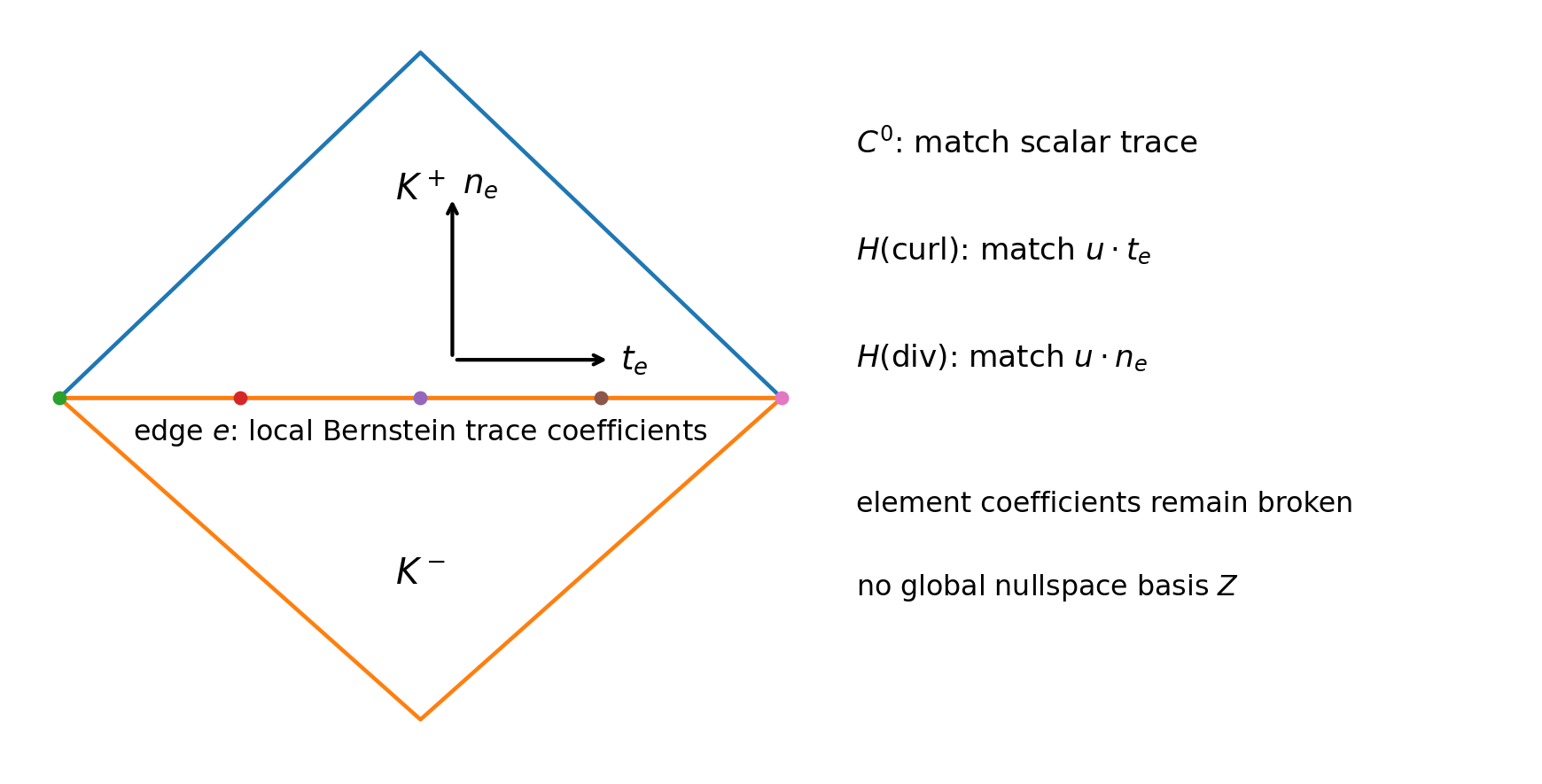}
\caption{Smoothness functionals on a pair of triangles. The B-coefficients remain broken. Continuity of a scalar spline matches the coefficients at the shared domain points; tangential continuity of a spline vector field matches the coefficients of the tangential component on the edge; normal continuity matches those of the normal component. No coefficient is identified across triangles.}
\label{fig:constraints}
\end{figure}

Stacking the interior-edge blocks gives the smoothness matrices $C_0$, $C_1^{\curl}$ and $C_1^{\divv}$, and we define
\begin{equation}\label{eq:spaces}
   V_h^0=\ker C_0,\qquad V_{h,\curl}^1=\ker C_1^{\curl},\qquad V_{h,\divv}^1=\ker C_1^{\divv},\qquad V_h^2=S^{-1}_{d-1}(\tri),
\end{equation}
with $C_0$ acting on $S^{-1}_{d+1}(\tri)$ and the two vector matrices on $[S^{-1}_d(\tri)]^2$. Homogeneous essential boundary conditions are imposed by appending the one-sided boundary blocks. The stacked matrices are a notation; an implementation stores the edge blocks and assembles from edge--triangle incidences.

\begin{theorem}[identification of the kernels]\label{thm:conformity}
The spaces \eqref{eq:spaces} satisfy
\begin{align*}
 V_h^0&=S^0_{d+1}(\tri)=\{u\in H^1(\Omega):u|_T\in\Pp_{d+1}(T)\},\\
 V_{h,\curl}^1&=\{v\in H(\curl;\Omega):v|_T\in[\Pp_d(T)]^2\},\\
 V_{h,\divv}^1&=\{v\in H(\divv;\Omega):v|_T\in[\Pp_d(T)]^2\},\\
 V_h^2&=\{q\in L^2(\Omega):q|_T\in\Pp_{d-1}(T)\}.
\end{align*}
\end{theorem}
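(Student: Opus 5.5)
The plan is to reduce each identification to the classical trace characterizations of piecewise smooth conforming spaces, and then to use the edge-restriction property \eqref{eq:edge-restriction} to match traces with the smoothness functionals. The standard lemma needed is the following. A piecewise polynomial $u$ lies in $H^1(\Omega)$ if and only if its traces from both sides agree on every interior edge. A piecewise polynomial vector field lies in $H(\curl;\Omega)$, respectively $H(\divv;\Omega)$, if and only if its tangential, respectively normal, traces agree on every interior edge. I would prove this by elementwise integration by parts against $\phi\in C_c^\infty(\Omega)$. For the curl case,
\[
\int_\Omega v\cdot\nabla^\perp\phi\dd x=\sum_T\int_T(\curl v)\,\phi\dd x+\sum_{e}\int_e [v\cdot t_e]\,\phi\dd s ,
\]
with a suitable sign convention for the rotated gradient. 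The distributional curl therefore equals the broken curl, which is in $L^2$, exactly when every edge integral vanishes for all $\phi$. Since the jump is a polynomial on $e$, this happens exactly when the jump vanishes identically. The $H^1$ and $H(\divv)$ cases use the same computation with $\nabla$ and $\divv$.

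The key steps, in order, are as follows.

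First, $V_h^2$ needs no argument: every piecewise polynomial is in $L^2$, and $S^{-1}_{d-1}(\tri)$ is by definition the full set of piecewise polynomials of degree $d-1$.

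Second, for $V_h^0$, take an interior edge $e=T^+\cap T^-$. By \eqref{eq:edge-restriction} the univariate B-coefficients of $u|_{T^\pm}$ restricted to $e$ are exactly the coefficients at the domain points on $e$. Both sides are expressed in the same univariate Bernstein basis once the edge is ordered consistently. Uniqueness of the univariate B-form then shows that $C_{0,e}c=0$ holds if and only if the two traces coincide. Combined with the lemma, this gives $\ker C_0=S^0_{d+1}(\tri)=\{u\in H^1(\Omega):u|_T\in\Pp_{d+1}(T)\}$; the middle equality is the usual fact that continuous piecewise polynomials are $H^1$.

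Third, for the vector cases, restriction to $e$ is linear and $t_e$ is constant on $e$. Hence $(v|_{T^\pm}\cdot t_e)|_e$ has B-coefficients $(t_e)_xc^x_\xi+(t_e)_yc^y_\xi$ at the edge domain points. By \eqref{eq:C1e}, $C_{1,e}^{\curl}c$ is therefore the coefficient vector of the tangential jump, and it vanishes if and only if the jump is zero. The lemma then identifies $\ker C_1^{\curl}$. The normal case is identical with $n_e$ in place of $t_e$.

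I expect the only delicate point to be bookkeeping in the second and third steps. The two triangles index the shared edge points in opposite vertex orders, so the functionals must pair the domain point $\xi$ from $T^+$ with the same geometric point from $T^-$. I would handle this by defining the jump pointwise at the geometric domain points $\xi\in e$, which are the same set for both triangles because both have the same degree. In the vector case I would also use a single fixed $t_e$ for both sides, so that no sign flip enters.

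The integration-by-parts lemma is standard, and I would cite it or give it briefly. One remark: the statement says nothing about boundary conditions, so only interior edges enter. Appending the one-sided boundary blocks would instead yield the subspaces with vanishing trace, tangential trace or normal trace, by the same argument.
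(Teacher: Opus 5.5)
Your proposal is correct and follows the same route as the paper. Both use the classical trace characterization of $H^1$, $H(\curl)$ and $H(\divv)$ conformity for piecewise polynomials, together with \eqref{eq:edge-restriction} and uniqueness of the univariate B-form, to identify vanishing trace jumps with $C_0c=0$, $C_{1,e}^{\curl}c=0$ and $C_{1,e}^{\divv}c=0$; your integration-by-parts sketch and the explicit handling of edge orientation spell out what the paper takes as standard.
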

\begin{proof}
A piecewise polynomial is in $H^1(\Omega)$ if and only if its two restrictions agree on every interior edge, and by \eqref{eq:edge-restriction} and the linear independence of the univariate Bernstein basis this is equality of the coefficients at the shared domain points, that is $C_0c=0$. A piecewise smooth vector field is in $H(\curl;\Omega)$ if and only if its tangential component is single valued across every interior edge; the tangential component is a polynomial of degree $d$ on $e$ whose coefficients are given after \eqref{eq:C1e}, so single-valuedness is $C_{1,e}^{\curl}c=0$ for every $e$. The normal statement is identical, and $L^2$ imposes no condition across edges.
\end{proof}

The theorem identifies classical spaces and creates no new element. The point is the representation: the conforming space is reached through the smoothness matrix instead of through a global basis, and the same storage holds scalar $C^r$ conditions, smooth macro-element complexes, and changing geometry maps.

\subsection{Dimension from the rank}

The dimension of each space is the dimension of the broken space minus the rank of its smoothness matrix, whether or not the rows are independent. For the full degree-$d$ tangentially continuous space every interior edge contributes $d+1$ functionals and $[\Pp_d(T)]^2$ has dimension $(d+1)(d+2)$, so with $d+1$ shared coefficients per edge and $(d+1)(d-1)$ interior ones per triangle,
\begin{equation}\label{eq:dimV1}
   \dim V_{h,\curl}^1=(d+1)N_E+(d^2-1)N_T,
\end{equation}
before boundary conditions. The count is obtained from the rank of the sparse smoothness matrix and needs no determining set. The same computation for the first N\'ed\'elec space gives $\dim\Ncal_d(T)=(d+1)(d+3)$ and $(d+1)N_E+d(d+1)N_T$ after tangential assembly.

\section{Commuting identities and exactness}\label{sec:commuting}

\subsection{The edge identity}

\begin{lemma}[edge commuting identity]\label{lem:edge-commute}
Let $e$ be an edge of $T$ with unit tangent $t_e$, and let $u\in\Pp_{d+1}(T)$ have coefficient vector $c_T$. Let $r(u)\in\R^{d+2}$ be the coefficient vector of $u|_e$ and $r_t(\nabla u)\in\R^{d+1}$ the coefficient vector of $(\nabla u\cdot t_e)|_e$. Then
\begin{equation}\label{eq:localtracegrad}
   r_t(\nabla u)=|e|^{-1}\mathsf U_{d+1}\,r(u),
\end{equation}
that is, the coefficients of the tangential component of the gradient on $e$ are the Bernstein differences of the coefficients of $u$ on $e$.
\end{lemma}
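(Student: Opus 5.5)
The identity is a statement about univariate polynomials on $e$. The plan is to show that restriction to the edge commutes with the tangential derivative, and then read off coefficients with the univariate difference formula \eqref{eq:univariate-derivative}.

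First, parametrize $e$ affinely by $t\in[0,1]$ via $x(t)=(1-t)v_1+tv_2$, where $v_1,v_2$ are the endpoints of $e$ in the orientation of $t_e$. Then $x'(t)=v_2-v_1=|e|\,t_e$.

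For $u\in\Pp_{d+1}(T)$, the chain rule gives
\[
\frac{\dd}{\dd t}\,u(x(t))=\nabla u(x(t))\cdot x'(t)=|e|\,(\nabla u\cdot t_e)(x(t)).
\]
Hence $(\nabla u\cdot t_e)|_e=|e|^{-1}\,\frac{\dd}{\dd t}(u|_e)$ as polynomials in $t$. The only thing used here is that $u$ is a polynomial on all of $T$, so the derivative of its restriction along $e$ is the tangential component of its gradient on $e$.

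Next, by \eqref{eq:edge-restriction} we have $u|_e=\sum_{j=0}^{d+1}r_j(u)\,b^{d+1}_j(t)$. On $e$ the barycentric coordinates are $\lambda_1=1-t$ and $\lambda_2=t$, so the bivariate Bernstein polynomials restrict to the univariate ones $b^{d+1}_j$ in the ordering from $v_1$ to $v_2$. Applying \eqref{eq:univariate-derivative} with degree $d+1$ gives
\[
\frac{\dd}{\dd t}(u|_e)=\sum_{j=0}^{d}\bigl(\mathsf U_{d+1}r(u)\bigr)_j\,b^d_j(t).
\]
Combining this with the chain-rule step, $(\nabla u\cdot t_e)|_e$ has Bernstein coefficients $|e|^{-1}\mathsf U_{d+1}r(u)$.

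It remains to identify these with $r_t(\nabla u)$. By definition, $r_t(\nabla u)$ is the vector $(t_e)_x r(\partial_xu)+(t_e)_y r(\partial_yu)$, built from the edge coefficients of the components of $G_Tc_T$ as after \eqref{eq:C1e}. By linearity of restriction and \eqref{eq:edge-restriction}, this vector is exactly the coefficient vector of the degree-$d$ polynomial $(\nabla u\cdot t_e)|_e$. Uniqueness of the univariate Bernstein expansion then yields \eqref{eq:localtracegrad}.

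The main point requiring care is the bookkeeping of orientation and vertex ordering. The endpoint ordering of $r(u)$ and $r_t$ must match the direction of $t_e$, since reversing it flips the sign of $\mathsf U_{d+1}$ and the sign of $t_e$ consistently. This is also what makes the identity compatible across the two triangles sharing $e$, so that \eqref{eq:intro-commute} follows by subtracting the two one-sided instances.
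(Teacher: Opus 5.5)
Your proof is correct and matches the paper's argument step for step: the affine parametrization of $e$, the chain rule $t_e\cdot\nabla u(x(t))=|e|^{-1}\frac{\dd}{\dd t}u(x(t))$, the univariate difference formula \eqref{eq:univariate-derivative} in degree $d+1$, and the identification of $r_t(\nabla u)$ with the coefficient vector of the same univariate polynomial via the description after \eqref{eq:C1e}. Your closing remark on orientation is also how the paper handles the reversal permutation, in the proof of Theorem~\ref{thm:constraint-commute}.
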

\begin{proof}
Parameterize $e$ by $x(t)=v_1+t(v_2-v_1)$, $0\le t\le1$. By the chain rule $t_e\cdot\nabla u(x(t))=|e|^{-1}\frac{\dd}{\dd t}u(x(t))$, and $u(x(t))$ is the univariate polynomial with coefficients $r(u)$ by \eqref{eq:edge-restriction}. Formula \eqref{eq:univariate-derivative} gives the coefficients $|e|^{-1}\mathsf U_{d+1}r(u)$ of its derivative. The left side of \eqref{eq:localtracegrad} is the coefficient vector of the same univariate polynomial by the description after \eqref{eq:C1e}.
\end{proof}

\begin{theorem}[constraint-level commutation]\label{thm:constraint-commute}
For the global smoothness matrices,
\begin{equation}\label{eq:C1GBC0}
   C_1^{\curl}D_0^{\rm br}=B_0C_0,\qquad B_0=\diag_e\bigl(|e|^{-1}\mathsf U_{d+1}\bigr),
\end{equation}
and consequently $D_0^{\rm br}(\ker C_0)\subset\ker C_1^{\curl}$.
\end{theorem}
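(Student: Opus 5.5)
The plan is to prove \eqref{eq:C1GBC0} one interior edge at a time and then stack. Both sides have block-row structure indexed by interior edges. The block of $C_1^{\curl}D_0^{\rm br}$ for an edge $e$ is $C_{1,e}^{\curl}D_0^{\rm br}$, and the corresponding block of $B_0C_0$ is $|e|^{-1}\mathsf U_{d+1}C_{0,e}$, because $B_0$ is block diagonal over edges. So it suffices to prove the edge identity \eqref{eq:intro-commute}, namely $C_{1,e}^{\curl}D_0^{\rm br}=|e|^{-1}\mathsf U_{d+1}C_{0,e}$, as an identity between linear maps on $S^{-1}_{d+1}(\tri)$. Equality on every broken coefficient vector $c$ is enough.

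Fix an interior edge $e=T^+\cap T^-$ with its orientation $t_e$, and let $u$ be the broken spline with coefficients $c$. I will use the same ordering of the domain points on $e$, from the initial to the terminal vertex of $t_e$, for the scalar traces from both sides and for the tangential traces. With that convention the edge blocks read
\[
C_{0,e}c=r^+(u)-r^-(u),\qquad C_{1,e}^{\curl}D_0^{\rm br}c=r_t^+(\nabla u)-r_t^-(\nabla u).
\]
Here $r^\pm$ and $r_t^\pm$ are the trace coefficient vectors from Lemma~\ref{lem:edge-commute}, computed from $u|_{T^\pm}$. The entries of the vector $D_0^{\rm br}c$ restricted to $T^\pm$ are exactly the coefficients of $\nabla(u|_{T^\pm})=G_{T^\pm}c_{T^\pm}$, since $D_0^{\rm br}=\diag_TG_T$. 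Applying Lemma~\ref{lem:edge-commute} on each side gives $r_t^\pm(\nabla u)=|e|^{-1}\mathsf U_{d+1}r^\pm(u)$. Subtracting the two sides and using linearity of $\mathsf U_{d+1}$ yields the edge identity.

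The only point needing care, and the step I expect to be the main obstacle, is the orientation bookkeeping. Lemma~\ref{lem:edge-commute} is stated with the edge parameterized from $v_1$ to $v_2$ with tangent $t_e$ in that direction. On the neighboring triangle the local vertex labelling may traverse $e$ in the opposite direction, as in the labelling $\widetilde T=\langle v_4,v_3,v_2\rangle$ of the smoothness section. I will resolve this by defining both $C_{0,e}$ and $C_{1,e}^{\curl}$ using the global orientation of $e$ and the single ordering of domain points induced by it, on both sides. Then Lemma~\ref{lem:edge-commute} is applied on each triangle with the parameterization $x(t)$ following $t_e$, not the local vertex order. The lemma holds for any affine parameterization of $e$ whose direction matches $t_e$, because its proof uses only the chain rule along that parameterization. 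The factor $|e|^{-1}$ is the same from both sides, since it depends only on the edge. If a triangle's local order reverses $e$, its trace vector is just the reversal $P r$ of the local one. Reading it in the global order is a fixed permutation applied consistently to both $r$ and $r_t$, so no sign issue arises.

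Finally, the inclusion follows immediately. If $C_0c=0$, then $C_1^{\curl}D_0^{\rm br}c=B_0C_0c=0$, so $D_0^{\rm br}c\in\ker C_1^{\curl}$. If boundary conditions are included, the same argument on a boundary edge with the one-sided functionals gives the analogous block with $B_0$ unchanged.
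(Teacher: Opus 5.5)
Your proposal is correct and follows the paper's proof: apply Lemma~\ref{lem:edge-commute} on $T^+$ and $T^-$ with the edge coefficients ordered along the global $t_e$, subtract, stack the edge blocks, and obtain the inclusion from $C_0c=0$. The paper handles a reversed local vertex order by composing with the reversal permutation, which commutes with $\mathsf U_{d+1}$ only up to a sign that cancels against the reversed tangent. Parameterizing along $t_e$ from the start, as you do, makes this automatic, so your main route is sound. Your side remark that the permutation acts ``consistently'' on $r$ and $r_t$ with no sign glosses over that anticommutation, but the argument does not depend on it.
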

\begin{proof}
Apply Lemma~\ref{lem:edge-commute} on both triangles adjacent to an interior edge, with the edge coefficient vectors of both triangles ordered along the global tangent $t_e$; on a triangle whose local vertex order runs against $t_e$ this means composing its coefficient vector with the reversal permutation, which commutes with \eqref{eq:Delta} up to the sign that the reversed tangent carries. Subtract the two identities. The jump of the tangential coefficients of the gradient is the difference matrix applied to the jump of the scalar coefficients, which is $C_{1,e}^{\curl}D_0^{\rm br}=|e|^{-1}\mathsf U_{d+1}C_{0,e}$. Stacking over edges gives \eqref{eq:C1GBC0}, and $C_0c=0$ gives $C_1^{\curl}D_0^{\rm br}c=B_0C_0c=0$.
\end{proof}

The proof is local to one edge and one triangle, and it proceeds without a basis of $S^0_{d+1}(\tri)$. The identity \eqref{eq:localtracegrad} is also a sparse matrix equality that an implementation can check to roundoff on every triangle; Section~\ref{sec:numerics} does so for degrees one to five.

\subsection{The planar complexes and their exactness}

Let $D_1$ be the broken curl from $[S^{-1}_d(\tri)]^2$ to $S^{-1}_{d-1}(\tri)$; no smoothness matrix is needed at the last slot. Set $D_{0,h}=D_0^{\rm br}|_{V_h^0}$ and $D_{1,h}=D_1|_{V^1_{h,\curl}}$.

\begin{corollary}[subcomplex]\label{cor:subcomplex}
The sequence
\begin{equation}\label{eq:global-complex}
   \R\hookrightarrow V_h^0\xrightarrow{D_{0,h}}V_{h,\curl}^1\xrightarrow{D_{1,h}}V_h^2\to0
\end{equation}
is a subcomplex of $\R\hookrightarrow H^1(\Omega)\xrightarrow{\nabla}H(\curl;\Omega)\xrightarrow{\curl}L^2(\Omega)\to0$, and $D_{1,h}D_{0,h}=0$.
\end{corollary}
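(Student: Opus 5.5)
The plan is to check three things in order: each space in \eqref{eq:global-complex} lies in the corresponding continuous space, each discrete map carries its space into the next one, and consecutive discrete maps compose to zero. The discrete maps are restrictions of the broken operators $D_0^{\rm br}$ and $D_1$.

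\emph{Inclusions of the spaces.} By Theorem~\ref{thm:conformity} we have $V_h^0\subset H^1(\Omega)$, $V_{h,\curl}^1\subset H(\curl;\Omega)$ and $V_h^2\subset L^2(\Omega)$. The inclusion $\R\hookrightarrow V_h^0$ holds because a constant has all B-coefficients equal to the same value on every triangle, so $C_0$ annihilates it.

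\emph{The maps land in the right spaces and agree with the continuous ones.} Theorem~\ref{thm:constraint-commute} gives $D_0^{\rm br}(\ker C_0)\subset\ker C_1^{\curl}$, so $D_{0,h}$ maps $V_h^0$ into $V_{h,\curl}^1$. We also need $D_{0,h}$ to agree with the distributional gradient. For $u\in H^1(\Omega)$ that is piecewise polynomial, the weak gradient equals the broken gradient, since the interface terms from integration by parts cancel by continuity of traces. Likewise, for $v\in V_{h,\curl}^1$ the weak curl equals the broken curl $D_1v$, because the edge terms $\int_e[v\cdot t_e]\phi$ vanish when tangential traces are single valued. That $D_1v$ lies in $V_h^2$ is immediate: the curl of a degree-$d$ polynomial field has degree $d-1$ on each $T$, and $V_h^2$ imposes no inter-element condition.

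\emph{Composition.} On each triangle, $\curl\nabla u=0$ for polynomial $u$. At the coefficient level this reads $\diag_T(\curl_T G_T)=0$, so $D_1D_0^{\rm br}=0$ on the whole broken space, and in particular on $V_h^0$. Finally, $D_{0,h}$ vanishes on constants and $D_{1,h}$ trivially maps onto the terminal $0$, so \eqref{eq:global-complex} is a complex. Since all spaces and maps are the restrictions just described, it is a subcomplex of the continuous de Rham sequence.

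The only step needing care is the identification of broken and weak derivatives. I would handle it by the elementwise integration-by-parts argument just given, which the trace characterization in Theorem~\ref{thm:conformity} already implicitly uses. Everything else follows from Theorem~\ref{thm:constraint-commute} and the local exactness in \eqref{eq:local-full}.
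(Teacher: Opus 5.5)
Your proposal is correct and follows the paper's proof: the inclusion $D_{0,h}V_h^0\subset V^1_{h,\curl}$ comes from Theorem~\ref{thm:constraint-commute}, and $D_1D_0^{\rm br}=0$ comes from $\curl\nabla=0$ on each triangle. You also spell out two things the paper leaves implicit: that broken derivatives equal weak derivatives, and that the spaces are included in the continuous ones by Theorem~\ref{thm:conformity}. One small correction: the complex property needs only $\curl\nabla=0$, not the local exactness of \eqref{eq:local-full}.
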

\begin{proof}
The inclusion $D_{0,h}V_h^0\subset V^1_{h,\curl}$ is Theorem~\ref{thm:constraint-commute}, and $\curl\nabla=0$ on each triangle gives $D_1D_0^{\rm br}=0$.
\end{proof}

\begin{theorem}[exactness on a simply connected domain]\label{thm:global-exact}
Let $\Omega$ be simply connected with boundary, let $\tri$ be a regular triangulation of $\Omega$, and let the spaces in \eqref{eq:global-complex} carry no essential boundary conditions. Then \eqref{eq:global-complex} is exact: $\ker D_{0,h}$ consists of the constants, $\ker D_{1,h}=\range D_{0,h}$, and $D_{1,h}V^1_{h,\curl}=V_h^2$. For a domain with holes the dimension of $\ker D_{1,h}/\range D_{0,h}$ equals the first Betti number of $\Omega$.
\end{theorem}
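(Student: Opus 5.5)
The plan is to prove the three exactness statements one at a time. The first two follow from the continuous de Rham complex once Theorem~\ref{thm:conformity} identifies the kernels with conforming spaces. The third, and the Betti number count, follow from a dimension count via Euler's formula.

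\emph{Kernel of $D_{0,h}$.} If $u\in V_h^0$ and $\nabla u=0$ on every triangle, then $u$ is constant on each triangle. By Theorem~\ref{thm:conformity}, $u\in H^1(\Omega)$, so the constants agree across every interior edge. Since $\Omega$ is connected, $u$ is globally constant.

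\emph{Kernel of $D_{1,h}$ equals range of $D_{0,h}$.} Let $v\in V^1_{h,\curl}$ with $D_1v=0$. By Theorem~\ref{thm:conformity}, $v\in H(\curl;\Omega)$ and $\curl v=0$ in $L^2$. On a simply connected Lipschitz domain there is then $u\in H^1(\Omega)$ with $\nabla u=v$. On each triangle, $\nabla(u|_T)\in[\Pp_d(T)]^2$, so $u|_T\in\Pp_{d+1}(T)$ by exactness of the local sequence \eqref{eq:local-full}. Hence $u$ is a continuous piecewise polynomial of degree $d+1$, that is, $u\in V_h^0$ by Theorem~\ref{thm:conformity}, and $v=D_{0,h}u$. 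A fully discrete version of the same step fixes a base triangle and integrates $v$ along paths of triangles through the dual graph. Tangential continuity, read through Lemma~\ref{lem:edge-commute} on each edge, makes the local potentials match across edges. Path-independence around interior vertices follows because the triangles around an interior vertex form a disk, and simple connectivity makes the dual graph's cycles generated by these vertex stars. I would mention this route but rely on the $H^1$ argument.

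\emph{Surjectivity of $D_{1,h}$.} This is the main obstacle, since a local preimage in $[\Pp_d(T)]^2$ need not be tangentially continuous. I would avoid a construction and count dimensions. We have $\dim V_h^0=N_V+dN_E+\binom{d}{2}N_T$, the standard Lagrange count for degree $d+1$. By \eqref{eq:dimV1}, $\dim V^1_{h,\curl}=(d+1)N_E+(d^2-1)N_T$, and $\dim V_h^2=\binom{d+1}{2}N_T$. By the first two steps, $\dim\range D_{1,h}=\dim V^1_{h,\curl}-(\dim V_h^0-1)$. A direct computation gives
\[
\dim V^1_{h,\curl}-\dim V_h^0+1-\dim V_h^2 = N_E-N_V-N_T+1 .
\]
Euler's formula for a triangulated simply connected (disk-like) domain is $N_V-N_E+N_T=1$, so the right-hand side is zero and $D_{1,h}$ is onto.

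\emph{Domain with holes.} Here the Euler characteristic is $1-b_1$ with $b_1$ the first Betti number. The first step still holds because $\Omega$ is connected. For the last slot I would show surjectivity directly, so that the dimension count forces the cohomology dimension. Given $q\in V_h^2$, extend $\Omega$ to a simply connected polygon by triangulating the holes. Apply the simply connected result on the extended mesh, extending $q$ by zero, and restrict the preimage back to $\Omega$; restriction preserves tangential continuity. Then
\[
\dim\ker D_{1,h}-\dim\range D_{0,h}=\dim V^1_{h,\curl}-\dim V_h^2-\dim V_h^0+1=N_E-N_V-N_T+1=b_1 ,
\]
which gives the stated Betti number count.
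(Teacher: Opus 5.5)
Your proof is correct, and your surjectivity step is the paper's argument: the same three dimension formulas, Euler's relation $N_V-N_E+N_T=1$, and rank--nullity. The routes differ in the other two places.

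\textbf{Middle exactness.} For $\ker D_{1,h}=\range D_{0,h}$ the paper stays discrete. It takes local potentials $u_T$ from the exactness of \eqref{eq:local-full} and notes that tangential continuity makes each jump $u_{T^+}-u_{T^-}$ constant on the edge. It then fixes the constants along a spanning tree of the dual graph and kills the remaining cycle defects by a circulation/Stokes argument. You instead invoke the continuous Poincar\'e lemma for curl-free $L^2$ fields on a simply connected Lipschitz domain. You then use Theorem~\ref{thm:conformity} in both directions to move between $V^1_{h,\curl}$, $H(\curl;\Omega)$, $H^1(\Omega)$ and $V_h^0$. Your route is shorter and separates the topology from the discrete structure. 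It does import a functional-analytic theorem and needs $\Omega$ to be Lipschitz, which is harmless for a regular triangulation of a polygon. The paper's route is constructive and local, which fits its no-global-basis viewpoint.

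Your sketched discrete alternative is actually sharper than the paper's. Around an interior vertex $v$ with triangles $T_1,\dots,T_k$ in cyclic order, every jump constant equals $u_{T_i}(v)-u_{T_{i+1}}(v)$, so the sum telescopes to zero with no appeal to Stokes. Simple connectivity then enters only through the fact that the vertex-star cycles generate the cycle space of the dual graph.

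\textbf{Domains with holes.} The paper only asserts that the cycle condition fails on first-cohomology representatives and appeals to the classical discrete cohomology. You give an actual argument. First you obtain surjectivity of $D_{1,h}$ by filling the holes, which are simple polygons for a regular triangulation. You solve on the simply connected extension with $q$ extended by zero and restrict back, which preserves tangential continuity. The same count with $N_V-N_E+N_T=1-b_1$ then yields exactly $b_1$. This is more complete than what the paper writes.
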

\begin{proof}
Let $v_h\in V^1_{h,\curl}$ with $\curl v_h=0$. By exactness of \eqref{eq:local-full} there is $u_T\in\Pp_{d+1}(T)$ with $\nabla u_T=v_h|_T$ on each triangle. Across an interior edge tangential continuity gives $D_{t_e}(u_{T^+}-u_{T^-})=0$ on $e$, so the jump of the potentials is a constant on $e$. Choose the additive constants along a spanning tree of the dual graph of $\tri$ so that the potentials agree on every tree edge. Around any cycle of the dual graph the accumulated jump equals the circulation of $v_h$ around the corresponding loop in $\Omega$, which vanishes by Stokes' theorem because $v_h$ is curl free and $\Omega$ is simply connected. The adjusted potentials agree on every interior edge and define $u_h\in V_h^0$ with $\nabla u_h=v_h$. For surjectivity, $\dim V_h^0=N_V+dN_E+\binom d2N_T$, the count of domain points of degree $d+1$ at vertices, in the interiors of edges and in the interiors of triangles \cite[Chap.~5]{LaiSchumaker2007}, $\dim V_h^2=\binom{d+1}2N_T$, and \eqref{eq:dimV1} together with Euler's relation $N_V-N_E+N_T=1$ give $\dim V^1_{h,\curl}=(\dim V_h^0-1)+\dim V_h^2$. Since $\ker D_{1,h}=\range D_{0,h}$ has dimension $\dim V_h^0-1$, rank--nullity gives $\rank D_{1,h}=\dim V_h^2$. For a domain with holes the cycle condition fails exactly on representatives of the first cohomology, and Theorem~\ref{thm:conformity} identifies the space with the classical one, whose discrete cohomology is that of the mesh.
\end{proof}

The same argument applies to the first N\'ed\'elec space: the identity \eqref{eq:localtracegrad} is a statement about polynomial traces and does not depend on the interior basis, the local sequence with $\Ncal_d$ is exact, and the count $(d+1)N_E+d(d+1)N_T=(\dim V_h^0-1)+\binom{d+2}2N_T$ follows from Euler's relation.

\subsection{Normal continuity by rotation}

With $n_e=Rt_e$ one has $n_e\cdot R\nabla u=t_e\cdot\nabla u$, so the difference matrix that controls tangential continuity of gradients also controls normal continuity of rotated gradients.

\begin{theorem}[rotated complex]\label{thm:hdiv-complex}
$C_1^{\divv}RD_0^{\rm br}=B_0C_0$ with the same $B_0$ as in \eqref{eq:C1GBC0}. Consequently
\[
   \R\hookrightarrow V_h^0\xrightarrow{R\nabla}V^1_{h,\divv}\xrightarrow{\divv}V_h^2\to0
\]
is a subcomplex, exact on simply connected domains, whose middle space is the BDM family; replacing $[\Pp_d]^2$ by $\RT_d$ gives the Raviart--Thomas family with the same normal-coefficient functionals.
\end{theorem}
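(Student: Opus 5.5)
The plan is to reduce everything to the tangential case already settled in Lemma~\ref{lem:edge-commute}, Theorem~\ref{thm:constraint-commute} and Theorem~\ref{thm:global-exact}, using the pointwise identity $n_e\cdot R\nabla u=Rt_e\cdot R\nabla u=t_e\cdot\nabla u$, which holds because $R$ is orthogonal.

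\textbf{Step 1 (commuting identity).} On one triangle $T$ adjacent to $e$, the normal-component coefficients of $R\nabla u$ at the domain points of $e$ are $(n_e)_x(R\nabla u)^x_\xi+(n_e)_y(R\nabla u)^y_\xi$. Since $R$ acts on the pair $(c^x_T,c^y_T)$ by the constant matrix $R$, these coefficients equal $(t_e)_x(\nabla u)^x_\xi+(t_e)_y(\nabla u)^y_\xi$. Hence the normal edge-coefficient vector of $R G_T c_T$ coincides with the tangential edge-coefficient vector $r_t(\nabla u)$. Lemma~\ref{lem:edge-commute} then gives $|e|^{-1}\mathsf U_{d+1}r(u)$. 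Subtract the identities from the two sides of $e$, using the same global orientation and the same reversal-permutation bookkeeping as in the proof of Theorem~\ref{thm:constraint-commute}. This yields $C^{\divv}_{1,e}RD_0^{\rm br}=|e|^{-1}\mathsf U_{d+1}C_{0,e}$, and stacking over edges gives the identity with the same $B_0$. The subcomplex property follows as in Corollary~\ref{cor:subcomplex}: $C_0c=0$ implies $R D_0^{\rm br}c\in\ker C_1^{\divv}$, and $\divv R\nabla=-\partial_x\partial_y+\partial_y\partial_x=0$ elementwise.

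\textbf{Step 2 (exactness).} The simplest route is to transport Theorem~\ref{thm:global-exact} through $R$. The map $v\mapsto R^{-1}v$ sends $V^1_{h,\divv}$ bijectively onto $V^1_{h,\curl}$, because $n_e\cdot v=t_e\cdot R^{-1}v$. It also satisfies $\divv v=\curl(R^{-1}v)$ up to a sign, so that $\divv R\nabla$ corresponds to $\curl\nabla$. Kernels, ranges and surjectivity therefore carry over verbatim from the tangential complex, including the count of constants in $\ker R\nabla$ and the Betti-number statement. The identification of the middle space with BDM is Theorem~\ref{thm:conformity}.

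\textbf{Step 3 (Raviart--Thomas).} Replace $[\Pp_d]^2$ by $\RT_d$. Normal traces of $\RT_d$ fields on an edge are univariate polynomials of degree $d$, since $x\cdot n_e$ is constant on $e$. The normal-coefficient functionals are thus defined through the chosen coefficient-to-edge map. The identity in Step 1 only involves $R\nabla u$ with $u\in\Pp_{d+1}$, which lies in $[\Pp_d]^2\subset\RT_d$, so it holds unchanged. Exactness follows from the exact local sequence $\Pp_{d+1}\xrightarrow{R\nabla}\RT_d\xrightarrow{\divv}\Pp_d$, via the same potential-gluing argument, plus a dimension count $(d+1)N_E+d(d+1)N_T=(\dim V_h^0-1)+\binom{d+2}2N_T$ with Euler's relation. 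This count is the rotated form of the N\'ed\'elec remark after Theorem~\ref{thm:global-exact}.

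\textbf{Main obstacle.} I expect the main difficulty to be sign and orientation bookkeeping. The chosen $n_e=Rt_e$ must be consistent between the two sides of each edge, and on triangles traversed against $t_e$ the reversal of the edge ordering introduces a sign in $\mathsf U_{d+1}$. I would handle this exactly as in Theorem~\ref{thm:constraint-commute}, so that both sides of the identity acquire the same sign. Everything else is a direct transcription of the tangential arguments.
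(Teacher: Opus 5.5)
Your proposal is correct and follows the paper's own proof. The paper proves the commuting relation from $n_e\cdot R\nabla u=t_e\cdot\nabla u$ and Lemma~\ref{lem:edge-commute}, gets the complex from $\divv R\nabla=0$, and obtains exactness from Theorem~\ref{thm:global-exact} by rotation. Your Step~3 spells out the Raviart--Thomas case more explicitly than the paper does, but that case also follows directly by rotation, since $R^{-1}\RT_d=\Ncal_d$.
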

\begin{proof}
The identity follows from $n_e\cdot R\nabla u=t_e\cdot\nabla u$ and Lemma~\ref{lem:edge-commute}; $\divv R\nabla=0$ gives the complex, and exactness follows from Theorem~\ref{thm:global-exact} by rotation.
\end{proof}

\section{Higher smoothness and smooth complexes}\label{sec:smooth}

The spaces of Section~\ref{sec:constraints} impose only the continuity required by the Sobolev space. Spline methods often require more, and the same edge storage holds $C^r$ smoothness functionals and de Rham functionals side by side. In this section we index scalar spaces by their own degree.

\subsection{A row-space criterion}

Let $\widetilde V_h^k$ be broken spaces, let $D_k:\widetilde V_h^k\to\widetilde V_h^{k+1}$ be an elementwise differential operator, and let $J_k$ be any collection of smoothness functionals, not necessarily independent, with $V_h^k=\ker J_k$.

\begin{theorem}[row-space criterion]\label{thm:rowspace}
The following are equivalent:
\begin{enumerate}[label=(\roman*)]
\item $D_k(\ker J_k)\subset\ker J_{k+1}$;
\item $\range\bigl((J_{k+1}D_k)^T\bigr)\subset\range(J_k^T)$;
\item there is a matrix $B_k$ with $J_{k+1}D_k=B_kJ_k$.
\end{enumerate}
If the smoothness functionals and the differential operator are assembled edge by edge and (iii) holds on every edge patch, then it holds globally with $B_k$ block diagonal by edges.
\end{theorem}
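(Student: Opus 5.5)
The equivalences are finite-dimensional linear algebra. I will prove the cycle (i)$\Rightarrow$(ii)$\Rightarrow$(iii)$\Rightarrow$(i) using the single fact that, for any matrix $M$, $\range(M^T)=(\ker M)^\perp$.

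For (i)$\Rightarrow$(ii), condition (i) says $J_{k+1}D_k c=0$ whenever $J_k c=0$, that is, $\ker J_k\subset\ker(J_{k+1}D_k)$. Taking orthogonal complements reverses the inclusion:
\[
\range\bigl((J_{k+1}D_k)^T\bigr)=\ker(J_{k+1}D_k)^\perp\subset(\ker J_k)^\perp=\range(J_k^T).
\]

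For (ii)$\Rightarrow$(iii), write $\rho_i$ for the $i$-th row of $J_{k+1}D_k$. By (ii), each $\rho_i^T$ lies in $\range(J_k^T)$, so $\rho_i^T=J_k^T b_i$ for some coefficient vector $b_i$. Stacking the $b_i^T$ as the rows of $B_k$ gives $J_{k+1}D_k=B_kJ_k$. Because the rows of $J_k$ may be dependent, $B_k$ need not be unique. The statement only asks for existence, so this causes no difficulty; a canonical choice is $B_k=J_{k+1}D_kJ_k^{+}$, using the Moore--Penrose pseudoinverse.

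For (iii)$\Rightarrow$(i), if $J_kc=0$ then $J_{k+1}D_kc=B_kJ_kc=0$. This is exactly the argument already used in Theorem~\ref{thm:constraint-commute}.

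For the final assertion, the rows of $J_{k+1}$ are grouped into edge blocks $J_{k+1,e}$. Since $D_k$ is elementwise, $J_{k+1,e}D_k$ involves only the coefficients of the triangles in the patch of $e$, so the local statement can be posed on that patch. The hypothesis is that on each patch there is $B_{k,e}$ with $J_{k+1,e}D_k=B_{k,e}J_{k,e}$. Here $J_{k,e}$ is the edge block of $J_k$, extended by zero outside the patch, and both sides act on the global broken vector; the edge block of $J_k$ is the same whether it is viewed on the patch or globally. Stacking these identities over edges, with $J_k$ ordered by the same edge blocks, gives
\[
J_{k+1}D_k=\diag_e(B_{k,e})\,J_k .
\]
So $B_k$ is block diagonal by edges.

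The main point to state carefully is the interpretation of ``(iii) holds on every edge patch.'' It must mean that the edge-$e$ rows of $J_{k+1}D_k$ are combinations of the edge-$e$ rows of $J_k$ alone, and not merely of rows of $J_k$ from the whole patch. I will fix this interpretation explicitly. If only patch-level combinations are allowed, the same stacking still yields a valid global $B_k$, but it is sparse rather than block diagonal. With the edge-$e$ interpretation, the block-diagonal form is immediate, and Theorem~\ref{thm:constraint-commute} is the instance $B_{0,e}=|e|^{-1}\mathsf U_{d+1}$.
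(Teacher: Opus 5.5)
Your proof is correct and follows the paper's argument: (i)$\Leftrightarrow$(ii) via $\range(M^T)=(\ker M)^\perp$, (ii)$\Leftrightarrow$(iii) by reading row-space inclusion as the existence of combination coefficients, and the block-diagonal global factor by stacking edge identities whose rows are supported on a single edge patch. Your explicit choice $B_k=J_{k+1}D_kJ_k^{+}$ and your requirement that the local hypothesis use only the edge-$e$ rows of $J_k$ make precise what the paper's one-line justification of the local statement leaves implicit.
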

\begin{proof}
(i) says $\ker J_k\subset\ker(J_{k+1}D_k)$, and in finite dimensions $\ker A\subset\ker B$ if and only if $\range B^T\subset\range A^T$, which is (ii). Row-space inclusion means that every row of $J_{k+1}D_k$ is a combination of rows of $J_k$, which is (iii). The local statement holds because each row of $J_{k+1}D_k$ is supported on the triangles incident to one edge.
\end{proof}

\begin{corollary}[local diagnostic]\label{cor:localdiagnostic}
With $J_k^+$ the Moore--Penrose pseudoinverse, the pair $(J_k,J_{k+1})$ is compatible if and only if $J_{k+1}D_k(I-J_k^+J_k)=0$, and this can be tested on each edge patch separately.
\end{corollary}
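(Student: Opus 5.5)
The plan is to reduce the corollary to condition (i) of Theorem~\ref{thm:rowspace}, identifying $\ker J_k$ through the orthogonal projector built from the pseudoinverse. The standard Moore--Penrose identities give that $P=J_k^+J_k$ is the orthogonal projector onto $\range(J_k^T)$. Hence $I-J_k^+J_k$ is the orthogonal projector onto $\range(J_k^T)^\perp=\ker J_k$, so $\range(I-J_k^+J_k)=\ker J_k$ exactly. No independence of the rows of $J_k$ is needed, since the pseudoinverse handles redundant functionals.

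With this in hand, the equivalence is immediate. The condition $J_{k+1}D_k(I-J_k^+J_k)=0$ holds if and only if $J_{k+1}D_k$ vanishes on $\range(I-J_k^+J_k)=\ker J_k$. That is the statement $\ker J_k\subset\ker(J_{k+1}D_k)$, which is (i), and (i) is compatibility by Theorem~\ref{thm:rowspace}. Equivalently, when the condition holds, $B_k=J_{k+1}D_kJ_k^+$ satisfies (iii), because $J_{k+1}D_k=J_{k+1}D_kJ_k^+J_k$.

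The edge-by-edge claim is the only point needing care, and I expect it to be the main obstacle. The global projector $I-J_k^+J_k$ does not split into edge blocks when functionals at a vertex share coefficients, so I would not localize the global formula directly. Instead I would localize the equivalent row-space condition (ii)/(iii). Fix an edge $e$ and let $\omega_e$ be the set of coefficients of the triangles incident to $e$. The rows of $J_{k+1}D_k$ attached to $e$ are supported in $\omega_e$, because $D_k$ is elementwise. Let $J_{k,e}$ denote the rows of $J_k$ attached to $e$, restricted to $\omega_e$. The local test is
\[
J_{k+1,e}D_k|_{\omega_e}\bigl(I-J_{k,e}^+J_{k,e}\bigr)=0 .
\]
By the argument above, this test holds if and only if the rows of $J_{k+1,e}D_k$ lie in the row space of $J_{k,e}$, that is, $J_{k+1,e}D_k=B_{k,e}J_{k,e}$. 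The final sentence of Theorem~\ref{thm:rowspace} then assembles these local identities into a global identity with $B_k=\diag_e B_{k,e}$, so the local tests passing on every edge patch imply global compatibility.

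For the converse direction of the localized test, I would note that it is the direction used in practice, and that global compatibility need not force a row-space relation that is local to one edge. I would therefore state the per-patch test as sufficient and as exact whenever the factor $B_k$ can be chosen edge-local. This is the situation of Theorem~\ref{thm:constraint-commute} and of the componentwise profiles treated next.
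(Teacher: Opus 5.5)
Your proposal is correct and follows the paper's route: $I-J_k^+J_k$ is the orthogonal projector onto $\ker J_k$, so the condition is (i) of Theorem~\ref{thm:rowspace}, and the patch-wise claim rests on the final sentence of that theorem, which gives only the sufficient direction you prove. You are right not to claim the converse. On a Clough--Tocher split one has $S^1_2=\Pp_2$, so $S^1_2\to[S^1_1]^2$ is globally compatible. Yet on every interior edge patch, the function equal to $\ell_e^2$ on one triangle and $0$ on the other, with $\ell_e$ a linear form vanishing on $e$, is $C^1$ while its gradient is not, so every local test fails.
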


The criterion is a design tool. For a candidate smooth vector spline space one assembles the smoothness functionals, differentiates the broken coefficients, and tests the corollary locally. If the test fails, the profile cannot form a subcomplex; if it passes, the subcomplex property is established before any dimension formula or determining set is known. Exactness is a separate question, and the criterion does not address it.

A profile that fails the test is $S^r_d(\tri)\to[S^r_{d-1}(\tri)]^2$ for $r\ge1$, the same smoothness on both sides: the gradient of a $C^r$ spline is only $C^{r-1}$, so the order-$r$ functionals of the gradient are not in the span of the order-$r$ functionals of the scalar, and the local residual $J_{k+1}D_k(I-J_k^+J_k)$ is nonzero on every edge patch. The dependence among the scalar functionals around a vertex is no obstacle to the test, since Theorem~\ref{thm:rowspace} is stated for row spaces and needs no independence.

\subsection{Explicit factors for componentwise \texorpdfstring{$C^r$}{Cr} profiles}

The componentwise profile
\begin{equation}\label{eq:strong-profile}
   S^r_d(\tri)\xrightarrow{\nabla}[S^{r-1}_{d-1}(\tri)]^2\xrightarrow{\curl}S^{r-2}_{d-2}(\tri),
\end{equation}
with the convention that $S^s$ is discontinuous for $s<0$, is the natural smooth analogue of \eqref{eq:global-complex}, and it is the sequence studied by Alfeld and Sorokina \cite{AlfeldSorokina2016}. It is a subcomplex because differentiation lowers the smoothness by one; the following theorem gives the factor $B_k$ of Theorem~\ref{thm:rowspace} explicitly, with Theorem~\ref{thm:constraint-commute} as the case $r=0$.

Fix an interior edge $e$ with unit tangent $t_e$ and unit normal $n_e=Rt_e$, so that $(t_e,n_e)$ is positively oriented. For a scalar spline $s$ let $\gamma_m(s)$, $m=0,\dots,r$, be the coefficient vectors of $(D_{n_e}^ms)|_e$ from the side of $T$, as at the end of Section~\ref{sec:bb}, and write $\Gamma^{(r)}_{e,T}$ for the map $c_T\mapsto(\gamma_0(s),\dots,\gamma_r(s))$. For a vector spline $v$ write $v=v_tt_e+v_nn_e$ with $v_t=v\cdot t_e$ and $v_n=v\cdot n_e$ on $T$, and let $\Gamma^{(r-1)}_{e,T}$ map its coefficients to $(\gamma_0(v_t),\gamma_0(v_n),\dots,\gamma_{r-1}(v_t),\gamma_{r-1}(v_n))$. Since $t_e$ and $n_e$ are constant, the frame components are fixed invertible combinations of the Cartesian components, so the kernel of the assembled functionals is $[S^{r-1}_{d-1}(\tri)]^2$ in either description. The jump functionals are $J^{(r)}_{0,e}=[\,\Gamma^{(r)}_{e,T^+}\ \ -\Gamma^{(r)}_{e,T^-}\,]$ and $J^{(r-1)}_{1,e}$ correspondingly; their row spaces coincide with those of the coefficient conditions \eqref{eq:LS-smoothness}, applied componentwise for the vector case.

\begin{theorem}[jet commutation]\label{thm:jet-commute}
Let $r\ge1$ and $d\ge r+1$, and let $G_T$ denote the gradient matrix from $\Pp_d(T)$ to $[\Pp_{d-1}(T)]^2$. On every edge $e$ of $T$,
\begin{equation}\label{eq:jet-local}
   \Gamma^{(r-1)}_{e,T}G_T=\mathsf B^{(r)}_e\,\Gamma^{(r)}_{e,T},
\end{equation}
where $\mathsf B^{(r)}_e$ has $2r$ block rows and $r+1$ block columns and its only nonzero blocks are
\[
   \bigl(\mathsf B^{(r)}_e\bigr)_{2m+1,\,m}=|e|^{-1}\mathsf U_{d-m},\qquad
   \bigl(\mathsf B^{(r)}_e\bigr)_{2m+2,\,m+1}=I,\qquad m=0,\dots,r-1 .
\]
Consequently $J^{(r-1)}_1D_0^{\rm br}=B_0^{(r)}J^{(r)}_0$ with $B_0^{(r)}=\diag_e\mathsf B^{(r)}_e$, and $\nabla S^r_d(\tri)\subset[S^{r-1}_{d-1}(\tri)]^2$. In the same way, for the broken curl $D_{1}$,
\begin{equation}\label{eq:jet-curl}
   \Gamma^{(r-2)}_{e,T}D_{1,T}=\mathsf B^{(r-1)}_{e,\curl}\,\Gamma^{(r-1)}_{e,T},\qquad
   \bigl(\mathsf B^{(r-1)}_{e,\curl}\bigr)_{m,\,2m+2}=|e|^{-1}\mathsf U_{d-1-m},\quad
   \bigl(\mathsf B^{(r-1)}_{e,\curl}\bigr)_{m,\,2m+3}=-I,
\end{equation}
for $m=0,\dots,r-2$, and \eqref{eq:strong-profile} is a subcomplex with edge-local factors.
\end{theorem}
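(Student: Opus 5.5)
The plan is to prove \eqref{eq:jet-local} block row by block row. Both sides are linear maps on $c_T$, so it suffices to check the identity as an equality of univariate polynomials on $e$ for every $u\in\Pp_d(T)$, and then use the linear independence of the univariate Bernstein basis. Write $\nabla u=(D_{t_e}u)\,t_e+(D_{n_e}u)\,n_e$. Then $(\nabla u)_t=D_{t_e}u$ and $(\nabla u)_n=D_{n_e}u$ on $T$. Since $t_e,n_e$ are constant vectors, directional derivatives commute, so for $m=0,\dots,r-1$
\[
   D_{n_e}^m(\nabla u)_t=D_{t_e}\bigl(D_{n_e}^mu\bigr),\qquad
   D_{n_e}^m(\nabla u)_n=D_{n_e}^{m+1}u .
\]
The second identity says directly that $\gamma_m((\nabla u)_n)=\gamma_{m+1}(u)$. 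This gives block row $2m+2$ with the identity block in column $m+1$.

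For the first identity, restriction to $e$ commutes with the tangential derivative, since $D_{t_e}$ differentiates along $e$. So $(D_{t_e}w)|_e$ with $w=D_{n_e}^mu\in\Pp_{d-m}(T)$ is the tangential derivative of $w|_e$. By the computation in Lemma~\ref{lem:edge-commute}, now with degree $d-m$ in place of $d+1$, its coefficient vector is $|e|^{-1}\mathsf U_{d-m}\gamma_m(u)$. This gives block row $2m+1$, column $m$. Collecting these blocks shows that $\mathsf B^{(r)}_e$ has exactly the stated sparsity, and since it does not depend on $T$, the identity holds on both sides of $e$.

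For the global statement, I subtract the identities for $T^+$ and $T^-$, taking care with orientation. Both triangles must use the same global $t_e,n_e$, with edge coefficients ordered along $t_e$ as in the proof of Theorem~\ref{thm:constraint-commute}. Because $\mathsf B^{(r)}_e$ is the same matrix on both sides, subtracting gives $J^{(r-1)}_{1,e}D_0^{\rm br}=\mathsf B^{(r)}_eJ^{(r)}_{0,e}$. Stacking over edges gives the block-diagonal $B_0^{(r)}$, and the inclusion $\nabla S^r_d\subset[S^{r-1}_{d-1}]^2$ then follows from Theorem~\ref{thm:rowspace}(iii)$\Rightarrow$(i), using \eqref{eq:Srd-kernel} together with the row-space equivalence of the $\gamma$-description and the conditions \eqref{eq:LS-smoothness}.

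The curl identity \eqref{eq:jet-curl} follows the same pattern. Since $(t_e,n_e)$ is a positively oriented orthonormal frame, the curl is frame-invariant: $\curl v=D_{t_e}v_n-D_{n_e}v_t$. Applying $D_{n_e}^m$ and restricting to $e$ gives
\[
   \gamma_m(\curl v)=|e|^{-1}\mathsf U_{d-1-m}\gamma_m(v_n)-\gamma_{m+1}(v_t).
\]
The first term uses the tangential-derivative argument again, now at degree $d-1-m$. In the ordering $(\gamma_0(v_t),\gamma_0(v_n),\gamma_1(v_t),\dots)$ these two terms are the stated positions, provided the block indices are read $0$-based, and the subcomplex conclusion then follows as before.

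I expect the main obstacle to be bookkeeping rather than analysis. First, I need to confirm the index conventions: a mixed $0$-/$1$-based reading of the block positions is required, and the hypotheses $d\ge r+1$ are what keep $\mathsf U_{d-m}$ and $\mathsf U_{d-1-m}$ of nonnegative degree. Second, I need to confirm that the reversal permutation on a triangle whose local order runs against $t_e$ does not introduce a sign. It should not, because here $t_e$ is the global tangent and the derivative is taken in that fixed direction. Only the coefficient ordering is permuted, and $\mathsf U$ commutes with reversal up to the sign that is absorbed by reversing the parameter.
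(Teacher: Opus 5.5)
Your proposal is correct and follows the paper's proof essentially step for step. Both arguments commute $D_{t_e}$ with $D_{n_e}^m$, read off $|e|^{-1}\mathsf U_{d-m}$ from the chain rule along $e$, get the identity block from $D_{n_e}^m(\nabla u)_n=D_{n_e}^{m+1}u$, subtract the two one-sided identities in a common frame, and use $\curl v=D_{t_e}v_n-D_{n_e}v_t$ for \eqref{eq:jet-curl}.

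One bookkeeping slip: in \eqref{eq:jet-curl} the column positions $2m+2$ and $2m+3$ land on $\gamma_m(v_n)$ and $\gamma_{m+1}(v_t)$ only when the vector blocks are counted from $1$ and the scalar rows from $0$. That is the same convention as in \eqref{eq:jet-local}, not the $0$-based reading you wrote.
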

\begin{proof}
Let $s\in\Pp_d(T)$ and $v=\nabla s$. Since $t_e$ and $n_e$ are constant, $D_{t_e}$ and $D_{n_e}$ commute, and $(D_{t_e}w)|_e=|e|^{-1}\frac{\dd}{\dd t}(w|_e)$ for every polynomial $w$ by the chain rule. Hence for $0\le m\le r-1$,
\[
   (D_{n_e}^mv_t)|_e=(D_{n_e}^mD_{t_e}s)|_e=|e|^{-1}\tfrac{\dd}{\dd t}\bigl((D_{n_e}^ms)|_e\bigr),
   \qquad
   (D_{n_e}^mv_n)|_e=(D_{n_e}^{m+1}s)|_e .
\]
The polynomial $(D_{n_e}^ms)|_e$ has degree $d-m$, so by \eqref{eq:univariate-derivative} the first identity reads $\gamma_m(v_t)=|e|^{-1}\mathsf U_{d-m}\gamma_m(s)$, and the second reads $\gamma_m(v_n)=\gamma_{m+1}(s)$. These $2r$ identities are the block rows of \eqref{eq:jet-local}. Applying \eqref{eq:jet-local} on $T^+$ and $T^-$ with the same frame and subtracting gives $J^{(r-1)}_{1,e}D_0^{\rm br}=\mathsf B^{(r)}_eJ^{(r)}_{0,e}$, and stacking over edges gives the global identity; the inclusion follows as in Theorem~\ref{thm:constraint-commute}. For the curl, $\curl v=D_{t_e}v_n-D_{n_e}v_t$ on $T$ because the frame is orthonormal and positively oriented, so for $0\le m\le r-2$,
\[
   \gamma_m(\curl v)=|e|^{-1}\mathsf U_{d-1-m}\gamma_m(v_n)-\gamma_{m+1}(v_t),
\]
which is \eqref{eq:jet-curl}. The subcomplex statement follows by stacking, and $D_1D_0^{\rm br}=0$ holds on each triangle.
\end{proof}

The factor is as sparse as in the $r=0$ case: block $m$ of the vector functionals depends only on blocks $m$ and $m+1$ of the scalar functionals. If the smoothness functionals are stored in the coefficient form \eqref{eq:LS-smoothness} rather than the derivative form, the factor changes by the row-basis matrices of Proposition~\ref{prop:row-compression} below and remains edge local. If a transverse direction other than $n_e=Rt_e$ is used, only the sign of the identity blocks in \eqref{eq:jet-curl} changes.

The theorem establishes the subcomplex property of \eqref{eq:strong-profile} for every $r\ge1$ and $d\ge r+1$, and Theorem~\ref{thm:constraint-commute} covers $r=0$. Exactness is a property of the triangulation, and the row-space test gives no information about it. For $r\ge1$ the dimension of $S^r_d(\tri)$ depends on the geometry when $d<3r+2$, through singular vertices and near-singular configurations \cite[Chap.~9]{LaiSchumaker2007}, so the ranks of the smoothness matrices, and with them the cohomology of \eqref{eq:strong-profile}, can change under a perturbation of the vertices. Exact smooth sequences may then require macro-refinements, supersmoothness at vertices, or a modified terminal space, as the Powell--Sabin sequence below shows \cite{GuzmanLischkeNeilan2020,GuzmanLischkeNeilan2022}. The profile $S^1_5\to[S^0_4]^2\to S^{-1}_3$ used in the numerical comparison of Section~\ref{sec:three-realizations-numerical} is a subcomplex by the theorem and is used there to test the representation of a $C^1$ smoothness matrix; its exactness is a separate question that the comparison does not need.

\subsection{The Powell--Sabin exact complex in broken coefficients}\label{sec:ps-complex}

For an exact smooth example we use the lowest-order Powell--Sabin sequence of Guzm\'an, Lischke and Neilan \cite{GuzmanLischkeNeilan2020}. Let $\tri_{\rm PS}$ be the Powell--Sabin refinement of a triangulation $\tri$ of a simply connected domain, obtained by splitting each triangle into six subtriangles. Their global sequence is
\begin{equation}\label{eq:ps-complex}
   \R\hookrightarrow S^1_2(\tri_{\rm PS})\xrightarrow{\rot}L^1_1(\tri_{\rm PS})\xrightarrow{\divv}V^2_0(\tri_{\rm PS})\to0,
\end{equation}
where $S^1_2(\tri_{\rm PS})$ is the classical $C^1$ quadratic Powell--Sabin space, $L^1_1$ is the space of continuous piecewise linear vector fields on $\tri_{\rm PS}$, and $V^2_0$ is the space of piecewise constants satisfying the singular-vertex condition of \cite[Sec.~5]{GuzmanLischkeNeilan2020}. Exactness on simply connected domains is \cite[Thm.~5.4]{GuzmanLischkeNeilan2020}.

\paragraph{The refinement.}
For each triangle $T$ of $\tri$ we take the split point $z_T$ at the incenter. For two triangles sharing an edge, the segment joining their incenters crosses that edge at a point $z_e$; on a boundary edge $z_e$ is the midpoint. Joining $z_T$ to the three vertices of $T$ and to the three points $z_e$ gives the six subtriangles. Incenters are a standard admissible choice \cite[Sec.~1]{GuzmanLischkeNeilan2020}. The points $z_e$ on interior edges are singular vertices, since the four subedges meeting there lie on two lines, and this is what the terminal space uses.

\paragraph{The smoothness functionals.}
Let $e$ be an interior edge of $\tri_{\rm PS}$ with a fixed transverse unit vector $n_e$. For a quadratic $q$ with coefficients $c$ on a subtriangle, the coefficient vector of $q|_e$ is the vector of coefficients at the three domain points on $e$, and the coefficient vector of $(D_{n_e}q)|_e$ is, by \eqref{eq:bern-deriv},
\begin{equation}\label{eq:ps-normal-derivative}
   \bigl(\gamma_1(q)\bigr)_\beta=2\sum_{i=1}^3(D_{n_e}\lambda_i)\,c_{\beta+e_i},
\end{equation}
for the two multi-indices $\beta$ of degree one on $e$. The scalar block $J_{0,e}$ is the jump of these five coefficients across $e$; equality of the quadratic trace already gives equality of its tangential derivative, so equality of one transverse derivative gives equality of the gradient, and $\ker J_0=S^1_2(\tri_{\rm PS})$. This is the derivative form of \eqref{eq:LS-smoothness} with $r=1$. For a piecewise linear vector field the block $J_{1,e}$ is the componentwise jump of the two coefficients on $e$, and $\ker J_1=L^1_1(\tri_{\rm PS})$. For an interior singular point $z$ with the four incident subtriangles $T_1,\dots,T_4$ in cyclic order, the functional
\begin{equation}\label{eq:ps-J2}
   \theta_z(q)=q|_{T_1}-q|_{T_2}+q|_{T_3}-q|_{T_4}
\end{equation}
on piecewise constants gives, stacked over singular points, the matrix $J_2$ with $\ker J_2=V^2_0(\tri_{\rm PS})$ \cite[Sec.~5, Rem.~5.1]{GuzmanLischkeNeilan2020}. The three kernels are therefore the three spaces of \eqref{eq:ps-complex}, and not merely spaces of the same dimensions.

With $\mathsf R_{\rm PS}$ and $\mathsf D_{\rm PS}$ the broken rotated gradient and divergence matrices, the two compatibility relations are tested by
\begin{equation}\label{eq:ps-tests}
   J_1\mathsf R_{\rm PS}(I-J_0^+J_0)=0,\qquad J_2\mathsf D_{\rm PS}(I-J_1^+J_1)=0,
\end{equation}
and exactness by comparing ranks. Section~\ref{sec:ps-numerics} reports these checks; they confirm that the implementation reproduces the known exact sequence.

\begin{remark}[partially smooth vector spaces]
The two frame components of a vector spline may be given different orders of transverse smoothness across an edge. Such profiles are tested by Theorem~\ref{thm:rowspace}. They are related in spirit to partially discontinuous nodal elements \cite{HuHuZhang2022}, expressed here as smoothness functionals on B-coefficients, so that they combine with any scalar spline conditions in the same storage.
\end{remark}

\section{The constrained Galerkin problem and its realizations}\label{sec:variational}

\subsection{The constrained equations}

Let $\widetilde V_h\cong\R^N$ be a broken coefficient space, $A\in\R^{N\times N}$ the assembled broken Galerkin matrix, $f\in\R^N$ the load vector, and $Cc=g$, $C\in\R^{m\times N}$, the collection of smoothness and boundary equations. The rows of the smoothness matrix $C$ are allowed to be dependent, and we assume only that the affine set $\mathcal K_g=\{c:Cc=g\}$ is nonempty. The constrained Galerkin problem is
\begin{equation}\label{eq:conforming-abstract}
   c\in\mathcal K_g,\qquad v^T(Ac-f)=0\quad\forall v\in\ker C,
\end{equation}
which for $g=0$ is Galerkin projection onto $\ker C$. In matrix form,
\begin{equation}\label{eq:KKT}
   \begin{bmatrix}A&C^T\\C&0\end{bmatrix}\begin{bmatrix}c\\\lambda\end{bmatrix}=\begin{bmatrix}f\\g\end{bmatrix}.
\end{equation}
This is the constrained B-form system of \cite[Sec.~3]{AwanouLaiWenston2006}; saddle-point linear algebra is reviewed in \cite{BenziGolubLiesen2005}. The collocation method of \cite{LaiLee2022} replaces the Galerkin equations $Ac=f$ by the strong form evaluated at collocation points, keeping the same constraint rows $Cc=g$, and solves the constrained least-squares problem; the realizations of Section~\ref{sec:three-realizations} apply to that system as well.

\begin{theorem}[equivalence with the conforming method]\label{thm:kkt-equivalence}
Let the broken matrix $A$ be symmetric and $\mathcal K_g$ nonempty. Then $c$ solves \eqref{eq:conforming-abstract} if and only if there is a multiplier $\lambda$ with $(c,\lambda)$ solving \eqref{eq:KKT}. If $v^TAv>0$ for every nonzero $v\in\ker C$, then $c$ is unique. If the rows of $C$ are dependent, the multiplier is not unique and the block matrix in \eqref{eq:KKT} is singular in the corresponding multiplier directions.
\end{theorem}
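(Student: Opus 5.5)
The whole argument rests on one fact from finite-dimensional linear algebra: $\ker C$ and $\range C^T$ are orthogonal complements in $\R^N$. Equivalently, a vector $w$ annihilates $\ker C$ if and only if $w\in\range C^T$; this is the case $A=C$, $B=I$ of the kernel/row-space duality already used in the proof of Theorem~\ref{thm:rowspace}.

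\textbf{Step 1: the equivalence.} If $(c,\lambda)$ solves \eqref{eq:KKT}, the second block row gives $c\in\mathcal K_g$. The first block row gives $Ac-f=-C^T\lambda$. For every $v\in\ker C$ we then have $v^T(Ac-f)=-(Cv)^T\lambda=0$, so $c$ solves \eqref{eq:conforming-abstract}. Conversely, suppose $c$ solves \eqref{eq:conforming-abstract}. Then $w=f-Ac$ is orthogonal to $\ker C$, so $w\in(\ker C)^\perp=\range C^T$. Hence $w=C^T\lambda$ for some $\lambda$, and $(c,\lambda)$ satisfies both rows. Only the consistency assumption $\mathcal K_g\ne\emptyset$ enters; symmetry of $A$ is not really needed here.

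\textbf{Step 2: uniqueness of $c$.} Let $c_1,c_2$ both solve \eqref{eq:conforming-abstract}. Then $\delta=c_1-c_2\in\ker C$, and $v^TA\delta=0$ for all $v\in\ker C$. Taking $v=\delta$ gives $\delta^TA\delta=0$, and positivity on $\ker C$ forces $\delta=0$. This works whether or not $A$ is invertible on the whole broken space.

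\textbf{Step 3: the multiplier.} If the rows of $C$ are dependent, there is a nonzero $\mu$ with $C^T\mu=0$. Then $(0,\mu)$ lies in the kernel of the block matrix in \eqref{eq:KKT}, so that matrix is singular. Moreover, $(c,\lambda+t\mu)$ solves \eqref{eq:KKT} for every $t$, so the multiplier is not unique. Under the positivity hypothesis the converse also holds: any kernel vector $(x,y)$ has $Cx=0$ and $x^TAx=-x^TC^Ty=-(Cx)^Ty=0$, hence $x=0$ and $C^Ty=0$. So the singular directions are exactly $\{0\}\times\ker C^T$.

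None of these steps is a real obstacle. The only point needing care is to state the orthogonal-complement identity over the full broken space without assuming that $C$ has full row rank, which is exactly what allows redundant smoothness rows.
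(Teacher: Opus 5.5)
Your proof is correct and follows the paper's own argument step for step: $(\ker C)^\perp=\range C^T$ gives the equivalence, $w^TAw=0$ gives uniqueness, and $(0,\eta)$ with $\eta\in\ker C^T$ is a null vector. You also add two small refinements the paper omits, namely that symmetry of $A$ is never used and that under positivity the null space of the block matrix is exactly $\{0\}\times\ker C^T$. One harmless slip: your annihilator fact is the duality of Theorem~\ref{thm:rowspace} with $B=w^T$, not $B=I$.
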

\begin{proof}
If \eqref{eq:KKT} holds then $c\in\mathcal K_g$ and $v^T(Ac-f)=-(Cv)^T\lambda=0$ for $v\in\ker C$. Conversely, if \eqref{eq:conforming-abstract} holds then $Ac-f\perp\ker C=(\range C^T)^\perp$, so $Ac-f=-C^T\lambda$ for some $\lambda$. Two primal solutions differ by $w\in\ker C$ with $w^TAw=0$, so $w=0$ under the positivity condition. If $0\ne\eta\in\ker C^T$ then $(0,\eta)$ is a null vector of the block matrix.
\end{proof}

Two sources of singularity occur in spline computations and should be kept apart. The broken matrix $A$ is often singular because the elementwise energy has polynomial null modes: constants for the Poisson stiffness, gradients for the curl--curl matrix, linears for the Hessian energy. Separately, dependent rows of the smoothness matrix make $C^T$ noninjective. Awanou, Lai and Wenston allow both, and their convergence theorem assumes that $A$ is nonnegative and positive definite with respect to the constraints, that is $x^TAx=0$ and $Cx=0$ imply $x=0$ \cite[Thm.~6]{AwanouLaiWenston2006}. For the mathematical statements we keep the full smoothness matrix. For a direct factorization one may replace $C$ by a matrix $\bar C$ with full row rank and the same row space; then $\ker\bar C=\ker C$, the primal solution is unchanged, and only redundant multiplier coordinates are removed. This row compression is not a null-space reduction and eliminates no B-coefficient.

\begin{proposition}[row-basis invariance]\label{prop:row-compression}
Let $\bar C_k$ have the same row space as $C_k$ in every slot. Then $\ker\bar C_k=\ker C_k$, so the spaces and their cohomology are unchanged, and if $C_{k+1}D_k=B_kC_k$ then $\bar C_{k+1}D_k=\bar B_k\bar C_k$ for some matrix $\bar B_k$, which is in general less local because the row operations that form $\bar C_k$ mix functionals of different edges.
\end{proposition}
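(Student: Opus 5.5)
The plan is to reduce everything to the finite-dimensional fact already used in the proof of Theorem~\ref{thm:rowspace}: two matrices with the same number of columns have the same kernel if and only if they have the same row space. Since $\range\bar C_k^T=\range C_k^T$, taking orthogonal complements gives $\ker\bar C_k=(\range\bar C_k^T)^\perp=(\range C_k^T)^\perp=\ker C_k$. The spaces $V_h^k$ are then literally the same subspaces of the broken coefficient spaces. The maps $D_k$ are unchanged, so the restricted operators, their kernels and ranges, and hence the cohomology quotients $\ker D_{k+1,h}/\range D_{k,h}$ are unchanged as well.

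For the factorization I would avoid any rank or basis bookkeeping and argue directly through the equivalence (i)$\Leftrightarrow$(iii) of Theorem~\ref{thm:rowspace}.
\begin{itemize}
\item From $C_{k+1}D_k=B_kC_k$ we get $D_k(\ker C_k)\subset\ker C_{k+1}$.
\item By the kernel equalities just shown, this reads $D_k(\ker\bar C_k)\subset\ker\bar C_{k+1}$.
\item Theorem~\ref{thm:rowspace} then produces a matrix $\bar B_k$ with $\bar C_{k+1}D_k=\bar B_k\bar C_k$.
\end{itemize}
For a concrete formula, write $\bar C_{k+1}=P_{k+1}C_{k+1}$ and $C_k=Q_k\bar C_k$. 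Such matrices exist because each row of one matrix is a combination of the rows of the other, and they can be taken as $P_{k+1}=\bar C_{k+1}C_{k+1}^+$ and $Q_k=C_k\bar C_k^+$. Then $\bar B_k=P_{k+1}B_kQ_k$ works, since
\[
\bar C_{k+1}D_k=P_{k+1}C_{k+1}D_k=P_{k+1}B_kC_k=P_{k+1}B_kQ_k\bar C_k .
\]
The identities $\bar C_{k+1}=\bar C_{k+1}C_{k+1}^+C_{k+1}$ and $C_k=C_k\bar C_k^+\bar C_k$ hold because $C^+C$ is the orthogonal projector onto the row space, and the two row spaces coincide.

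The locality remark is qualitative. $B_k$ is block diagonal by edges, but $P_{k+1}$ and $Q_k$ are only as local as the row operations that form $\bar C$. When $\bar C$ is obtained, for instance, by a global pivoted QR that mixes functionals of edges sharing a vertex, these factors couple edges, so $\bar B_k$ has no reason to be edge-block diagonal. I would state this with a short explanation rather than a counterexample, since the statement only says ``in general''.

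No step is a real obstacle. The only point needing care is justifying that the factor matrices $P_{k+1}$ and $Q_k$ exist when neither matrix has full row rank in the case of $C$, and the pseudoinverse formulas above handle that case uniformly.
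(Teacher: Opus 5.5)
Your proof is correct and is essentially the paper's argument: the paper writes $\bar C_{k+1}=R_{k+1}C_{k+1}$ and $C_k=L_k\bar C_k$ and takes $\bar B_k=R_{k+1}B_kL_k$, which is your $P_{k+1}B_kQ_k$. Your pseudoinverse choices $P_{k+1}=\bar C_{k+1}C_{k+1}^+$ and $Q_k=C_k\bar C_k^+$ simply make the existence of these factors explicit, and your route through Theorem~\ref{thm:rowspace} is a valid but redundant second argument.
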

\begin{proof}
There are matrices $R_k,L_k$ with $\bar C_k=R_kC_k$ and $C_k=L_k\bar C_k$, so $\bar C_{k+1}D_k=R_{k+1}B_kL_k\bar C_k$.
\end{proof}

This is why compatibility is proved on the raw edge functionals, where the factor is block diagonal, and rows are compressed only as a solver preprocessing step. Two rank-revealing procedures are used in the experiments: a sparse elimination in assembly order with pivot tolerance $2\times10^{-11}$ for the $C^1$, $d=5$ comparison, and QR with column pivoting on $C^T$ for the Powell--Sabin biharmonic problems. Discarded rows are always checked through the full residual $Cc$.

\subsection{Three realizations of the same equations}\label{sec:three-realizations}

\paragraph{The null-space method.}
Choose a particular $c_p$ with $Cc_p=g$ and a matrix $Z$ of full column rank with $\range Z=\ker C$. Every admissible coefficient vector is $c=c_p+Z\hat c$, and \eqref{eq:conforming-abstract} becomes
\begin{equation}\label{eq:Zreduced}
   Z^TAZ\,\hat c=Z^T(f-Ac_p).
\end{equation}
This is the null-space method for \eqref{eq:KKT} \cite[Sec.~6]{BenziGolubLiesen2005}. In the language of \cite[Chap.~5]{LaiSchumaker2007}, the columns of the null-space matrix $Z$ extend the coefficients on a determining set to all B-coefficients, and a stable local minimal determining set corresponds to a sparse $Z$ with local support. In the experiments $Z$ is built by locality-oriented sparse rank-revealing elimination of the smoothness equations, not by a dense factorization. The reduced matrix $Z^TAZ$ is symmetric positive definite when $A$ is positive definite on $\ker C$, and its size is the dimension of the spline space.

\begin{proposition}[the reduced complex]\label{prop:z-complex}
Let $Z_k$ have $\range Z_k=\ker C_k$ in every slot of a compatible constraint complex. Then there are matrices $\widehat D_k$ with
\begin{equation}\label{eq:z-commute}
   D_kZ_k=Z_{k+1}\widehat D_k,
\end{equation}
and $\widehat D_{k+1}\widehat D_k=0$ whenever $D_{k+1}D_k=0$. The reduced sequence is therefore the same discrete complex in the coordinates of the determining sets, and its cohomology is that of the constraint complex.
\end{proposition}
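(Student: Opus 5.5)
The plan is to build $\widehat D_k$ column by column from the inclusion $D_k(\ker C_k)\subset\ker C_{k+1}$, then transfer the relation $D_{k+1}D_k=0$ by cancelling $Z_{k+2}$, and finally compare cohomology by noting that $Z_k$ is a linear isomorphism onto the constraint-complex space.

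First, existence of $\widehat D_k$. Compatibility of the constraint complex means, by Theorem~\ref{thm:rowspace}(i), that $D_k(\ker C_k)\subset\ker C_{k+1}$. Each column of $Z_k$ lies in $\range Z_k=\ker C_k$, so each column of $D_kZ_k$ lies in $\ker C_{k+1}=\range Z_{k+1}$. Hence every column of $D_kZ_k$ can be written as $Z_{k+1}$ times some coefficient vector. Collecting these coefficient vectors gives $\widehat D_k$ with \eqref{eq:z-commute}. Since $Z_{k+1}$ has full column rank, as in the null-space method, this $\widehat D_k$ is unique. Explicitly, $\widehat D_k=Z_{k+1}^+D_kZ_k$, with $Z_{k+1}^+$ a left inverse of $Z_{k+1}$.

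Second, the complex property. Applying \eqref{eq:z-commute} twice gives
\[
Z_{k+2}\widehat D_{k+1}\widehat D_k=D_{k+1}Z_{k+1}\widehat D_k=D_{k+1}D_kZ_k=0 .
\]
Injectivity of $Z_{k+2}$ (full column rank) then forces $\widehat D_{k+1}\widehat D_k=0$. In the last slot, where no smoothness matrix is present, I take $Z=I$.

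Third, cohomology. The map $\hat c\mapsto Z_k\hat c$ is an isomorphism from the reduced coordinate space onto $\ker C_k=V_h^k$. By \eqref{eq:z-commute} these maps form a chain map from the reduced sequence to the constraint complex. A chain map whose components are all isomorphisms is a chain isomorphism, so it identifies $\ker\widehat D_k$ with $\ker D_{k,h}$ and $\range\widehat D_{k-1}$ with $\range D_{k-1,h}$, and hence identifies the quotients.

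The only delicate point is that the statement assumes only $\range Z_k=\ker C_k$, while the argument needs full column rank of $Z_k$. If that is not assumed, $\widehat D_k$ still exists but may be non-unique, and the vanishing $\widehat D_{k+1}\widehat D_k=0$ can fail. I would therefore invoke the full-column-rank convention fixed for $Z$ in the null-space method above. Once that convention is in place, everything else is routine linear algebra.
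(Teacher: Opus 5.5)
Your proposal is correct and follows essentially the same argument as the paper. You obtain $\widehat D_k$ from $D_kZ_k\subset\ker C_{k+1}=\range Z_{k+1}$, with uniqueness from the full column rank of $Z_{k+1}$. You then cancel $Z_{k+2}$ in $Z_{k+2}\widehat D_{k+1}\widehat D_k=D_{k+1}D_kZ_k=0$, and identify cohomology slotwise through the isomorphisms $\hat c\mapsto Z_k\hat c$. You are right that full column rank is genuinely needed. The paper's proof also uses it, taking it from the convention for $Z$ fixed in the null-space method.
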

\begin{proof}
Compatibility gives $D_kZ_k\subset\ker C_{k+1}=\range Z_{k+1}$, and $Z_{k+1}$ has full column rank, so $\widehat D_k$ exists and is unique. Then $Z_{k+2}\widehat D_{k+1}\widehat D_k=D_{k+1}D_kZ_k=0$ gives $\widehat D_{k+1}\widehat D_k=0$, and the isomorphisms $\hat c\mapsto Z_k\hat c$ identify kernels and ranges slotwise.
\end{proof}

\paragraph{Augmented Lagrangian iteration.}
Set $E_\eps=A+\eps^{-1}C^TC$. Awanou, Lai and Wenston eliminate the multiplier from their augmented saddle iteration and obtain, from an initial multiplier $\lambda^{(0)}$,
\begin{equation}\label{eq:alw}
   c^{(1)}=E_\eps^{-1}\bigl(f+\eps^{-1}C^Tg-C^T\lambda^{(0)}\bigr),\qquad
   c^{(m+1)}=E_\eps^{-1}\bigl(Ac^{(m)}+\eps^{-1}C^Tg\bigr),\quad m\ge1,
\end{equation}
which are the eliminated forms of their equations (12)--(13) \cite[Algorithm~5]{AwanouLaiWenston2006}. Under the relative positivity condition above, $E_\eps$ is invertible for every $\eps>0$ and the iteration converges linearly \cite[Thm.~6]{AwanouLaiWenston2006}, with a contraction factor that decreases with $\eps$ and is analyzed in \cite{AwanouLai2005}; the price of a small $\eps$ is the conditioning of $E_\eps$, which the experiments below measure. At a finite iterate $Cc^{(m)}\ne g$ in general, and the identity $C_{k+1}D_kc^{(m)}=B_kC_kc^{(m)}$ shows how the constraint residual is transported into the differentiated slot. Since $E_\eps$ contains $C^TC$, duplicating or rescaling a row of $C$ leaves the constraint set unchanged but changes the iteration and its conditioning.

\paragraph{Direct solution of the saddle system.}
The third realization keeps the broken coefficients and solves \eqref{eq:KKT} directly, after compressing redundant multiplier rows for the factorization. It constructs no null-space matrix and introduces no parameter, at the price of an indefinite bordered system. Its advantage in a complex is that no $Z_k$ has to be built and kept compatible in every slot, which is what the eigenvalue and surface experiments below use.

\subsection{An interface system for semidefinite element matrices}

Suppose the broken matrix $A=\diag_TA_T$ is symmetric positive semidefinite and let the columns of $R_T$ span $\ker A_T$. Set $A^+=\diag_TA_T^+$ with the Moore--Penrose pseudoinverse, $R=\diag_TR_T$, $H=CA^+C^T$ and $G=CR$.

\begin{proposition}[interface system]\label{prop:pseudoinverse}
After lifting nonzero data, the homogeneous problem $Ac+C^T\lambda=f$, $Cc=0$ is equivalent to
\begin{equation}\label{eq:HG}
   \begin{bmatrix}H&G\\G^T&0\end{bmatrix}\begin{bmatrix}\lambda\\\beta\end{bmatrix}
   =\begin{bmatrix}CA^+f\\R^Tf\end{bmatrix},\qquad c=A^+(f-C^T\lambda)-R\beta .
\end{equation}
Redundant rows of $C$ give null directions in the multiplier block, and a row basis may be used for factorization without changing $c$.
\end{proposition}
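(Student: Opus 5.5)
The plan is to decompose $c$ along $\range A^+ = (\ker A)^\perp$ and $\ker A=\range R$, using the block structure. Since $A=\diag_TA_T$ is symmetric positive semidefinite, $\R^N=\range A\oplus\ker A$ orthogonally. In addition, $AA^+=A^+A$ is the orthogonal projector onto $\range A$, and $I-AA^+$ is the projector onto $\ker A=\range R$. Without loss of generality I take the columns of each $R_T$ orthonormal, so that $I-AA^+=RR^T$. This is harmless, since otherwise $R^T$ is replaced by an invertible recombination and \eqref{eq:HG} changes only by a change of variable in $\beta$.

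\emph{Forward direction.} Let $(c,\lambda)$ solve $Ac+C^T\lambda=f$, $Cc=0$. The first equation says $f-C^T\lambda\in\range A$, which is equivalent to $R^T(f-C^T\lambda)=0$, that is, $G^T\lambda=R^Tf$; this is the second block row of \eqref{eq:HG}. Applying $A^+$ to the first equation gives $A^+Ac=A^+(f-C^T\lambda)$. Split $c=A^+Ac+(I-A^+A)c$ and write the kernel part as $-R\beta$ with $\beta=-R^Tc$. This yields $c=A^+(f-C^T\lambda)-R\beta$. Substituting into $Cc=0$ gives $CA^+f-H\lambda-G\beta=0$. This is the first block row up to a sign convention, so I will match signs carefully, and if necessary replace $\beta$ by $-\beta$ or read the first row as $H\lambda+G\beta=CA^+f$.

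\emph{Converse.} Given a solution $(\lambda,\beta)$ of \eqref{eq:HG}, define $c$ by the stated formula. Then $Cc=CA^+f-H\lambda-G\beta=0$ by the first row. For the first equation, $AR=0$ gives $Ac=AA^+(f-C^T\lambda)$, and the second row says $R^T(f-C^T\lambda)=0$, so $f-C^T\lambda\in\range A$ and $AA^+$ acts as the identity on it. Hence $Ac+C^T\lambda=f$. The lifting of nonzero $g$ is the usual step: pick $c_p$ with $Cc_p=g$ and replace $f$ by $f-Ac_p$.

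\emph{Redundant rows.} If $\eta\in\ker C^T$, then $H\eta=0$ and $G^T\eta=R^TC^T\eta=0$, so $(\eta,0)$ is a null vector of the block matrix. Moreover, $c$ depends on $\lambda$ only through $C^T\lambda$, so $c$ is unaffected. Replacing $C$ by a row basis $\bar C=R_0C$ leaves $\ker C$ and $\range C^T$ unchanged, and by Theorem~\ref{thm:kkt-equivalence} the primal solution is unchanged as well. The main obstacle is bookkeeping rather than depth: the sign convention in the first block row and the normalization of $R$ have to be consistent so that $(I-A^+A)c$ is exactly representable as $-R\beta$ with $\beta$ entering the first row through $G$. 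Everything else follows from $A^+$ being the exact inverse on $\range A$.
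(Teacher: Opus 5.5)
Your proof is correct and follows the paper's route: the solvability condition $R^T(f-C^T\lambda)=0$ is the second block row, and substituting the general solution $c=A^+(f-C^T\lambda)-R\beta$ into $Cc=0$ gives the first; you also check the converse and the redundant-row claim, which the paper's proof leaves implicit. There is no sign issue to resolve, since $CA^+f-H\lambda-G\beta=0$ is exactly the first row $H\lambda+G\beta=CA^+f$, and orthonormalizing $R$ is unnecessary because $(I-A^+A)c\in\ker A=\range R$ already provides $\beta$.
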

\begin{proof}
The elementwise equation $Ac=f-C^T\lambda$ is solvable if and only if $R^T(f-C^T\lambda)=0$, that is $G^T\lambda=R^Tf$, and its general solution is $c=A^+(f-C^T\lambda)-R\beta$. Substituting into $Cc=0$ gives the first row of \eqref{eq:HG}.
\end{proof}

The interface matrix $H$ is assembled from the element contributions $C_TA_T^+C_T^T$, and the coarse block $G$ carries the element null modes. When every $A_T$ is invertible the system reduces to $CA^{-1}C^T\lambda=CA^{-1}f$, which is positive definite for a row basis of $C$ if $A$ is positive definite. For shifted Maxwell problems the element matrices are positive definite; for the time-harmonic form they can be indefinite and the interface system must be treated as such.

\subsection{A constrained Maxwell pencil}

Let $K$ and $M$ be the broken curl--curl and mass matrices and let $C$ impose tangential continuity and the perfect conductor condition. Throughout this subsection $C$ is a row basis of the raw smoothness matrix, so it has full row rank; before compression the dependent rows make the bordered matrix singular in the multiplier directions, and the pencil below is regular only after compression. The conforming eigenproblem is
\begin{equation}\label{eq:maxeig-conf}
   u_h\in\ker C\setminus\{0\},\qquad v^TKu_h=\lambda_hv^TMu_h\quad\forall v\in\ker C .
\end{equation}

\begin{theorem}[constrained pencil]\label{thm:eigen-pencil}
Let $\mathcal A=\begin{bmatrix}K&C^T\\C&0\end{bmatrix}$ and $\mathcal M=\begin{bmatrix}M&0\\0&0\end{bmatrix}$. Every finite eigenvalue of $\mathcal Ax=\lambda\mathcal Mx$ with nonzero primal component is an eigenvalue of \eqref{eq:maxeig-conf}, and every eigenvalue of \eqref{eq:maxeig-conf} is a finite eigenvalue of the pencil. If the broken mass matrix $M$ is positive definite, then $\ker\mathcal M=\{0\}\times\R^m$; since $C$ has full row rank the pencil is regular, and the multiplier directions contribute only infinite eigenvalues.
\end{theorem}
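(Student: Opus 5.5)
The proof works directly with the block equations of the pencil. A vector $x=(u,\mu)$ with $\mathcal Ax=\lambda\mathcal Mx$ satisfies
\[
   Ku+C^T\mu=\lambda Mu,\qquad Cu=0 .
\]

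\emph{Pencil to conforming problem.} Take a finite eigenpair with $u\ne0$. The second block row gives $u\in\ker C$. For any $v\in\ker C$, multiply the first row by $v^T$; since $v^TC^T\mu=(Cv)^T\mu=0$, this leaves $v^TKu=\lambda v^TMu$. So $(\lambda,u)$ is an eigenpair of \eqref{eq:maxeig-conf}, by the same manipulation as in the first half of Theorem~\ref{thm:kkt-equivalence}.

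\emph{Conforming problem to pencil.} Let $u_h\in\ker C\setminus\{0\}$ solve \eqref{eq:maxeig-conf}. Then the residual $Ku_h-\lambda_hMu_h$ is orthogonal to $\ker C=(\range C^T)^\perp$, so it lies in $\range C^T$. Hence there is a $\mu$ with $Ku_h-\lambda_hMu_h=-C^T\mu$. The vector $x=(u_h,\mu)$ then satisfies both block rows, and it is nonzero because $u_h\ne0$. This is the converse argument of Theorem~\ref{thm:kkt-equivalence}, with $A=K-\lambda_hM$ and $f=0$.

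\emph{Structure of the pencil.} If $M$ is positive definite, then $\mathcal M(u,\mu)=(Mu,0)$ vanishes exactly when $u=0$, so $\ker\mathcal M=\{0\}\times\R^m$.

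\emph{Regularity.} It suffices to find one $\lambda$ with $\det(\mathcal A-\lambda\mathcal M)\ne0$. I would show that $\mathcal A-\lambda\mathcal M$ is injective for any $\lambda$ that is not an eigenvalue of \eqref{eq:maxeig-conf}; there are finitely many such eigenvalues, because \eqref{eq:maxeig-conf} is a finite-dimensional symmetric-definite problem, as $M$ is positive definite on $\ker C$.

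Suppose $(K-\lambda M)u+C^T\mu=0$ and $Cu=0$. Testing the first equation with $v\in\ker C$ shows that $u$ solves \eqref{eq:maxeig-conf} with parameter $\lambda$. Since $\lambda$ is not an eigenvalue, $u=0$. Then $C^T\mu=0$, and full row rank of $C$ gives $\mu=0$. So the matrix is injective and the pencil is regular.

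\emph{The multiplier directions.} A nonzero $x\in\ker\mathcal M$ must be of the form $x=(0,\mu)$ with $\mu\ne0$. The infinite eigenvalues of the pencil are the eigenvalues of the reversed pencil $\mathcal M x=\nu\mathcal A x$ at $\nu=0$, and the corresponding eigenvectors are exactly $\ker\mathcal M$. So the multiplier directions account for the eigenvalue at infinity.

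I also need to check that these directions produce no finite eigenvalue. If $\mathcal Ax=\lambda\mathcal Mx$ with $x=(0,\mu)$, then $\mathcal Mx=0$, so $\mathcal Ax=(C^T\mu,0)=0$, and full row rank forces $\mu=0$, a contradiction. Hence every finite eigenvector has nonzero primal component, and the hypothesis ``with nonzero primal component'' in the first claim is automatic once $C$ is compressed.

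To make the count precise, I would use the Kronecker form. The generalized eigenvalues of a regular pencil of size $N+m$ are the $N_f$ finite ones together with $\infty$ of algebraic multiplicity $N+m-N_f$. Counting through the null-space reduction with $Z$ from Section~\ref{sec:three-realizations}, the pencil is equivalent to $Z^TKZ-\lambda Z^TMZ$ together with a purely infinite block. This gives $N_f=\dim\ker C=N-m$, so the infinite part has multiplicity $2m$ and carries all the multiplier directions.

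\emph{Main obstacle.} The delicate point is the multiplicity bookkeeping for the infinite eigenvalue, where the index-two Jordan chains pair each $(0,\mu)$ with a primal vector $(u,0)$, $u\in\range M^{-1}C^T$. I would handle it by the explicit block elimination above rather than by an abstract argument.
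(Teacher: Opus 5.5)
Your proposal is correct and follows the paper's proof for the two-way eigenvalue correspondence: testing the first block row with $v\in\ker C$ removes the multiplier, and $(\ker C)^\perp=\range C^T$ supplies $\mu$ for the converse. Where the paper only asserts regularity and cites the Weierstrass form for the infinite part, you prove it, via injectivity of $\mathcal A-\lambda\mathcal M$ for $\lambda$ outside the finite spectrum of \eqref{eq:maxeig-conf} together with full row rank of $C$, and you check directly that no $(0,\mu)$ can be a finite eigenvector. That fills in the step the paper leaves implicit; your count of $N-m$ finite and $2m$ infinite eigenvalues is extra and is not needed for the statement.
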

\begin{proof}
For $x=(u,\mu)$ the pencil reads $Ku+C^T\mu=\lambda Mu$, $Cu=0$; multiplying by $v^T$ with $v\in\ker C$ removes the multiplier and gives \eqref{eq:maxeig-conf}. Conversely, if $u$ solves \eqref{eq:maxeig-conf} then $Ku-\lambda Mu\in(\ker C)^\perp=\range C^T$, which gives $\mu$. The mass block vanishes on the multiplier variables, and for a regular pencil the vectors in $\ker\mathcal M$ belong to the infinite generalized part, as the Weierstrass canonical form of the pair $(\mathcal A,\mathcal M)$ shows.
\end{proof}

For a shift $\sigma$ that is not a finite eigenvalue, the shift-invert operator $\mathcal T_\sigma=(\mathcal A-\sigma\mathcal M)^{-1}\mathcal M$ maps a finite eigenpair $(\lambda,x)$ to the eigenvalue $(\lambda-\sigma)^{-1}$ and annihilates $\ker\mathcal M$. A sparse factorization of the bordered matrix $\mathcal A-\sigma\mathcal M$ therefore gives a shift-invert eigensolver for the constrained spectrum without a conforming basis; details are in Appendix~\ref{app:eigs}.

\section{Curved triangulated surfaces}\label{sec:surface}

\subsection{Surface maps and Piola transformations}

Let $\Gamma\subset\R^3$ be a smooth oriented surface and let $T\subset\Gamma$ be a curved triangle parameterized by a regular map $F_T:\wideT\to T$ from the reference triangle. Set $J=DF_T\in\R^{3\times2}$, $G=J^TJ$ and $J_\Gamma=\sqrt{\det G}$. The maps are assumed uniformly regular and compatible on shared edges, so the curved triangles form a conforming triangulation of a surface $\Gamma_h$; when $\Gamma_h=\Gamma$ the geometry is exact, and otherwise comparison with $\Gamma$ carries the geometric error analyzed in \cite{HolstStern2012,Licht2023}. Scalars pull back by composition, $\hat u=u\circ F_T$. Tangent vector fields transform by the covariant Piola map
\begin{equation}\label{eq:covariant}
   v\circ F_T=JG^{-1}\hat v,\qquad\text{equivalently}\qquad J^T(v\circ F_T)=\hat v,
\end{equation}
flux fields by the contravariant Piola map $w\circ F_T=J_\Gamma^{-1}J\hat w$, and densities by $q\circ F_T=J_\Gamma^{-1}\hat q$.

\subsection{Invariance of the smoothness functionals}

\begin{theorem}[tangential invariance]\label{thm:surface-trace}
Let $\hat e\subset\partial\wideT$ be parameterized by $\hat\gamma(t)$ and $e=F_T(\hat e)$ by $\gamma=F_T\circ\hat\gamma$. If $v$ and $\hat v$ are related by \eqref{eq:covariant}, then $(v\circ\gamma)\cdot\gamma'(t)=(\hat v\circ\hat\gamma)\cdot\hat\gamma'(t)$. Hence, when the two element maps sharing $e$ agree on $\hat e$, equality of the B-coefficients of the tangential components on the two reference triangles is equivalent to equality of the tangential trace on the curved edge, and the reference tangential functionals are inherited on the surface without change.
\end{theorem}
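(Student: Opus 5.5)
The identity is a chain-rule computation; the rest of the statement transfers it to the smoothness functionals through the assumption that neighboring element maps agree.

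\textbf{Pointwise identity.} Since $\gamma=F_T\circ\hat\gamma$, the chain rule gives
\[
\gamma'(t)=J(\hat\gamma(t))\,\hat\gamma'(t).
\]
Hence
\[
(v\circ\gamma)\cdot\gamma'=(v\circ F_T)(\hat\gamma)\cdot J\hat\gamma'=\bigl(J^T(v\circ F_T)\bigr)(\hat\gamma)\cdot\hat\gamma'=(\hat v\circ\hat\gamma)\cdot\hat\gamma',
\]
where the last step uses the second form of \eqref{eq:covariant}. Both forms of \eqref{eq:covariant} are consistent because $J^TJG^{-1}=I$. The only subtlety is that $v$ is a tangent field in $\R^3$ and the dot product is the ambient one, so nothing beyond $J^TJ=G$ is needed.

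\textbf{Consequence for shared edges.} Let $T^\pm$ share the curved edge $e$. Assume the maps agree on $\hat e$ in the sense that $F_{T^+}\circ\hat\gamma^+=F_{T^-}\circ\hat\gamma^-=\gamma$ for one common parameterization of $e$; if the two reference parameterizations run in opposite directions, apply the same reversal permutation used in the proof of Theorem~\ref{thm:constraint-commute}. Applying the pointwise identity on each side, the physical tangential traces $(v|_{T^\pm}\circ\gamma)\cdot\gamma'$ coincide as functions of $t$ if and only if the reference traces $(\hat v^\pm\circ\hat\gamma^\pm)\cdot\hat\gamma^{\pm\prime}$ coincide.

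\textbf{Passage to coefficients.} On the reference triangle, $\hat\gamma'$ is constant, equal to the edge vector. By \eqref{eq:edge-restriction} and the description after \eqref{eq:C1e}, the reference tangential trace is therefore a univariate polynomial of degree $d$ in $t$. Its Bernstein coefficients are a fixed combination of the coefficients of $\hat v$ at the edge domain points: $|\hat e|$ times the coefficients of the unit-tangent component. Linear independence of the univariate Bernstein basis turns equality of these polynomials into equality of their coefficient vectors. This gives $C^{\curl}_{1,e}\hat c=0$, with the reference functionals, possibly rescaled by the constant edge lengths.

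\textbf{The expected obstacle.} The tangent-vector normalization is where care is needed. The reference functionals are defined with the unit tangent, whereas the pointwise identity uses $\hat\gamma'$. If the two reference edges $\hat e^\pm$ have different lengths, for example a hypotenuse on one side and a leg on the other, the two sides carry different constant scalings. I will fix this by choosing a common reference parameter $t\in[0,1]$ on both sides, so that the constants $|\hat e^\pm|$ are absorbed into the functional; these are still reference data independent of the curved geometry. The resulting functional is again a fixed reference functional, which gives the claimed inheritance. In this way curvature enters only through $J$, and $J$ cancels.
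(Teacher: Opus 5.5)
Your proof is correct and follows the paper's route. The paper's proof consists only of the chain-rule computation $v\cdot\gamma'=v\cdot J\hat\gamma'=(J^Tv)\cdot\hat\gamma'=\hat v\cdot\hat\gamma'$ and treats the passage to B-coefficients as immediate. Your extra step is a legitimate sharpening of the ``without change'' claim rather than a different argument: you use a common parameter $t\in[0,1]$, so the reference functionals are built from the edge vectors $(\hat\gamma^\pm)'$ rather than unit tangents, which matters when $\hat e^\pm$ have different lengths.
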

\begin{proof}
$\gamma'=J\hat\gamma'$ and $J^Tv=\hat v$ give $v\cdot\gamma'=(J^Tv)\cdot\hat\gamma'=\hat v\cdot\hat\gamma'$.
\end{proof}

\begin{theorem}[flux invariance]\label{thm:surface-flux}
If $w$ and $\hat w$ are related by the contravariant map, then the co-normal flux density of $w$ times the physical line measure equals the normal flux density of $\hat w$ times the reference line measure. Hence the reference normal functionals enforce $H(\divv_\Gamma)$ conformity on the surface.
\end{theorem}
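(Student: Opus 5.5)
The plan is to mirror the proof of Theorem~\ref{thm:surface-trace}: reduce the flux statement to a pointwise algebraic identity along the edge, then read off the functional statement.

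Parameterize $\hat e$ by $\hat\gamma(t)$ and $e=F_T(\hat e)$ by $\gamma=F_T\circ\hat\gamma$, so $\gamma'=J\hat\gamma'$. The physical and reference line measures are then $|\gamma'|\dd t$ and $|\hat\gamma'|\dd t$.

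The first step defines the normals. On the reference triangle, let $\hat n=-R\hat\gamma'/|\hat\gamma'|$ be the outward unit normal; the sign depends on the orientation of $\hat\gamma$. On the surface, let $\nu=J_1\times J_2/J_\Gamma$ be the unit surface normal, where $J_1,J_2$ are the columns of $J$. Define the co-normal by $m=\gamma'\times\nu/|\gamma'|$, which is tangent to $\Gamma$, orthogonal to $e$, and outward with the matching sign convention.

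The key computation is to show that
\[
(J\hat w)\cdot(\gamma'\times\nu)=J_\Gamma\,\hat w\cdot(-R\hat\gamma').
\]
I would prove this as follows. Write $\gamma'\times\nu=J_\Gamma^{-1}\,(J\hat\gamma')\times(J_1\times J_2)$ and expand with the BAC--CAB rule. Alternatively, use the triple-product identity
\[
(Ja)\cdot\bigl((Jb)\times\nu\bigr)=\det[Ja,Jb,\nu]=J_\Gamma\det[a,b]
\]
for $a,b\in\R^2$. This holds because $\det[J_1,J_2,\nu]=|J_1\times J_2|=J_\Gamma$ and the map $(a,b)\mapsto\det[Ja,Jb,\nu]$ is bilinear and alternating. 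With $a=\hat w$ and $b=\hat\gamma'$, we get $\det[\hat w,\hat\gamma']=-\hat w\cdot R\hat\gamma'$ up to the fixed sign convention. Substituting $w\circ F_T=J_\Gamma^{-1}J\hat w$ makes the factor $J_\Gamma$ cancel, giving
\[
(w\circ\gamma)\cdot m\,|\gamma'|=(\hat w\circ\hat\gamma)\cdot\hat n\,|\hat\gamma'|.
\]
This is the asserted equality of flux densities times line measures.

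For the consequence, assume the two element maps sharing $e$ agree on $\hat e$. Then $\gamma$, $|\gamma'|$ and $\nu$ coincide from both sides, so single-valuedness of the co-normal flux on $e$ is equivalent to equality of $\hat w\cdot\hat n\,|\hat\gamma'|$ from both reference sides. Here $\hat n$ is taken with a common global orientation, which the reversal permutation handles as in Theorem~\ref{thm:constraint-commute}. For $\hat w$ polynomial this trace is a univariate polynomial of degree $d$. Linear independence of the Bernstein basis then turns this equality into the vanishing of the reference normal functionals $C^{\divv}_{1,e}$.

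Normal continuity of the Piola field on a conforming curved triangulation is the standard characterization of $H(\divv_\Gamma)$ for piecewise smooth fields. This follows by the surface divergence theorem on each curved triangle, the same argument as in Theorem~\ref{thm:conformity}.

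The main obstacle I expect is bookkeeping of orientations and the edge-dependent factor $|\hat\gamma'|$. On the reference triangle, the hypotenuse has $|\hat\gamma'|=\sqrt2$ while the legs have length $1$. The two neighbouring reference triangles may parameterize $\hat e$ with different speeds or opposite directions. I would fix this by taking $\hat\gamma$ affine on each side with the shared parameter induced by the common $\gamma$. Both sides then use the same scalar density per unit $t$, and the reference functionals should be understood with that common scaling, which the paper's "normal flux density times reference line measure" already encodes.
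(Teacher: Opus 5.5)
Your proof is correct and follows the paper's route. The paper likewise writes the co-normal line element as a cross product with the unit normal $N=(J_1\times J_2)/J_\Gamma$ and cancels the Jacobian through $N\cdot(J\hat a\times J\hat b)=J_\Gamma\det(\hat a,\hat b)$, which is your alternating-determinant identity. It uses the opposite but equally consistent sign for both normals ($N\times J\hat\tau$ and $R\hat\tau$), and it does not spell out the conformity consequence that you supply. Your observation that the reference functional must carry the non-unit normal $R\hat\gamma'$, and hence the edge-length factor $|\hat\gamma'|$, is exactly what the paper's choice $\hat\nu=R\hat\tau$ with $\hat\tau=\hat\gamma'$ encodes.
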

\begin{proof}
Let $N=(J_1\times J_2)/J_\Gamma$ be the unit normal, $\hat\tau=\hat\gamma'$ and $\hat\nu=R\hat\tau$. The physical co-normal line vector is $N\times J\hat\tau\dd\hat t$, and with $w=J\hat w/J_\Gamma$,
\[
   (w\cdot\mu)\dd s=J_\Gamma^{-1}N\cdot(J\hat\tau\times J\hat w)\dd\hat t=\det(\hat\tau,\hat w)\dd\hat t=(\hat w\cdot\hat\nu)\dd\hat t,
\]
using $N\cdot(J\hat a\times J\hat b)=J_\Gamma\det(\hat a,\hat b)$ for $\hat a,\hat b\in\R^2$.
\end{proof}

The smoothness matrices $C_0$, $C_1^{\curl}$ and $C_1^{\divv}$ are therefore assembled from reference data and reused on every geometry; the geometry enters the element mass and stiffness matrices only.

\begin{theorem}[surface commutation]\label{thm:surface-commute}
For smooth reference fields,
\[
   \nabla_\Gamma u\circ F_T=JG^{-1}\widehat\nabla\hat u,\qquad
   \curl_\Gamma v\circ F_T=J_\Gamma^{-1}\widehat\curl\hat v,\qquad
   \divv_\Gamma w\circ F_T=J_\Gamma^{-1}\widehat\divv\hat w,
\]
and the mapped constraint spaces form a de Rham subcomplex on the conforming curved triangulation.
\end{theorem}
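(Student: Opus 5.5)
The plan has two parts. First prove the three pullback identities pointwise on one element. Then obtain the subcomplex statement from them, using Theorems~\ref{thm:surface-trace} and \ref{thm:surface-flux} together with Corollary~\ref{cor:subcomplex} and Theorem~\ref{thm:hdiv-complex}.

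\emph{Gradient.} The surface gradient $\nabla_\Gamma u$ is the unique tangent vector with $\nabla_\Gamma u\cdot J\hat a=D_{\hat a}\hat u$ for every $\hat a\in\R^2$. By the chain rule $J^T(\nabla_\Gamma u\circ F_T)=\widehat\nabla\hat u$. Since $\nabla_\Gamma u$ is tangent, it lies in the range of $J$, so it equals $J\hat b$ with $G\hat b=\widehat\nabla\hat u$. This gives $\nabla_\Gamma u\circ F_T=JG^{-1}\widehat\nabla\hat u$. The identity says exactly that $\nabla_\Gamma u$ is the covariant Piola image of $\widehat\nabla\hat u$.

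\emph{Curl and divergence.} For these I would use integral characterizations rather than coordinates. That avoids second derivatives of $F_T$ and Christoffel terms. For the divergence, take any reference subdomain $\hat\omega\subset\wideT$ with image $\omega=F_T(\hat\omega)$. The surface divergence theorem gives $\int_\omega\divv_\Gamma w\dd A=\int_{\partial\omega}w\cdot\mu\dd s$. By the computation in Theorem~\ref{thm:surface-flux}, applied to the boundary curve of $\hat\omega$, this equals $\int_{\partial\hat\omega}\hat w\cdot\hat\nu\dd\hat t=\int_{\hat\omega}\widehat\divv\hat w\dd\hat x$. Since $\dd A=J_\Gamma\dd\hat x$ and $\hat\omega$ is arbitrary, the integrands agree: $J_\Gamma(\divv_\Gamma w\circ F_T)=\widehat\divv\hat w$.

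The curl is handled the same way. Use Stokes' theorem on the surface, $\int_\omega\curl_\Gamma v\dd A=\oint_{\partial\omega}v\cdot\gamma'\dd t$, with $\curl_\Gamma v=N\cdot\nabla\times v$ for an extension. Theorem~\ref{thm:surface-trace} turns the boundary integral into $\oint_{\partial\hat\omega}\hat v\cdot\hat\gamma'\dd t=\int_{\hat\omega}\widehat\curl\hat v$, and the same localization gives $\curl_\Gamma v\circ F_T=J_\Gamma^{-1}\widehat\curl\hat v$. Orientation consistency is the one thing to check here. The normal $N$ and the reference rotation $R$ must induce matching orientations of the boundary curves, which is where $\det(\hat\tau,\hat w)$ from Theorem~\ref{thm:surface-flux} fixes the sign.

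\emph{Subcomplex.} Let $\hat u$ lie in the reference constraint space, so each $\hat u|_{\wideT}$ is a polynomial and $C_0$ holds on the reference coefficients. By Theorem~\ref{thm:constraint-commute}, $\widehat\nabla\hat u$ satisfies the reference tangential functionals. The gradient identity shows that the mapped field is the covariant Piola image of $\widehat\nabla\hat u$. Theorem~\ref{thm:surface-trace} then says the mapped field has a continuous tangential trace on each curved edge, so it lies in the mapped $H(\curl_\Gamma)$ space. The curl identity shows that $\curl_\Gamma$ maps into the densities $J_\Gamma^{-1}\hat q$, which is the mapped $L^2$ space. Finally $\curl_\Gamma\nabla_\Gamma u\circ F_T=J_\Gamma^{-1}\widehat\curl\,\widehat\nabla\hat u=0$. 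The divergence chain follows identically, using Theorem~\ref{thm:surface-flux} and the rotated identity of Theorem~\ref{thm:hdiv-complex}. Inclusion in $H^1(\Gamma_h)$, $H(\curl_\Gamma)$ and $H(\divv_\Gamma)$ follows because the element maps agree on shared edges. I expect the main obstacle to be the orientation and sign bookkeeping between the covariant and contravariant maps on adjacent elements whose reference edge parameterizations run in opposite directions. I would handle it exactly as in Theorem~\ref{thm:constraint-commute}, by fixing a global edge orientation and applying the reversal permutation.
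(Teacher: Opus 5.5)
Your proposal is correct and follows the paper's proof essentially step for step: the chain rule plus the metric system for the gradient, Stokes' and the divergence theorem on arbitrary subregions $\hat\omega$ combined with the tangential and flux invariances for curl and divergence, and the trace invariances for the subcomplex, which you spell out in more detail than the paper. The one step that both you and the paper leave implicit is in the rotated chain: there you need the surface rotated gradient $N\times\nabla_\Gamma u$ to be the contravariant Piola image of $R\widehat\nabla\hat u$, and this follows from $J^T(N\times J\hat a)=J_\Gamma R\hat a$ together with $GRG=\det(G)\,R$.
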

\begin{proof}
Differentiating $\hat u=u\circ F_T$ in a reference direction $\hat\xi$ gives $\hat\xi\cdot\widehat\nabla\hat u=(J\hat\xi)\cdot\nabla_\Gamma u$, so $J^T\nabla_\Gamma u=\widehat\nabla\hat u$, and since $\nabla_\Gamma u$ is tangent the metric system gives the first identity. For the curl, Stokes' theorem on an arbitrary Lipschitz subregion $\hat\omega$ of $\wideT$ and the tangential invariance give $\int_{\hat\omega}\widehat\curl\hat v=\int_{\partial F_T(\hat\omega)}v\cdot\dd x=\int_{\hat\omega}(\curl_\Gamma v\circ F_T)J_\Gamma$, and $\hat\omega$ is arbitrary. The divergence identity follows in the same way from the flux invariance. The trace invariances then give the mapped subcomplex.
\end{proof}

If the element maps form an exact conforming parameterization of a surface, the mapped complex is isomorphic to the reference complex and has the same cohomology: exact on a contractible patch, and on a closed surface the cohomology dictated by its topology. These statements concern the de Rham functionals only. Higher-order smoothness across a curved edge depends on derivatives of the geometry map, the transfer of $C^r$ functionals to curved patches is open, and the $C^1$ Powell--Sabin experiments in this paper are planar. This is a limitation of the present work rather than of the representation.

\subsection{The curved geometries used below}

For the unit sphere, let $a_1,a_2,a_3\in S^2$ be the vertices of a triangle of an icosahedral triangulation. The radial map
\begin{equation}\label{eq:sphere-map}
   F_T(\lambda)=\frac{\lambda_1a_1+\lambda_2a_2+\lambda_3a_3}{\abs{\lambda_1a_1+\lambda_2a_2+\lambda_3a_3}}
\end{equation}
tiles the sphere exactly. For the hyperboloid we use the patch $\Phi(x,y)=(x,y,\sqrt{1+x^2+y^2})$, $(x,y)\in(0,1)^2$, composed with affine maps in the parameter plane; this is a bounded patch with boundary of the two-sheeted hyperboloid and tests curved metric assembly and trace conformity. Figure~\ref{fig:curved-geometries} shows both meshes.

\begin{figure}[htbp]
\centering
\includegraphics[width=.45\textwidth]{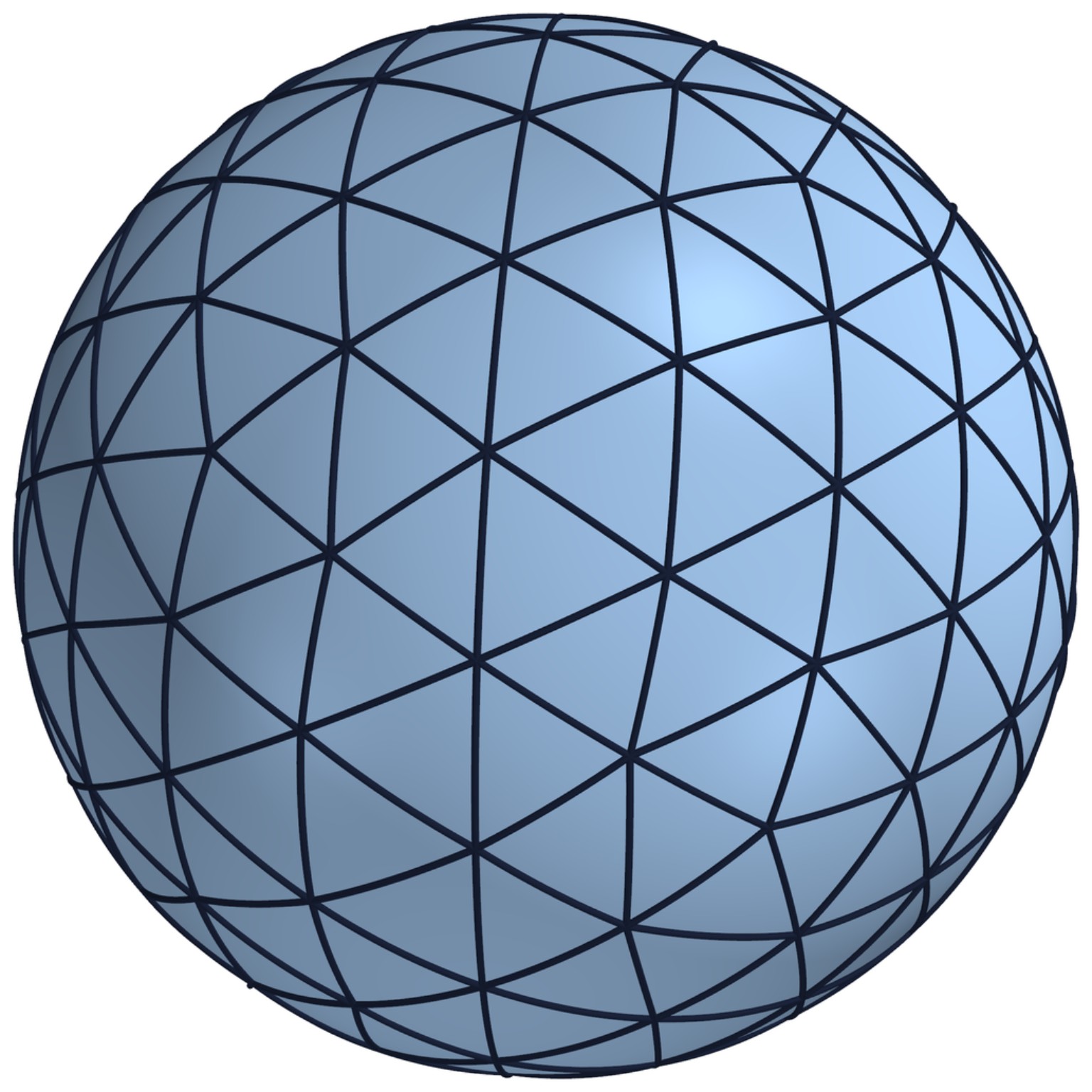}\hfill
\includegraphics[width=.45\textwidth]{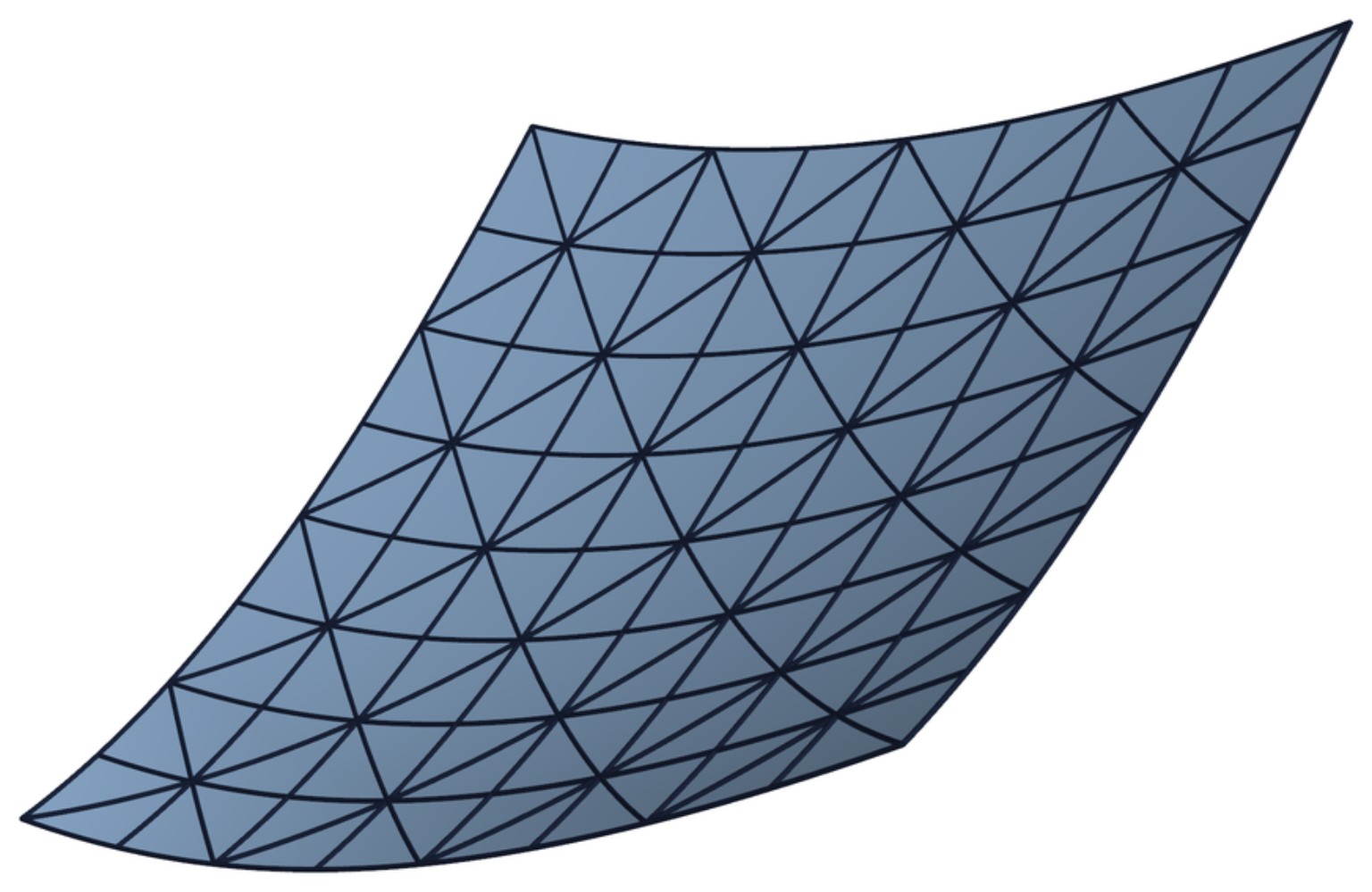}
\caption{Curved triangulations used in the experiments: the sphere under the radial map \eqref{eq:sphere-map} and the hyperboloid patch. The edge smoothness matrices are inherited unchanged from the reference triangulation.}
\label{fig:curved-geometries}
\end{figure}

\section{Approximation and spectral consequences}\label{sec:analysis}

The representation does not change the finite-dimensional space, so approximation results are inherited from the classical theory; we record what the experiments use.

Let $V^1_{h,d}$ be the tangentially continuous space of degree $d$ on a shape-regular triangulation, which by Theorem~\ref{thm:conformity} is the second N\'ed\'elec space. For $u\in H^s(\Omega)^2$ with $\curl u\in H^t(\Omega)$, $1\le s\le d+1$, $0\le t\le d$, the standard commuting projection gives $v_h\in V^1_{h,d}$ with
\begin{equation}\label{eq:approx}
   \norm{u-v_h}_{L^2}\le Ch^s\norm{u}_{H^s},\qquad\norm{\curl(u-v_h)}_{L^2}\le Ch^t\norm{\curl u}_{H^t},
\end{equation}
so smooth fields are approximated to order $d+1$ in $L^2$ and $d$ in curl \cite{ArnoldFalkWinther2006}. The rotated statement gives the BDM estimate with the divergence.

For the shifted problem $(\curl u,\curl v)+\alpha(u,v)=\ell(v)$, $\alpha>0$, the form is coercive on $H(\curl)$, the primal component of \eqref{eq:KKT} is the conforming Galerkin solution by Theorem~\ref{thm:kkt-equivalence}, and C\'ea's lemma with \eqref{eq:approx} gives curl error $O(h^d)$, with $L^2$ error $O(h^{d+1})$ under the usual duality argument. For the time-harmonic form $(\curl u,\curl v)-k^2(u,v)$ with $k^2$ not an eigenvalue, discrete compactness of the compatible family gives uniform stability for small $h$, and the same holds for the constrained representation since the space is the same; the element and interface matrices are then indefinite.

For the eigenproblem $(\curl u,\curl v)=\lambda(u,v)$ on $H_0(\curl)$ restricted to the complement of gradients, the nonzero eigenvalues on the unit square are $\pi^2(m^2+n^2)$, $(m,n)\ne(0,0)$, so the first cluster is $\pi^2,\pi^2,2\pi^2$. For an isolated eigenvalue $\lambda$ with eigenspace $E$ and $\delta_h(E)$ the $H(\curl)$ approximation defect of $E$, spectral approximation theory for the compatible family gives $\dist(E,E_h)\le C\delta_h(E)$ and $\abs{\lambda-\lambda_{h,j}}\le C\delta_h(E)^2$, hence eigenvalue error $O(h^{2d})$ for smooth eigenspaces, and the pencil of Theorem~\ref{thm:eigen-pencil} has exactly these finite eigenvalues. On the unit sphere the coexact fields $n\times\nabla_\Gamma Y_{\ell m}$ have eigenvalues $\ell(\ell+1)$ with multiplicity $2\ell+1$, so the first positive eigenvalue is $2$ with multiplicity three, and with the exact radial map the observed error is discretization error only.

\section{Numerical experiments}\label{sec:numerics}

\subsection{Setting}\label{sec:implementation}

All experiments store B-coefficients by triangle and smoothness functionals by edge. No global basis is formed except in the null-space method of the three-realization comparison. Source problems are solved from \eqref{eq:KKT} with a sparse direct factorization, eigenvalues from the shift-invert operator of Theorem~\ref{thm:eigen-pencil}. The raw smoothness matrix is used for every reported residual and commuting check; only multiplier rows are compressed when the bordered matrix is singular. Element integrals use a Duffy transform of a tensor Gauss--Legendre rule with $\max(6,d+4)$ points per direction for assembly and $\max(8,d+5)$ for error evaluation, except on the hyperboloid, where $d=1$ and a $7\times7$ rule is used throughout. Polynomial terms are therefore integrated exactly and trigonometric loads and curved metrics numerically. Every convergence line contains at least six independently computed resolutions. The implementation is in NumPy/SciPy and its scripts are listed in Appendix~\ref{app:repro}.

As a check on the curl--curl kernel, on the $8\times8$ mesh at $d=3$ the conforming scalar space with homogeneous boundary values has dimension $961$, the rank of the discrete gradient is $961$, the kernel of the conforming discrete curl has dimension $961$, and $\norm{D_1D_0}/(\norm{D_1}\norm{D_0})=4.8\times10^{-18}$; no additional near-zero modes were detected.

\subsection{Planar tangentially continuous source problem}\label{sec:flat-hcurl}

On $\Omega=(0,1)^2$ we solve $\curl\curl E-E=f$ with $E\cdot t=0$ on $\partial\Omega$ and the manufactured field
\begin{equation}\label{eq:manufactured-E}
   E(x,y)=\bigl(\sin(\pi y)\sin(2\pi x),\ \sin(\pi x)\sin(2\pi y)\bigr),
\end{equation}
on $n\times n$ Cartesian partitions split into triangles by alternating diagonals, with the vector field in $[\Pp_d]^2$ on every triangle. Table~\ref{tab:flat-source} and Figure~\ref{fig:flat-source} show the errors. The rates approach $d+1$ in $L^2$ and $d$ in curl, and the constraint residuals $\norm{Cu}/\norm u$ lie between $10^{-18}$ and $3.8\times10^{-17}$ in all eighteen runs. Separately, the local identity \eqref{eq:localtracegrad} assembled on a generic triangle has maximum residual entry between $6.3\times10^{-16}$ and $1.9\times10^{-15}$ for $d=1,\dots,5$; this checks the compatibility of the smoothness functionals themselves, which is more informative than checking $\curl\nabla=0$ after assembly.

\begin{table}[htbp]
\centering\scriptsize
\caption{Planar $H(\curl)$ manufactured-solution errors. Every degree line contains six independently computed mesh resolutions.}
\label{tab:flat-source}
\begin{tabular}{ccrrrr}
\toprule
$d$ & mesh & $\|e\|_{L^2}$ & rate & $\|\curl e\|_{L^2}$ & rate\\
\midrule
1 & 4 & $0.09091$ & -- & $0.5527$ & -- \\
1 & 6 & $0.0444$ & 1.77 & $0.4856$ & 0.32 \\
1 & 8 & $0.02553$ & 1.92 & $0.3762$ & 0.89 \\
1 & 12 & $0.01152$ & 1.96 & $0.2554$ & 0.96 \\
1 & 16 & $0.006511$ & 1.98 & $0.1926$ & 0.98 \\
1 & 24 & $0.002905$ & 1.99 & $0.1289$ & 0.99 \\
\addlinespace[2pt]
2 & 3 & $0.03319$ & -- & $0.2547$ & -- \\
2 & 4 & $0.01432$ & 2.92 & $0.1814$ & 1.18 \\
2 & 6 & $0.004393$ & 2.91 & $0.07217$ & 2.27 \\
2 & 8 & $0.001868$ & 2.97 & $0.04074$ & 1.99 \\
2 & 12 & $5.56\times10^{-4}$ & 2.99 & $0.01824$ & 1.98 \\
2 & 16 & $2.35\times10^{-4}$ & 2.99 & $0.01029$ & 1.99 \\
\addlinespace[2pt]
3 & 2 & $0.02411$ & -- & $0.08162$ & -- \\
3 & 3 & $0.005669$ & 3.57 & $0.06648$ & 0.51 \\
3 & 4 & $0.001868$ & 3.86 & $0.01968$ & 4.23 \\
3 & 6 & $3.79\times10^{-4}$ & 3.93 & $0.008277$ & 2.14 \\
3 & 8 & $1.21\times10^{-4}$ & 3.97 & $0.003619$ & 2.88 \\
3 & 12 & $2.41\times10^{-5}$ & 3.98 & $0.001094$ & 2.95 \\
\bottomrule
\end{tabular}
\end{table}

\begin{figure}[htbp]
\centering
\includegraphics[width=.48\textwidth]{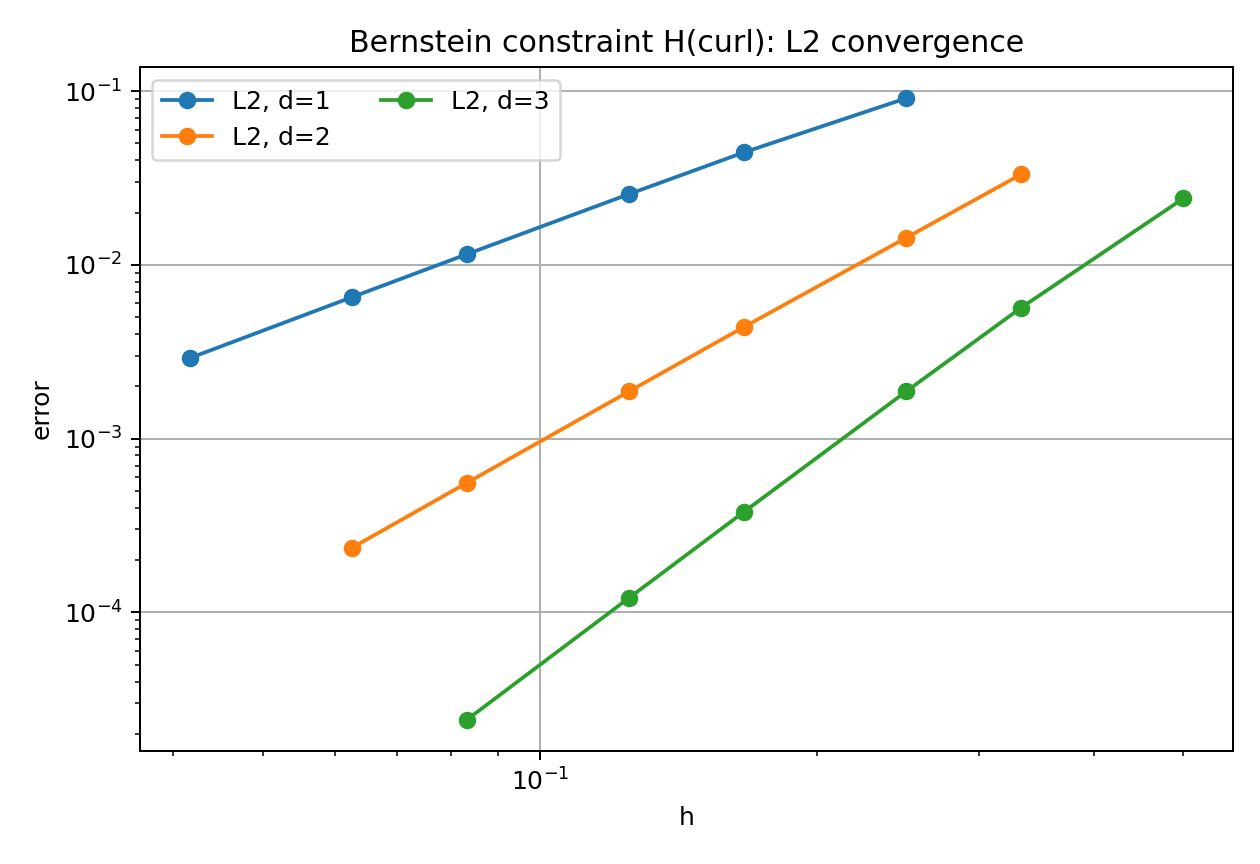}\hfill
\includegraphics[width=.48\textwidth]{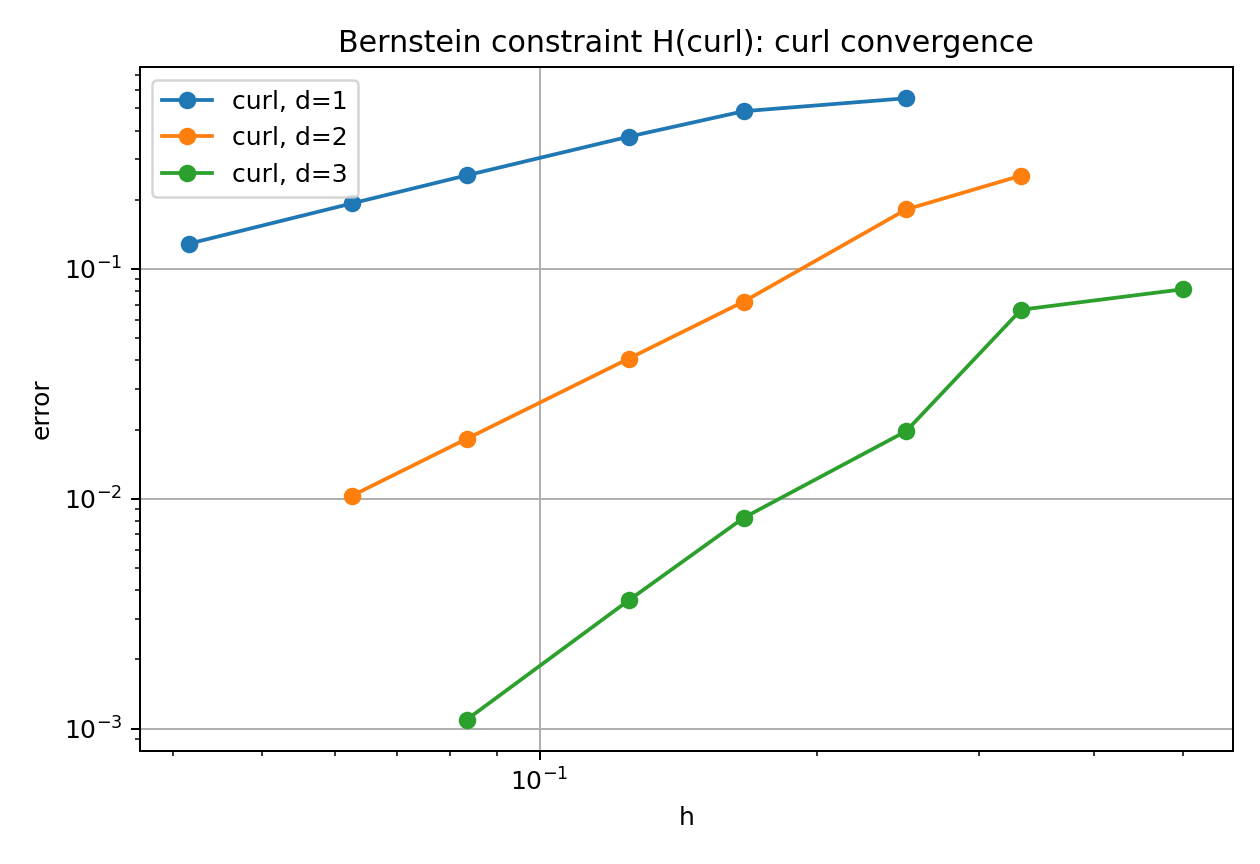}
\caption{Planar tangentially continuous source problem: $L^2$ and curl errors against mesh size for $d=1,2,3$. The asymptotic orders are $d+1$ and $d$.}
\label{fig:flat-source}
\end{figure}

\subsection{Planar normally continuous source problem}

With $U=RE$, the normal boundary condition for $U$ corresponds to the tangential one for $E$. We solve $(U_h,V_h)+(\divv U_h,\divv V_h)=\ell(V_h)$ with the load manufactured from the exact $U$, using $[\Pp_d]^2$ on every triangle and the normal functionals instead of the tangential ones. Table~\ref{tab:flat-hdiv} and Figure~\ref{fig:hdiv} show convergence identical, as rotation predicts, to the tangential experiment: the difference between the two spaces lies entirely in the edge block.

\begin{table}[htbp]
\centering\scriptsize
\caption{Planar $H(\divv)$ manufactured-solution errors using normal-trace constraints. Every degree line contains six independently computed mesh resolutions.}
\label{tab:flat-hdiv}
\begin{tabular}{ccrrrr}
\toprule
$d$ & mesh & $\|e\|_{L^2}$ & rate & $\|\divv e\|_{L^2}$ & rate\\
\midrule
1 & 4 & $0.09072$ & -- & $0.5527$ & -- \\
1 & 6 & $0.04429$ & 1.77 & $0.4856$ & 0.32 \\
1 & 8 & $0.02546$ & 1.92 & $0.3762$ & 0.89 \\
1 & 12 & $0.01148$ & 1.96 & $0.2554$ & 0.96 \\
1 & 16 & $0.006492$ & 1.98 & $0.1926$ & 0.98 \\
1 & 24 & $0.002896$ & 1.99 & $0.1289$ & 0.99 \\
\addlinespace[2pt]
2 & 3 & $0.03315$ & -- & $0.2547$ & -- \\
2 & 4 & $0.0143$ & 2.92 & $0.1814$ & 1.18 \\
2 & 6 & $0.004391$ & 2.91 & $0.07217$ & 2.27 \\
2 & 8 & $0.001868$ & 2.97 & $0.04074$ & 1.99 \\
2 & 12 & $5.56\times10^{-4}$ & 2.99 & $0.01824$ & 1.98 \\
2 & 16 & $2.35\times10^{-4}$ & 2.99 & $0.01029$ & 1.99 \\
\addlinespace[2pt]
3 & 2 & $0.02409$ & -- & $0.08162$ & -- \\
3 & 3 & $0.005666$ & 3.57 & $0.06648$ & 0.51 \\
3 & 4 & $0.001867$ & 3.86 & $0.01968$ & 4.23 \\
3 & 6 & $3.79\times10^{-4}$ & 3.93 & $0.008277$ & 2.14 \\
3 & 8 & $1.21\times10^{-4}$ & 3.96 & $0.003619$ & 2.88 \\
3 & 12 & $2.41\times10^{-5}$ & 3.98 & $0.001094$ & 2.95 \\
\bottomrule
\end{tabular}
\end{table}

\begin{figure}[htbp]
\centering
\includegraphics[width=.56\textwidth]{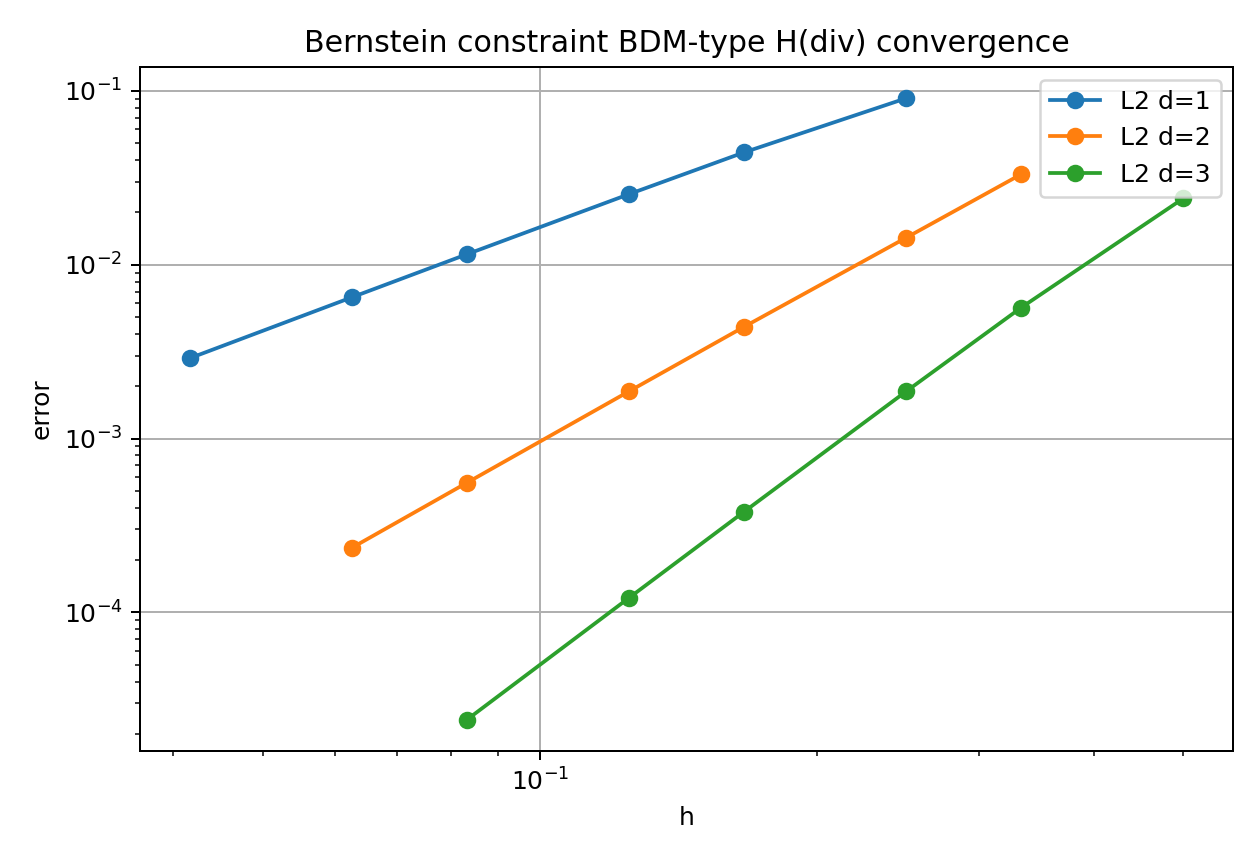}
\caption{Normally continuous (BDM-type) source problem. Only the normal B-coefficients are matched across edges.}
\label{fig:hdiv}
\end{figure}

\subsection{The Powell--Sabin complex and a \texorpdfstring{$C^1$}{C1} biharmonic problem}\label{sec:ps-numerics}

Each square of an $n\times n$ grid is split into two triangles and each triangle receives the six-split of Section~\ref{sec:ps-complex}. The code stores broken quadratic coefficients and the edge functionals \eqref{eq:ps-normal-derivative} and \eqref{eq:ps-J2}. An orthonormal basis of $\ker J_0$ is computed only as an offline diagnostic for the tests \eqref{eq:ps-tests}, never in a solve. Table~\ref{tab:ps-exactness} reports the ranks. On all six meshes the gap $\dim\ker(\mathsf D_{\rm PS}|_{\ker J_1})-\rank(\mathsf R_{\rm PS}|_{\ker J_0})$ is zero and $\dim\ker J_2=\rank(\mathsf D_{\rm PS}|_{\ker J_1})$; $\mathsf D_{\rm PS}\mathsf R_{\rm PS}=0$ holds at roundoff, and the two compatibility residuals stay below $6.7\times10^{-14}$ and $2.7\times10^{-15}$. At $n=3$, for example, the refinement has $108$ subtriangles, $\dim S^1_2=48$, $\dim L^1_1=134$, $\dim V^2_0=87$, and $\rank\rot=47=\dim S^1_2-1=\dim\ker\divv$. These are the rank identities of the exact sequence, obtained from the smoothness matrices.

\begin{table}[htbp]
\centering\scriptsize
\caption{Rank identities of the lowest-order Powell--Sabin exact complex, computed from the smoothness matrices, on six successively refined unit-square macrotriangulations. Here $r_{01}=\|J_1\mathsf R_{\rm ps}(I-J_0^+J_0)\|$ and $r_{12}=\|J_2\mathsf D_{\rm ps}(I-J_1^+J_1)\|$ are normalized in the code by the corresponding operator norms. Exactness is independently checked by the zero gap $\dim\ker(\mathsf D_{\rm ps}|_{\ker J_1})-\rank(\mathsf R_{\rm ps}|_{\ker J_0})$ and by $\dim\ker J_2=\rank(\mathsf D_{\rm ps}|_{\ker J_1})$.}
\label{tab:ps-exactness}
\setlength{\tabcolsep}{3pt}
\begin{tabular}{rrrrrrrrrr}
\toprule
$n$ & PS tri. & $\dim S^1_2$ & $\dim L^1_1$ & $\dim\ker J_2$ & $\rank\mathsf R_{\rm ps}$ & $\rank\mathsf D_{\rm ps}$ & gap & $r_{01}$ & $r_{12}$\\
\midrule
1 & 12 & 12 & 22 & 11 & 11 & 11 & 0 & $3.94\times10^{-15}$ & $5.36\times10^{-16}$ \\
2 & 48 & 27 & 66 & 40 & 26 & 40 & 0 & $1.02\times10^{-14}$ & $9.69\times10^{-16}$ \\
3 & 108 & 48 & 134 & 87 & 47 & 87 & 0 & $2.01\times10^{-14}$ & $1.59\times10^{-15}$ \\
4 & 192 & 75 & 226 & 152 & 74 & 152 & 0 & $3.42\times10^{-14}$ & $1.9\times10^{-15}$ \\
5 & 300 & 108 & 342 & 235 & 107 & 235 & 0 & $5.33\times10^{-14}$ & $2.27\times10^{-15}$ \\
6 & 432 & 147 & 482 & 336 & 146 & 336 & 0 & $6.69\times10^{-14}$ & $2.6\times10^{-15}$ \\
\bottomrule
\end{tabular}
\end{table}

To use the first space as a trial space, consider the clamped biharmonic problem $\Delta^2u=f$ on $(0,1)^2$ with $u=\partial_nu=0$ and exact solution $u=[x(1-x)y(1-y)]^2$. The energy $(D^2u_h,D^2v_h)$ is assembled on the broken quadratic coefficients, interior $C^1$ smoothness and the two clamped boundary conditions are imposed as exact functionals, and redundant rows are compressed by pivoted QR. Table~\ref{tab:ps-biharmonic} shows the $H^2$ seminorm rate settling to one and the $H^1$ and $L^2$ rates to two, as expected for the quadratic $C^1$ space, with full uncompressed residuals between $4.9\times10^{-16}$ and $2.6\times10^{-14}$. The same machinery that realizes the de Rham functionals thus solves a fourth-order problem in a smooth spline space.

\begin{table}[htbp]
\centering\scriptsize
\caption{Clamped biharmonic problem in the $C^1$ quadratic Powell--Sabin space at six nontrivial mesh resolutions. The exact solution is $u=[x(1-x)y(1-y)]^2$.}
\label{tab:ps-biharmonic}
\begin{tabular}{rrrrrrrrr}
\toprule
$n$ & PS tri. & conf. dim. & $L^2$ & rate & $H^1$ semi. & rate & $H^2$ semi. & rate\\
\midrule
2 & 48 & 3 & $0.001113$ & -- & $0.005575$ & -- & $0.04746$ & -- \\
3 & 108 & 12 & $5.04\times10^{-4}$ & 1.95 & $0.002558$ & 1.92 & $0.03172$ & 0.99 \\
4 & 192 & 27 & $2.98\times10^{-4}$ & 1.82 & $0.001517$ & 1.82 & $0.02432$ & 0.92 \\
5 & 300 & 48 & $1.95\times10^{-4}$ & 1.90 & $9.95\times10^{-4}$ & 1.89 & $0.01962$ & 0.96 \\
6 & 432 & 75 & $1.37\times10^{-4}$ & 1.95 & $6.99\times10^{-4}$ & 1.94 & $0.0164$ & 0.98 \\
8 & 768 & 147 & $7.75\times10^{-5}$ & 1.98 & $3.96\times10^{-4}$ & 1.97 & $0.01232$ & 0.99 \\
\bottomrule
\end{tabular}
\end{table}

\subsection{Three realizations on the same problems}\label{sec:three-realizations-numerical}

We now separate the choice of realization from the approximation. All three methods start from the same broken element matrices, quadrature, mesh, degree, load and raw smoothness equations. The direct saddle solve uses a row basis only to remove null multiplier coordinates. The null-space method builds $Z$ by sparse elimination and solves \eqref{eq:Zreduced}. The augmented Lagrangian route \eqref{eq:alw} keeps the full raw matrix in $C^TC$, starts from $\lambda^{(0)}=0$, factors $E_\eps$ once for each $\eps=10^j$, $j=-8,\dots,8$, and reuses the factorization. We record $k_u$, the first iterate with $\norm{c^{(m)}-c_{\rm sad}}/\norm{c_{\rm sad}}\le10^{-4}$, with $c_{\rm sad}$ the solution of the saddle system,, and $k_C$, the first with $\norm{Cc^{(m)}}/\norm{c^{(m)}}\le10^{-8}$; runs are capped at $1000$ updates. The condition numbers $\widehat\kappa_1$ are estimates of the one-norm condition number of the factored matrix, $E_\eps$ for the augmented method and the bordered matrix for the saddle solve, computed by Hager's one-norm estimator with inverse actions taken from the same sparse factorization used in the solve.

\paragraph{Classical de Rham case, $d=3$.}
The tangentially continuous problem of Section~\ref{sec:flat-hcurl} on the $8\times8$ mesh at $d=3$ has $2560$ broken coefficients, $832$ tangential functionals, and $1728$ unknowns after elimination. The saddle solve and the null-space method agree to $3.1\times10^{-12}$ in relative coefficient norm, both with $\norm{u-u_h}_{L^2}=1.22\times10^{-4}$ and $\norm{\curl(u-u_h)}_{L^2}=3.70\times10^{-3}$, and the reduced complex of Proposition~\ref{prop:z-complex} satisfies $\norm{C_1D_0Z_0}/\norm{D_0Z_0}=8.4\times10^{-17}$ and $\norm{\widehat D_1\widehat D_0}/(\norm{\widehat D_1}\norm{\widehat D_0})=4.8\times10^{-18}$. In the augmented sweep one outer solve reaches the $10^{-4}$ criterion for $10^{-7}\le\eps\le10^{-4}$, three suffice at $\eps=1$, and the count grows to $197$ at $\eps=10^3$ and beyond the cap for $\eps\ge10^4$; at $\eps=10^{-8}$ the joining residual is tiny while the coefficients stay away from the constrained solution, so a small joining residual alone is no evidence that the augmented solve is accurate. The operator $\curl\curl-I$ is indefinite and outside the hypothesis of the convergence theorem of \cite{AwanouLaiWenston2006}, so these counts are empirical.

\paragraph{Smooth case, $C^1$ and $d=5$.}
The second study uses the profile $S^1_5\to[S^0_4]^2\to S^{-1}_3$ on the same mesh, with the Poisson problem $-\Delta u=f$, $u=0$ on $\partial\Omega$, exact solution $\sin(\pi x)\sin(\pi y)$, and the approximation in $S^1_5(\tri)$. The smoothness matrix contains the conditions \eqref{eq:LS-smoothness} with $r=1$ on every interior edge and the Dirichlet rows: $2688$ broken coefficients and $2128$ raw equations of rank $1897$, hence $791$ unknowns. Both singularities of Section~\ref{sec:variational} are present, the broken stiffness matrix annihilating constants on each triangle and the raw $C^1$ equations carrying the vertex dependencies. The saddle solve and the null-space method agree to $1.1\times10^{-14}$, with $L^2$ error $7.66\times10^{-8}$ and $H^1$ seminorm error $6.06\times10^{-6}$, and the reduced complex satisfies $\norm{C_1D_0Z_0}/\norm{D_0Z_0}=3.4\times10^{-17}$ and $\norm{\widehat D_1\widehat D_0}/(\norm{\widehat D_1}\norm{\widehat D_0})=1.7\times10^{-18}$: the three realizations remain available when continuity is a derivative relation rather than an identification of coefficients. In this semidefinite case, covered by \cite[Thm.~6]{AwanouLaiWenston2006}, one outer solve suffices for $\eps\le10^{-4}$, nine are needed at $\eps=1$, $561$ at $\eps=10^2$, and the target is not reached for $\eps\ge10^3$, while the condition estimate falls from $6.2\times10^{11}$ at $\eps=10^{-8}$ to $2.9\times10^3$ near $\eps=10$ and rises again beyond; the parameter that minimizes the condition number does not minimize the outer count. Tables~\ref{tab:three-exact} and~\ref{tab:alw-two-sweeps} and Figures~\ref{fig:three-realizations-cond} and~\ref{fig:three-realizations-it} collect the sweeps.

\begin{table}[htbp]
\centering
\caption{The two parameter-free realizations in the comparison studies. The null-space matrix $Z$ is produced by sparse elimination. In the $C^1$ case the size and condition estimate of the saddle system refer to the row-compressed matrix used for factorization, while the residual is evaluated with the full raw smoothness matrix. $\widehat\kappa_1$ is the estimated one-norm condition number of the factored matrix.}
\label{tab:three-exact}
\footnotesize
\begin{tabular}{llrrrrrr}
\toprule
study & method & size & $\widehat\kappa_1$ & setup [s] & factor [s] & solve [s] & residual\\
\midrule
$C^0,d=3$ & saddle system & 3392 & 3.91e+05 & 0 & 0.00759 & 0.000447 & 2.11e-17\\
$C^0,d=3$ & null-space method & 1728 & 4.95e+05 & 0.141 & 0.0111 & 0.000528 & 1.65e-16\\
$C^1,d=5$ & saddle system & 4585 & 2.40e+03 & 0.108 & 0.0167 & 0.000959 & 1.84e-16\\
$C^1,d=5$ & null-space method & 791 & 4.93e+03 & 0.106 & 0.00512 & 0.000188 & 1.39e-16\\
\bottomrule
\end{tabular}
\end{table}
\begin{table}[htbp]
\centering
\caption{Augmented Lagrangian sweep on the problems of Table~\ref{tab:three-exact}. $k_u$ is the first outer iterate within $10^{-4}$ of the saddle-system solution; $k_C$ is the first with normalized constraint residual below $10^{-8}$.  ``$>1000$'' denotes failure to reach the criterion within the cap; $\widehat\kappa_1$ is the estimated one-norm condition number of $E_\eps$.}
\label{tab:alw-two-sweeps}
\scriptsize
\setlength{\tabcolsep}{3.5pt}
\begin{tabular}{r|rrr|rrr}
\toprule
& \multicolumn{3}{c|}{$C^0,d=3$: $H(\curl)$} & \multicolumn{3}{c}{$C^1,d=5$: Poisson}\\
$\log_{10}\eps$ & $\widehat\kappa_1$ & $k_u$ & $k_C$ & $\widehat\kappa_1$ & $k_u$ & $k_C$\\
\midrule
-8 & 5.20e+13 & $>1000$ & 1 & 6.22e+11 & 1 & 1\\
-7 & 5.23e+12 & 1 & 1 & 6.22e+10 & 1 & 1\\
-6 & 4.80e+11 & 1 & 2 & 6.22e+09 & 1 & 2\\
-4 & 5.24e+09 & 1 & 2 & 6.22e+07 & 1 & 2\\
-2 & 5.35e+07 & 2 & 3 & 6.24e+05 & 2 & 4\\
0 & 5.35e+05 & 3 & 5 & 7.92e+03 & 9 & 20\\
1 & 2.95e+05 & 6 & 10 & 2.90e+03 & 59 & 139\\
2 & 2.61e+05 & 23 & 47 & 1.08e+04 & 561 & $>1000$\\
3 & 2.66e+05 & 197 & 402 & 9.45e+04 & $>1000$ & $>1000$\\
4 & 2.74e+05 & $>1000$ & $>1000$ & 9.32e+05 & $>1000$ & $>1000$\\
6 & 2.37e+05 & $>1000$ & $>1000$ & 9.31e+07 & $>1000$ & $>1000$\\
8 & 2.37e+05 & $>1000$ & $>1000$ & 9.31e+09 & $>1000$ & $>1000$\\
\bottomrule
\end{tabular}
\end{table}

\begin{figure}[htbp]
\centering
\includegraphics[width=.66\textwidth]{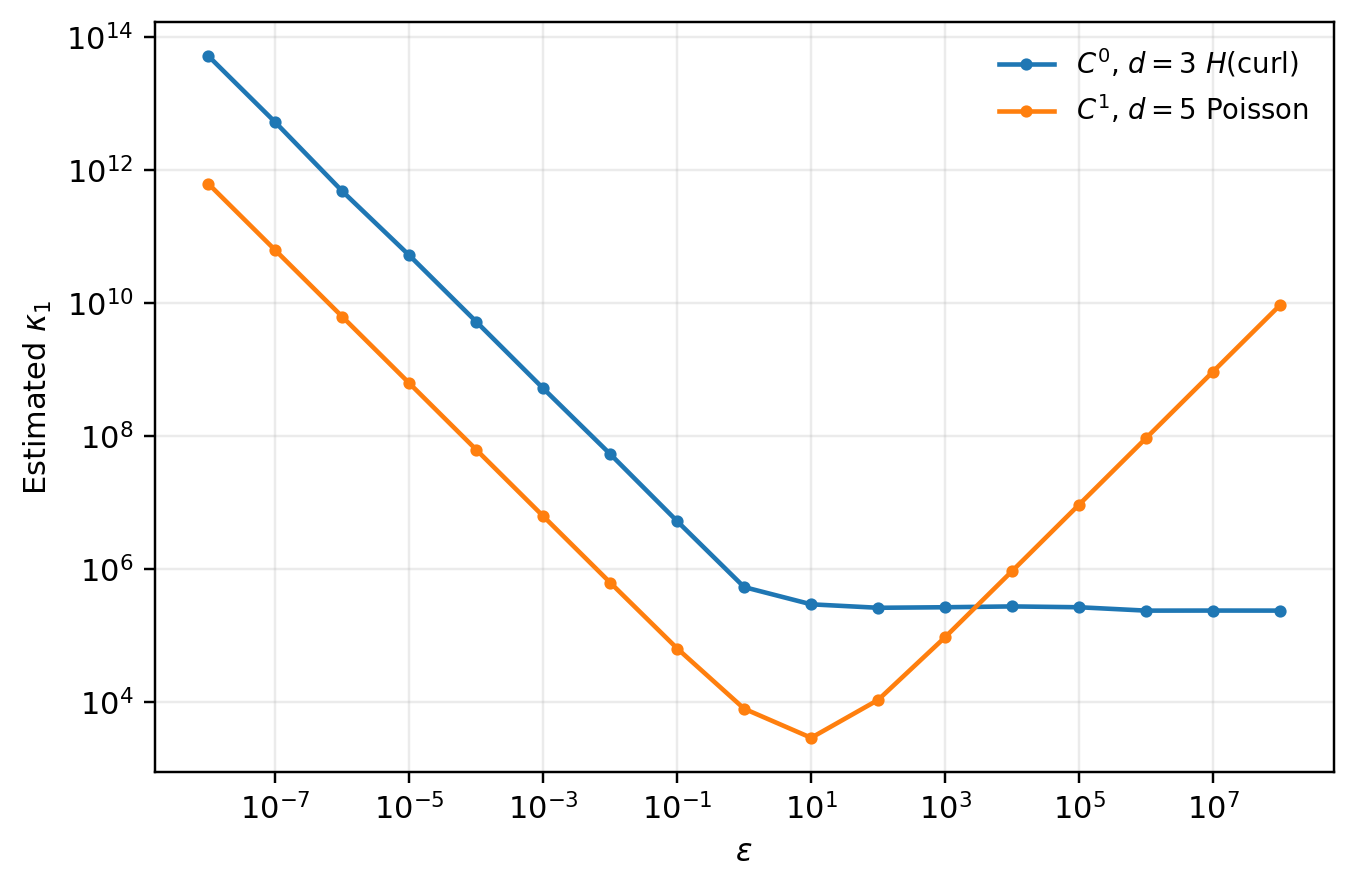}
\caption{Estimated one-norm condition number of the augmented matrix $E_\eps$ over the parameter sweep for the $d=3$ de Rham problem and the $C^1$, $d=5$ problem. Small $\eps$ enforces the constraints in one solve at the price of penalty conditioning; large $\eps$ exposes the broken operator, and for the semidefinite Poisson matrix the condition number rises again.}
\label{fig:three-realizations-cond}
\end{figure}

\begin{figure}[htbp]
\centering
\includegraphics[width=.66\textwidth]{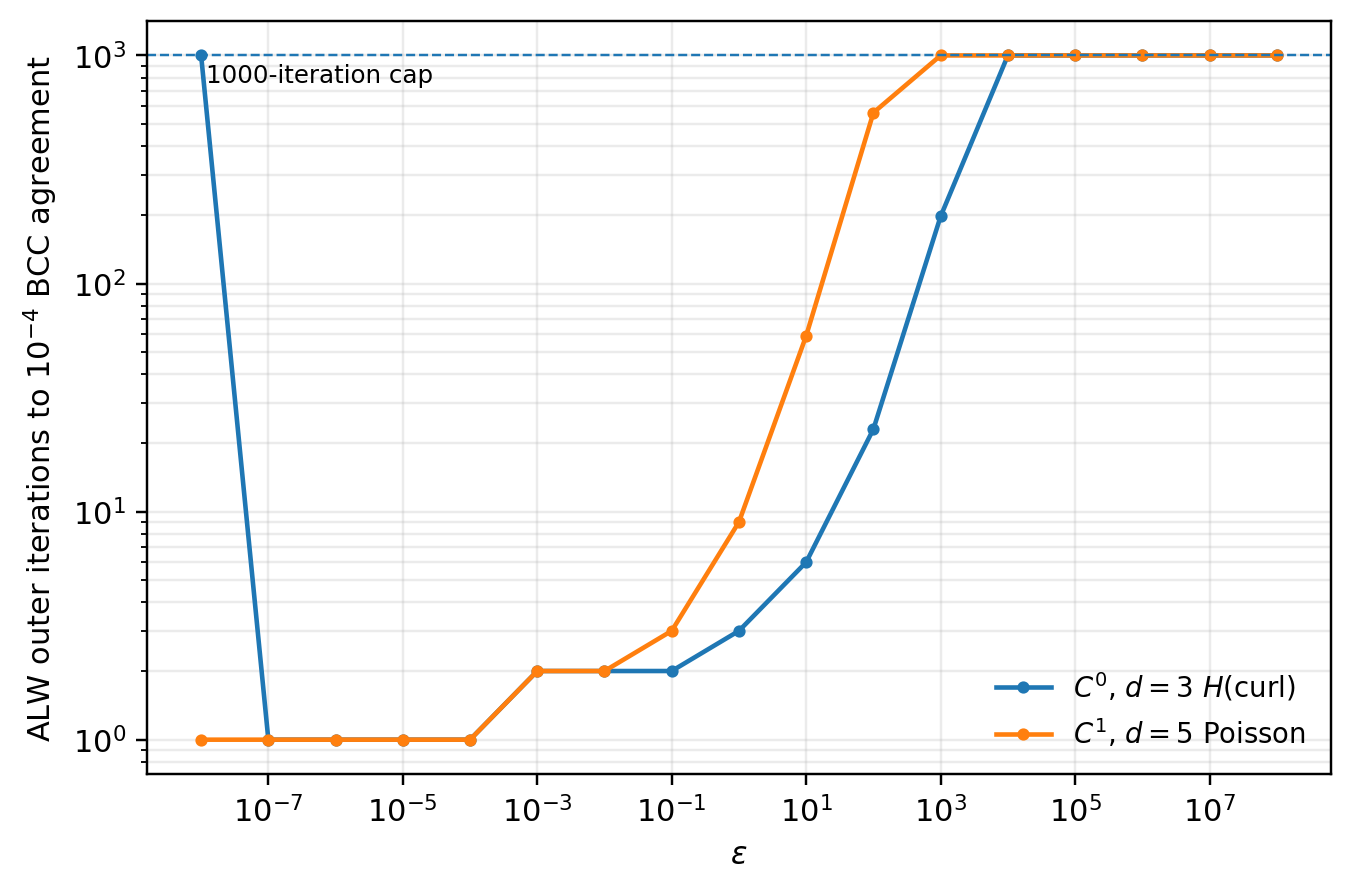}
\caption{Outer iterations of the augmented Lagrangian method to reach $10^{-4}$ relative agreement with the constrained solution from the saddle solve. Points at the cap did not meet the criterion within $1000$ updates.}
\label{fig:three-realizations-it}
\end{figure}

\paragraph{Assessment.}
The null-space method gives the smallest system, preserves the complex to roundoff, and is the method of choice when the null-space matrix is reused; in the $d=3$ study its determining set consists of the shared domain points, so its row in Table~\ref{tab:three-exact} is exactly the standard conforming assembly with a global basis, and the saddle solve's extra cost relative to it is the cost of the constrained representation; in the $C^1$ study it factors in a third of the time of the bordered system. The augmented iteration recovers the constrained solution in a few outer solves for a favorable parameter, and its behavior depends on the scaling and redundancy of the rows in $C^TC$ as well as on the kernel. The direct saddle solve needs neither a null-space matrix in every slot nor a parameter, which is the property the eigenvalue and surface experiments use, and it is the realization used for the remainder of the section.

\subsection{Planar Maxwell eigenvalues}

We solve the eigenproblem through the pencil of Theorem~\ref{thm:eigen-pencil} with a shift near $\pi^2$. Table~\ref{tab:flat-eigs} and Figure~\ref{fig:flat-eigs} show that the double eigenvalue $\pi^2$ is preserved at all six resolutions for every degree; at $d=3$ on the finest mesh the first two eigenvalues are $9.869604419$ and the third is $19.739209327$, against $9.869604401$ and $19.739208802$. The error lines are consistent with the $O(h^{2d})$ estimate.

\begin{table}[htbp]
\centering\scriptsize
\caption{First three positive Maxwell eigenvalues on the unit square. The exact values are $\pi^2,\pi^2,2\pi^2$. Six mesh resolutions are reported for each degree.}
\label{tab:flat-eigs}
\begin{tabular}{ccrrr}
\toprule
$d$&mesh&$\lambda_{h,1}$&$\lambda_{h,2}$&$\lambda_{h,3}$\\
\midrule
1 & 4 & 10.208127030 & 10.208127030 & 21.063702517 \\
1 & 6 & 10.020066262 & 10.020066262 & 20.335873620 \\
1 & 8 & 9.954213523 & 9.954213523 & 20.076074309 \\
1 & 12 & 9.907196652 & 9.907196652 & 19.889276980 \\
1 & 16 & 9.890747322 & 9.890747322 & 19.823686341 \\
1 & 24 & 9.879000336 & 9.879000336 & 19.776774091 \\
\addlinespace[2pt]
2 & 3 & 9.879001651 & 9.879001651 & 19.826392231 \\
2 & 4 & 9.872643160 & 9.872643160 & 19.768306182 \\
2 & 6 & 9.870214414 & 9.870214414 & 19.745189072 \\
2 & 8 & 9.869798530 & 9.869798530 & 19.741128299 \\
2 & 12 & 9.869642908 & 9.869642908 & 19.739591924 \\
2 & 16 & 9.869616603 & 9.869616603 & 19.739330470 \\
\addlinespace[2pt]
3 & 2 & 9.870380391 & 9.870380391 & 19.775381942 \\
3 & 3 & 9.869675648 & 9.869675648 & 19.741160016 \\
3 & 4 & 9.869617279 & 9.869617279 & 19.739572303 \\
3 & 6 & 9.869605544 & 9.869605544 & 19.739241764 \\
3 & 8 & 9.869604605 & 9.869604605 & 19.739214735 \\
3 & 12 & 9.869604419 & 9.869604419 & 19.739209327 \\
\bottomrule
\end{tabular}
\end{table}

\begin{figure}[htbp]
\centering
\includegraphics[width=.58\textwidth]{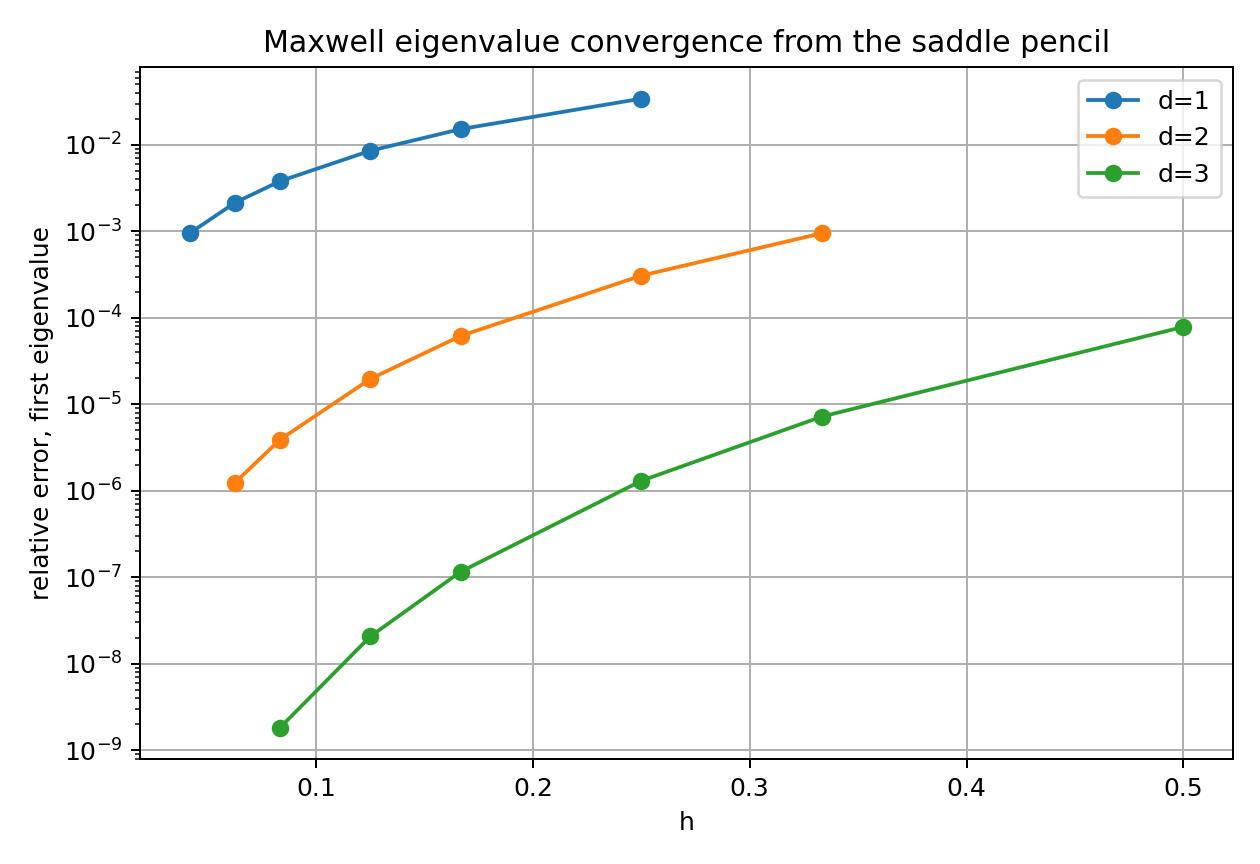}
\caption{Relative error in the first positive Maxwell eigenvalue on the unit square, computed from the constrained pencil without a basis of $\ker C_1^{\curl}$.}
\label{fig:flat-eigs}
\end{figure}

\subsection{The exact sphere}

Starting from an icosahedron, each face is subdivided with geodesic frequency $q=1,\dots,6$, the lattice vertices are normalized to the unit sphere, and every subtriangle is mapped by \eqref{eq:sphere-map}, giving $20q^2$ curved triangles that tile the sphere exactly. The tangential field $u=n\times e_z=(y,-x,0)$ has $\curl_\Gamma u=-2z$, and we solve the shifted problem $(\curl_\Gamma u_h,\curl_\Gamma v_h)+(u_h,v_h)=(\curl_\Gamma u,\curl_\Gamma v_h)+(u,v_h)$ with the covariant map and the reference tangential functionals. Table~\ref{tab:sphere-source} shows optimal orders at every degree; at $d=3$ the $L^2$ error falls from $2.786\times10^{-2}$ to $2.354\times10^{-5}$ with final rate $3.99$ and the curl error from $2.329\times10^{-1}$ to $1.116\times10^{-3}$ with final rate $3.00$.

\begin{table}[htbp]
\centering\scriptsize
\caption{$H(\curl_\Gamma)$ source errors on the exactly represented unit sphere. For each degree $d=1,2,3$ the geodesic frequency $q=1,\ldots,6$ gives $20q^2$ curved triangles, hence six resolutions.}
\label{tab:sphere-source}
\begin{tabular}{ccrrrr}
\toprule
$d$&$q$&$\|e\|_{L^2}$&rate&$\|\curl_\Gamma e\|_{L^2}$&rate\\
\midrule
1 & 1 & $0.3757$ & -- & $1.535$ & -- \\
1 & 2 & $0.09318$ & 2.01 & $0.7345$ & 1.06 \\
1 & 3 & $0.04099$ & 2.03 & $0.4836$ & 1.03 \\
1 & 4 & $0.02305$ & 2.00 & $0.3613$ & 1.01 \\
1 & 5 & $0.01476$ & 2.00 & $0.2885$ & 1.01 \\
1 & 6 & $0.01025$ & 2.00 & $0.2402$ & 1.01 \\
\addlinespace[2pt]
2 & 1 & $0.1308$ & -- & $0.7591$ & -- \\
2 & 2 & $0.01648$ & 2.99 & $0.197$ & 1.95 \\
2 & 3 & $0.004818$ & 3.03 & $0.08737$ & 2.00 \\
2 & 4 & $0.002033$ & 3.00 & $0.04923$ & 1.99 \\
2 & 5 & $0.001041$ & 3.00 & $0.03153$ & 2.00 \\
2 & 6 & $6.02\times10^{-4}$ & 3.00 & $0.0219$ & 2.00 \\
\addlinespace[2pt]
3 & 1 & $0.02786$ & -- & $0.2329$ & -- \\
3 & 2 & $0.0019$ & 3.87 & $0.03028$ & 2.94 \\
3 & 3 & $3.71\times10^{-4}$ & 4.02 & $0.008891$ & 3.02 \\
3 & 4 & $1.19\times10^{-4}$ & 3.97 & $0.003761$ & 2.99 \\
3 & 5 & $4.87\times10^{-5}$ & 3.99 & $0.001927$ & 3.00 \\
3 & 6 & $2.35\times10^{-5}$ & 3.99 & $0.001116$ & 3.00 \\
\bottomrule
\end{tabular}
\end{table}

The contravariant branch is exercised with $w=\nabla_\Gamma z=e_z-zn$, $\divv_\Gamma w=-2z$, in the reaction--divergence problem with normal functionals and the contravariant map. For $d=1,2,3$ its errors agree with the corresponding rows of Table~\ref{tab:sphere-source} to the displayed digits after replacing curl by divergence, and the constraint residuals at $q=6$ are $9.8\times10^{-16}$, $6.3\times10^{-16}$ and $1.2\times10^{-15}$; this is the curved analogue of the planar rotation check and confirms Theorem~\ref{thm:surface-flux} directly. The first positive curl--curl eigenvalue on the sphere is $2$ with multiplicity three. Table~\ref{tab:sphere-eigs} and Figure~\ref{fig:sphere-results} show the multiplicity preserved at all six frequencies; at $d=3$ the eigenvalue moves from $2.006309985$ at $q=1$ to $2.000000148$ at $q=6$.

\begin{table}[htbp]
\centering\scriptsize
\caption{First spherical Maxwell/Hodge eigenvalue cluster. The exact eigenvalue is $2$ with multiplicity three. Six geodesic frequencies are reported for each degree.}
\label{tab:sphere-eigs}
\begin{tabular}{ccrrr}
\toprule
$d$&$q$&$\lambda_{h,1}$&$\lambda_{h,2}$&$\lambda_{h,3}$\\
\midrule
1 & 1 & 2.302696220 & 2.302696220 & 2.302696361 \\
1 & 2 & 2.065097274 & 2.065097274 & 2.065097274 \\
1 & 3 & 2.028030001 & 2.028030001 & 2.028030001 \\
1 & 4 & 2.015612699 & 2.015612699 & 2.015612699 \\
1 & 5 & 2.009947162 & 2.009947162 & 2.009947162 \\
1 & 6 & 2.006890952 & 2.006890952 & 2.006890952 \\
\addlinespace[2pt]
2 & 1 & 2.067725628 & 2.067725628 & 2.067725992 \\
2 & 2 & 2.004580149 & 2.004580149 & 2.004580149 \\
2 & 3 & 2.000906122 & 2.000906122 & 2.000906122 \\
2 & 4 & 2.000288344 & 2.000288344 & 2.000288344 \\
2 & 5 & 2.000118409 & 2.000118409 & 2.000118409 \\
2 & 6 & 2.000057185 & 2.000057185 & 2.000057185 \\
\addlinespace[2pt]
3 & 1 & 2.006309985 & 2.006310015 & 2.006310015 \\
3 & 2 & 2.000108562 & 2.000108562 & 2.000108562 \\
3 & 3 & 2.000009404 & 2.000009404 & 2.000009404 \\
3 & 4 & 2.000001685 & 2.000001685 & 2.000001685 \\
3 & 5 & 2.000000443 & 2.000000443 & 2.000000443 \\
3 & 6 & 2.000000148 & 2.000000148 & 2.000000148 \\
\bottomrule
\end{tabular}
\end{table}

\begin{figure}[htbp]
\centering
\includegraphics[width=.48\textwidth]{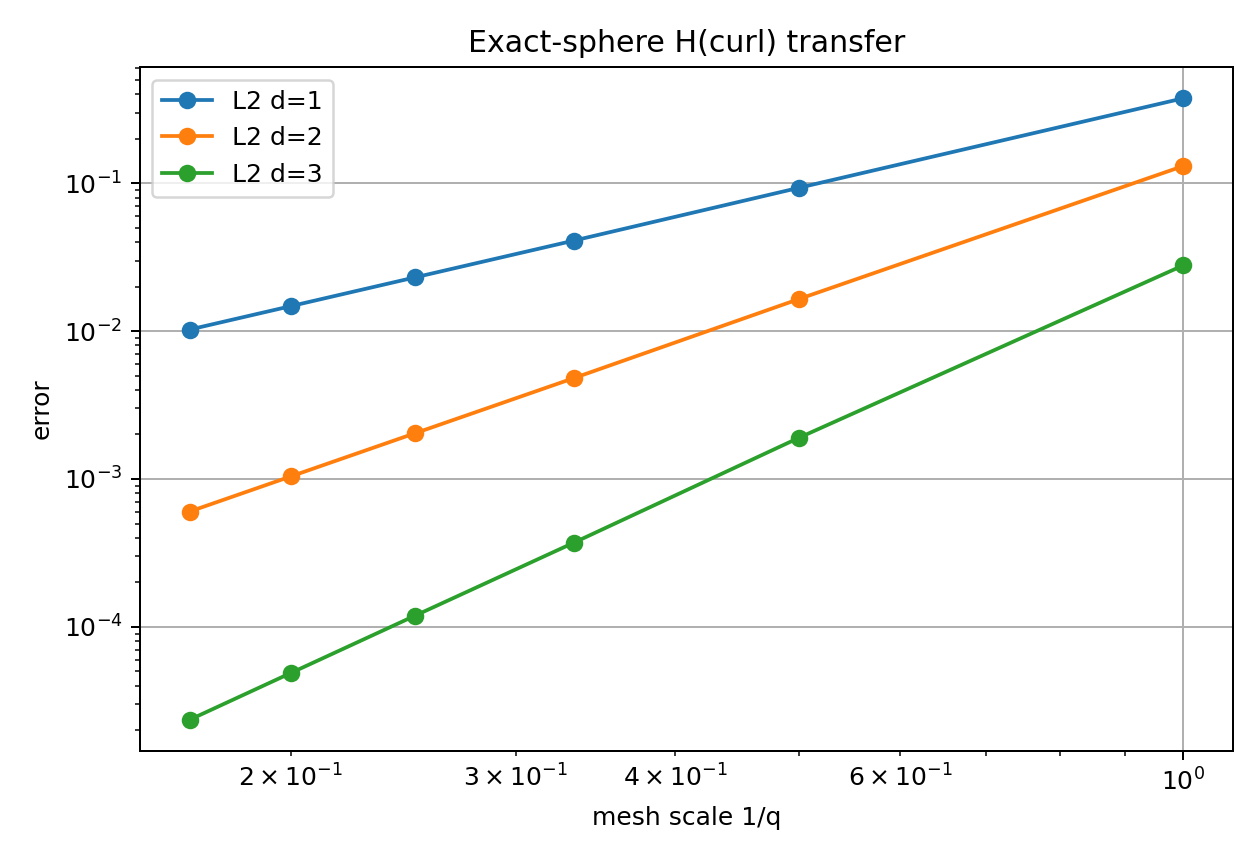}\hfill
\includegraphics[width=.48\textwidth]{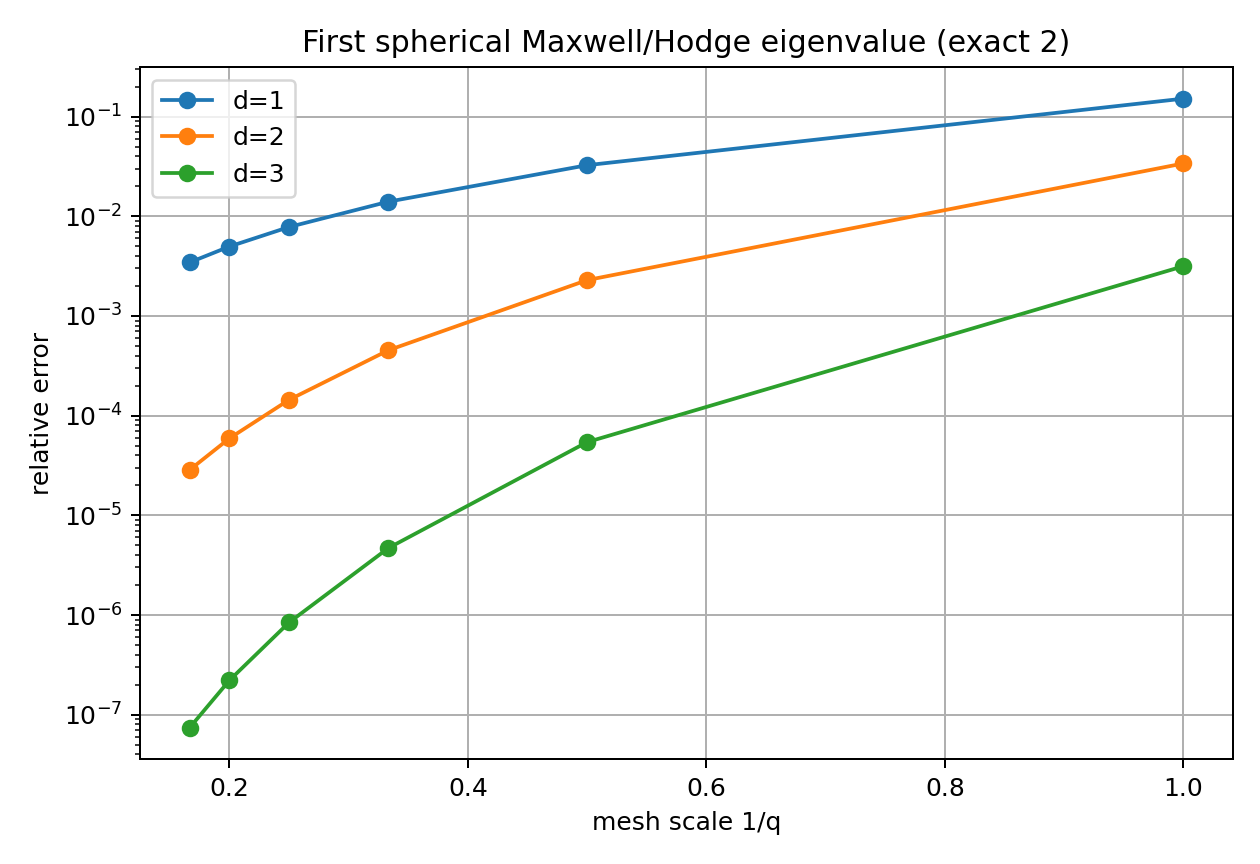}
\caption{Tangentially continuous approximation on the exact sphere. Left: $L^2$ convergence of the source problem. Right: relative error in the first coexact eigenvalue $2$.}
\label{fig:sphere-results}
\end{figure}

\subsection{The embedded hyperboloid}

On the hyperboloid patch the planar field \eqref{eq:manufactured-E} is mapped by the covariant Piola map of $\Phi$, $E_\Gamma=D\Phi\,G_\Phi^{-1}\hat E$ with $G_\Phi=(D\Phi)^TD\Phi$, so that $\curl_\Gamma E_\Gamma=\mathcal J_\Phi^{-1}\widehat\curl\hat E$, and the load is manufactured from these fields. Table~\ref{tab:hyper} and Figure~\ref{fig:hyper-result} show rates two in $L^2$ and one in surface curl at $d=1$, with a constraint residual that grows mildly with the geometric conditioning and is $1.2\times10^{-14}$ on the finest mesh.

\begin{table}[htbp]
\centering\small
\caption{Degree-one $H(\curl_\Gamma)$ source errors on the embedded hyperboloid patch at six mesh resolutions.}
\label{tab:hyper}
\begin{tabular}{crrrrr}
\toprule
mesh&$\|e\|_{L^2}$&rate&$\|\curl_\Gamma e\|_{L^2}$&rate&$\|Cu\|/\|u\|$\\
\midrule
4 & $0.09093$ & -- & $0.5125$ & -- & $4.5\times10^{-16}$ \\
6 & $0.04442$ & 1.77 & $0.4503$ & 0.32 & $5.21\times10^{-16}$ \\
8 & $0.0255$ & 1.93 & $0.3488$ & 0.89 & $1.72\times10^{-15}$ \\
12 & $0.01149$ & 1.97 & $0.2368$ & 0.96 & $1.98\times10^{-15}$ \\
16 & $0.006497$ & 1.98 & $0.1786$ & 0.98 & $3.8\times10^{-15}$ \\
24 & $0.002897$ & 1.99 & $0.1195$ & 0.99 & $1.18\times10^{-14}$ \\
\bottomrule
\end{tabular}
\end{table}

\begin{figure}[htbp]
\centering
\includegraphics[width=.58\textwidth]{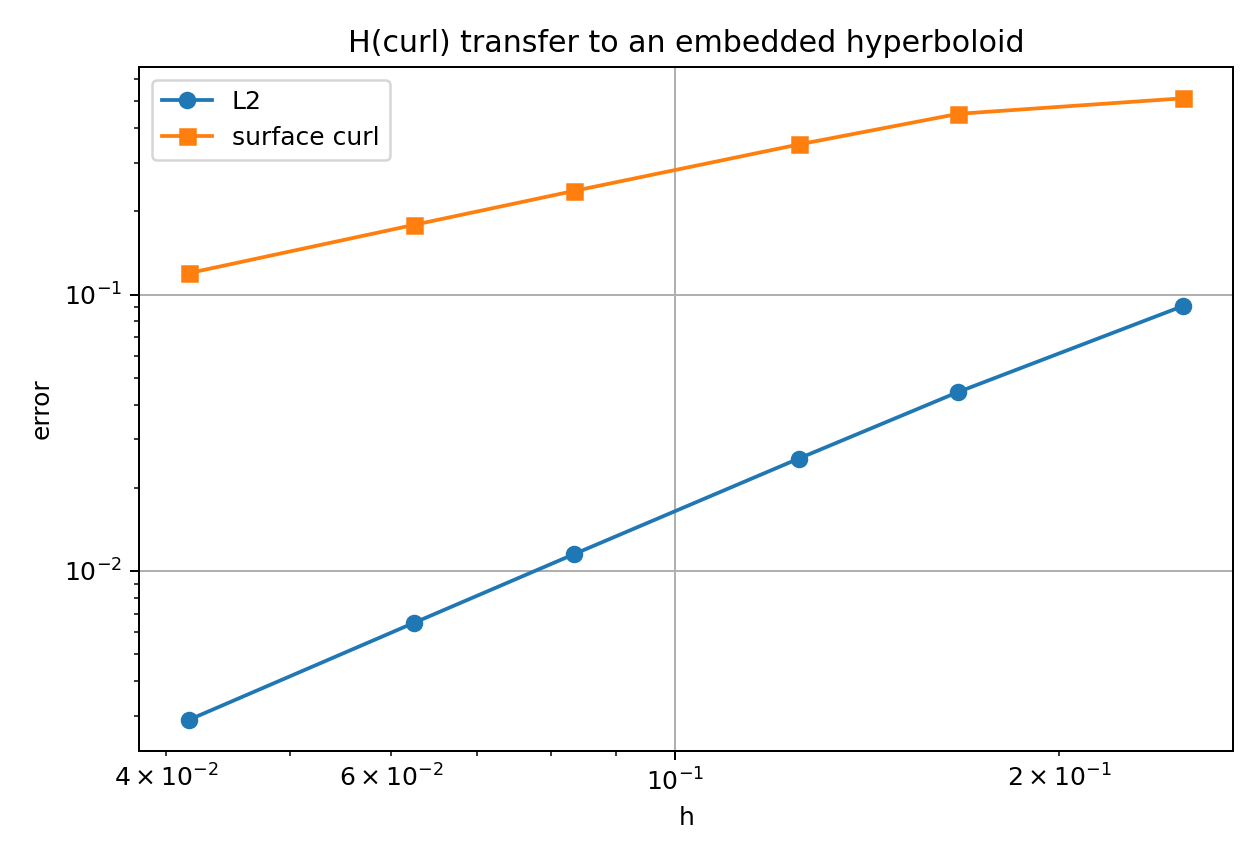}
\caption{Source convergence on the embedded hyperboloid patch at $d=1$. Curvature enters the element matrices only.}
\label{fig:hyper-result}
\end{figure}

\section{Discussion and conclusions}\label{sec:conclusion}

The four Sobolev spaces differ, at the level of implementation, only in the edge functional: coefficients at shared domain points for $H^1$, coefficients of the tangential component for $H(\curl)$, of the normal component for $H(\divv)$, and none for $L^2$. Higher smoothness adds derivative layers to the same edge object. Local assembly, kernel handling and interface preconditioning are therefore shared across scalar and vector problems, which is the situation in coupled problems such as electromagnetic scattering with smooth spline geometry, surface flows on closed surfaces, and plate or shell models that combine $C^1$ scalar unknowns with $H(\divv)$ fluxes, and the smoothness matrices keep $O(N_E)$ rows and nonzeros under refinement for fixed degree. The realizations differ in fill: $C^TC$ couples coefficients sharing a functional, $Z^TAZ$ reflects the support of the determining set, and the bordered system is a sparse saddle matrix. All timings here use direct factorizations and compare realizations; preconditioners uniform in mesh size, degree, smoothness order and curvature are the main open scalability question, and the interface operator $CA^+C^T$ with the coarse block $CR$ of Proposition~\ref{prop:pseudoinverse} is the natural starting point for them.

The row-space criterion is complementary to the theory of smooth exact sequences. A profile can be a subcomplex and still fail exactness through singular vertices or geometry-dependent dimensions, and finite element exterior calculus needs bounded commuting projections beyond the identity $D_{k+1}D_k=0$ \cite{ArnoldFalkWinther2010}. Theorem~\ref{thm:jet-commute} settles the subcomplex question for componentwise profiles in closed form, and the Powell--Sabin experiment shows that once an exact smooth sequence is known, all of its conditions can be stored as edge functionals and solved without its nodal basis. Maxwell eigenvalues are the natural benchmark for this because spurious modes expose defects in the gradient--curl structure that source problems hide; the preserved multiplicities on the square and the sphere, together with the optimal rates, validate the compatible representation.

Everything in this paper is two-dimensional and solved by direct factorization. The extension to tetrahedral meshes, where scalar traces live on faces, tangential continuity uses face and edge traces, normal continuity uses face traces, and Bernstein--B\'ezier bases respecting these operators exist \cite{AinsworthFu2018}, is future work, as is the transfer of $C^r$ smoothness functionals to curved patches. The obstruction on curved patches is that a transverse derivative of $u\circ F_T$ on a reference edge involves the derivative of the geometry map, so the reference functionals \eqref{eq:LS-smoothness} express $C^r$ continuity on $\Gamma_h$ only when the maps of the two triangles sharing an edge join with parametric $C^r$ continuity across it; for maps that join only continuously, the functionals must be replaced by geometry-dependent ones built from the derivatives of $F_T$ along the edge. Exact parameterizations with $C^r$-compatible element maps, such as the radial map of the sphere across edges of the same icosahedral face, are the natural first case. For exactness of general smooth profiles the obstructions are the singular vertices and near-singular configurations that make $\dim S^r_d(\tri)$ geometry dependent for $d<3r+2$, and the row-space test is the diagnostic that separates the algebraic subcomplex property from these questions. The remaining open question is how the interface systems should be preconditioned uniformly in mesh size, degree, smoothness and curvature; the interface operator $CA^+C^T$ with the coarse block $CR$ is the natural object for a block preconditioner.

\begin{appendices}

\section{The constrained shift-invert eigensolver}\label{app:eigs}

For a shift $\sigma$ that is not a finite eigenvalue, $S_\sigma=\mathcal A-\sigma\mathcal M=\begin{bmatrix}K-\sigma M&C^T\\C&0\end{bmatrix}$ is nonsingular. One application of the shift-invert operator to $x=(u,\mu)$ solves $S_\sigma y=\mathcal Mx=(Mu,0)$. If $x$ lies in the finite eigenspace of eigenvalue $\lambda$, then $y=(\lambda-\sigma)^{-1}x$ up to the generalized normalization, and every vector in $\ker\mathcal M=\{0\}\times\R^m$ is mapped to zero. Sparse Arnoldi iteration on $S_\sigma^{-1}\mathcal M$ therefore finds the finite constrained eigenvalues nearest $\sigma$. We take $\sigma\approx\pi^2$ on the square and $\sigma\approx2$ on the sphere. The zero eigenvalue of the discrete gradients is part of the curl--curl kernel and is not selected by these shifts.

\section{Reproducibility}\label{app:repro}

The code archive contains the following scripts.
\begin{itemize}[leftmargin=2em]
\item \texttt{bb\_complex.py}: Bernstein bases, gradient and curl matrices, edge functionals, planar source and eigenvalue solvers, and the compatibility test.
\item \texttt{hdiv\_complex.py}: the normally continuous source problem.
\item \texttt{curved\_complex.py}: surface maps, Piola assembly, source problems, and constrained eigenvalues.
\item \texttt{run\_flat.py}, \texttt{run\_hdiv.py}, \texttt{run\_curved.py}, \texttt{run\_curved\_hdiv.py}: the planar and curved validation data.
\item \texttt{run\_three\_realizations\_two\_studies.py}: the $d=3$ and $C^1$, $d=5$ comparison with the parameter sweeps and complex residuals.
\item \texttt{run\_maxwell\_kernel\_diagnostic.py}: the gradient and curl kernel check of Section~\ref{sec:implementation}.
\item \texttt{smooth\_powell\_sabin.py}: the incenter six-split, the Powell--Sabin functionals, the exactness diagnostics, and the biharmonic solve.
\end{itemize}
No library element and no global conforming basis is used in the solvers; the comparison script constructs the null-space matrix $Z$ by sparse elimination for the three-realization study only.

\end{appendices}

\section*{Declarations}

\bmhead{Funding}
Not applicable. No funding was received for this work.

\bmhead{Conflict of Interest}
The author declares that there is no conflict of interest.

\bmhead{Author Contributions}
Not applicable. The manuscript has a single author, who is responsible for all of the work.

\bmhead{Acknowledgements}
Not applicable.

\bmhead{Data and Code Availability}
The data underlying the tables and figures and the reference implementation are included with the manuscript source and are listed in Appendix~\ref{app:repro}.

\bmhead{Use of Generative AI}
The author used Claude Fable 5.1 (Anthropic) to assist with the organization of the draft, language editing, LaTeX preparation, and as a coding assistant for the reference implementation. The mathematical statements and their proofs, the numerical results, and the final text are the author's responsibility, and the author checked all sources cited.

\end{document}